\newcommand{\hg}{\hat g}
\renewcommand{\P}{\mathbb P}
\newcommand{\Z}{\mathbb Z}
\newcommand{\Q}{\mathbb Q}
\newcommand{\C}{\mathbb C}
\newcommand{\cal}{\mathcal}
\newcommand{\cI}{\mathcal I}
\newcommand{\cO}{\mathcal O}
\newcommand{\cR}{\mathcal R}
\renewcommand{\t}{\widetilde}
\newcommand{\bV}{\overline{V}}
\newcommand{\m}{\mathfrak m}
\DeclareMathOperator{\supp}{Supp}
\DeclareMathOperator{\di}{div}
\DeclareMathOperator{\Di}{Div}
\DeclareMathOperator{\Cl}{Cl}
\DeclareMathOperator{\lcm}{lcm} 
\DeclareMathOperator{\mult}{mult}
\DeclareMathOperator{\emb}{embdim}
\DeclareMathOperator{\spec}{Spec}
\DeclareMathOperator{\cf}{cff}
\renewcommand{\:}{\colon}
\newcommand{\Fl}[1]{\left\lfloor #1 \right\rfloor}
\newcommand{\Ce}[1]{\left\lceil #1 \right\rceil}
\newcommand{\gen}[1]{\langle #1 \rangle}
\newcommand{\defset}[2]{{\left\{\left.#1\,\right| \,#2  \right\}}}
\newcommand{\V}{(V,o)}
\newcommand{\dis}{\displaystyle}
 \theoremstyle{plain}
\newtheorem{thm}{Theorem}[section]
\newtheorem{cor}[thm]{Corollary}
\newtheorem{lem}[thm]{Lemma}
\newtheorem{prop}[thm]{Proposition}
 \theoremstyle{definition}
\newtheorem{defn}[thm]{Definition}
\newtheorem{nota}[thm]{Notation}
\newtheorem{ex}[thm]{Example}
\newtheorem{prob}[thm]{Problem}
\newtheorem*{acknowledgement}{Acknowledgement}
\theoremstyle{remark}
\newtheorem{rem}[thm]{Remark}
\numberwithin{equation}{section}
\newcommand{\thmref}[1]{Theorem~\ref{#1}}
\newcommand{\lemref}[1]{Lemma~\ref{#1}}
\newcommand{\proref}[1]{Proposition~\ref{#1}}
\newcommand{\remref}[1]{Remark~\ref{#1}}
\newcommand{\defref}[1]{Definition~\ref{#1}}
\newcommand{\notref}[1]{Notation~\ref{#1}}
\newcommand{\exref}[1]{Example~\ref{#1}}
\newcommand{\figref}[1]{Figure~\ref{#1}}
\newcommand{\sref}[1]{Section~\ref{#1}}
\begin{document}

\title[Brieskorn complete intersections]{Weighted homogeneous surface singularities homeomorphic to Brieskorn complete intersections}

\author{Tomohiro Okuma}
\address{Department of Mathematical Sciences, 
Yamagata University, 
 Yamagata 990-8560, Japan.}
\email{okuma@sci.kj.yamagata-u.ac.jp}
\thanks{This work was partially supported by JSPS KAKENHI Grant Number 17K05216}
\subjclass[2010]{Primary 32S25; Secondary 14J17, 32S05, 14B05}

\keywords{Surface singularities, weighted homogeneous singularities, Brieskorn complete intersections, geometric genus, maximal ideal cycles}
\begin{abstract}
For a given topological type of a normal surface singularity, there are various types of complex structures which realize it.
We are interested in the following problem:
Find the maximum of the geometric genus and a condition for that the maximal ideal cycle coincides with the fundamental cycle on the minimal good resolution.
In this paper, we study weighted homogeneous surface singularities homeomorphic to Brieskorn complete intersection singularities from the perspective of the problem.
\end{abstract}

\maketitle


\section{Introduction}
The topological type of a normal surface singularity is determined by its resolution graph (\cite{neumann.plumbing}).
For a given resolution graph of a normal surface singularity, there are various types of complex structures which realize it.
We are interested in finding the upper (resp. lower) bound of basic invariants (e.g., the geometric genus), and in understanding the complex structures which attain their maximum (resp. minimum).

Let $\V$ be a normal complex surface singularity with minimal good resolution $X\to V$ and let $\Gamma$ be the resolution graph of $\V$.
As noticed above, the topological invariants of $\V$ are precisely the invariants of $\Gamma$.
In this paper, we consider the geometric genus $p_g\V=\dim H^1(\cO_X)$ and the maximal ideal cycle $M_X$ on $X$. 
In general, these invariants cannot be determined by $\Gamma$ and it is difficult to compute them.
By the definition (\defref{d:cycles}),  the fundamental cycle $Z_X$  on $X$ is determined by $\Gamma$ and the inequality $M_X\ge Z_X$ holds.
The fundamental problem we wish to explore is the following.
\begin{prob}
Let $p_g(\Gamma)$ denote the maximum of the geometric genus over the normal surface singularities with resolution graph $\Gamma$.
\begin{enumerate}
\item Find the value $p_g(\Gamma)$ and conditions for $M_X=Z_X$.
\item Describe the properties and invariants of a singularity $\V$ with $p_g\V=p_g(\Gamma)$ or $M_X=Z_X$.
\end{enumerate}
\end{prob}
It is known that in a complex analytic family of the resolution space $X$ preserving $\Gamma$ (cf. \cite{la.lift}), the dimension of the cohomology of the structure sheaf is upper semicontinuous.
So, we expect the singularities $\V$ with  $p_g\V=p_g(\Gamma)$ may have some kind of nice structure.

The equality $M_X=Z_X$ holds for rational singularities  (\cite{artin.rat}), minimally elliptic singularities (\cite{la.me}), and hypersurfaces $z^n=f(x,y)$ with certain conditions (\cite{dixon}, \cite{tomaru-Kodaira}).
We have an explicit condition for the equality $M_X=Z_X$ for Brieskorn complete intersection singularities (\cite{K-N}, \cite{MO}); the result is generalized to Kummer coverings over weighted homogeneous normal surface singularities in \cite{TT}.
The upper bound of $p_g$ has been also studied by several authors (e.g., \cite{yau.max}, \cite{tomari.ell}, \cite{tomari.max}, \cite{nem.lattice}, \cite{N-Sig}); the ``rational trees'' $\Gamma$ whose $p_g(\Gamma)$ can be obtained from $\Gamma$ are listed in \cite[1.7]{no-2cusp}.
In \exref{e:mpg} of the present paper, we shall introduce the weighted homogeneous singularities of {\em hyperelliptic type} for which $p_g(\Gamma)$ is easily computed.
Since $p_g\V=\dim H^0(\cO_X)/H^0(\cO_X(-Z_{K_X}))$ for numerically Gorenstein singularity, 
where $Z_{K_X}$ is the canonical cycle (\defref{d:cycles}), it might be natural to expect that there is a correlation between the properties $p_g\V=p_g(\Gamma)$ and $M_X=Z_X$.
In fact, when $\V$ is a numerically Gorenstein elliptic singularity (this is characterized by $\Gamma$), we have that $p_g\V=p_g(\Gamma)$ if and only if $\V$ is a Gorenstein singularity with $M_X=Z_X$ (\cite[5.10]{o.numGell}, \cite{yau.max}, \cite{nem.ellip}); in this case, $p_g(\Gamma)$ coincides with the length of the elliptic sequence.
However, in \cite{no-2cusp}, we found an example such that the equality $p_g=p_g(\Gamma)$ is realized by both a Gorenstein singularity with $M_X>Z_X$ and a non-Gorenstein singularity with $M_X=Z_X$.
In \sref{s:exBCI}, we give an example which shows that the condition  $M_X=Z_X$ cannot control $p_g$.

In this paper, we study normal surface singularities homeomorphic to Brieskorn complete intersection singularities from the perspective of our problem above.
First suppose that $V$ is a complete intersection given as follows:
\[
V=\defset{(x_i)\in \C^m}
{q_{i1}x_1^{a_1}+\cdots +q_{im}x_{m}^{a_{m}} =0,
\quad i=3,\dots , m} 
\quad (q_{ij}\in \C).
\]
The resolution graph of the singularity $\V$ is determined by the integers $a_1, \dots, a_m$ (\thmref{t:BCImain}).
We denote it by $\Gamma(a_1, \dots, a_m)$.
Using the Pinkham-Demazure divisor $D$ on the central curve $E_0$ of the exceptional set $E\subset X$, the homogeneous coordinate ring $R$ of $V$ is represented as $R=\bigoplus_{k\ge 0}H^0(\cO_{E_0}(D_k))T^k$ (see \sref{ss:WH}).
We study arithmetic properties of the numerical invariants arising from the topological type in terms of the divisors $D_k$ on $E_0$.
For this purpose, we employ the monomial cycles (cf. \cite{o.pg-splice}) to connect the numerical information of the divisors $D_k$ and the complex analytic functions on $X$; note that monomial cycles play an important role in the study of invariants of splice quotients (\cite{o.pg-splice}, \cite{nem.coh-sq}).
For example, we show that $H^0(\cO_{E_0}(D_k))\ne 0$ if and only if $\deg D_k$ is a member of a certain semigroup, and that $D_k\sim D_{k'}$ if and only if $\deg D_k = \deg D_{k'}$ (see \proref{p:monomials}, \thmref{t:ehg}).
Applying these results, we obtain the following (see \thmref{t:g1p_gmax}, \thmref{t:g=1MZ}).

\begin{thm}\label{t:simple}
If $\V$ is a Brieskorn complete intersection such that the central curve $E_0$ is rational or elliptic curve, then $p_g\V=p_g(\Gamma)$ and $M_X=Z_X$.
\end{thm}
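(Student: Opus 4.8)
The plan is to argue according to the genus $g$ of the central curve $E_0$, treating the rational case $g=0$ and the elliptic case $g=1$ in parallel; the two halves of the statement are then \thmref{t:g1p_gmax} and \thmref{t:g=1MZ}. In both cases I would start from the Pinkham--Demazure presentation $R=\bigoplus_{k\ge 0}H^0(\cO_{E_0}(D_k))T^k$ of \sref{ss:WH} and the resulting formula $p_g\V=\sum_{k\ge 0}\dim H^1(\cO_{E_0}(D_k))$, together with the fact that each degree $\deg D_k$ can be read off from $\Gamma$ and is therefore topological. The key point for the geometric genus is to show that, when $g\le 1$, each summand $\dim H^1(\cO_{E_0}(D_k))$ already attains the largest value permitted by its degree, so that the total equals the topological maximum $p_g(\Gamma)$.

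For $g=0$ this is immediate: on $E_0\cong\P^1$ a line bundle is determined by its degree, so $\dim H^1(\cO_{E_0}(D_k))=\max\{\,0,\,-\deg D_k-1\,\}$ is forced by $\Gamma$ and no analytic structure can do better. For $g=1$ the only degree at which $\dim H^1$ is not pinned down by the degree alone is $0$, where it equals $1$ for the trivial bundle and $0$ otherwise. Here \thmref{t:ehg} is decisive: since $D_0=0$ and $D_k\sim D_{k'}$ whenever $\deg D_k=\deg D_{k'}$, every $D_k$ of degree $0$ satisfies $D_k\sim D_0\sim 0$, hence $\cO_{E_0}(D_k)$ is trivial and $\dim H^1(\cO_{E_0}(D_k))=1$, the maximum. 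Combining this with the upper bound $p_g(\Gamma)\le\sum_{k\ge 0}\mu_k$, where $\mu_k$ denotes the maximum of $\dim H^1(\cO_{E_0}(L))$ over line bundles $L$ on $E_0$ of degree $\deg D_k$, then yields $p_g\V=\sum_{k\ge 0}\dim H^1(\cO_{E_0}(D_k))=\sum_{k\ge 0}\mu_k=p_g(\Gamma)$.

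For the maximal ideal cycle I would use the monomial cycles of \proref{p:monomials}. Since $\m=(x_1,\dots,x_m)$, for any $h\in\m$ the exceptional part of $\di(h)$ dominates $\min_i(x_i)_E$ componentwise, with equality attained, so that $M_X=\min_i(x_i)_E$, each $(x_i)_E$ being the monomial cycle of the coordinate $x_i$. As $M_X\ge Z_X$ always holds, it remains to exhibit, at every vertex of $\Gamma$, a coordinate whose monomial cycle realizes the corresponding coefficient of $Z_X$. The existence of the required sections is precisely the semigroup condition $H^0(\cO_{E_0}(D_k))\ne 0$ of \proref{p:monomials}, while \thmref{t:ehg} ensures that the associated divisors on $E_0$ depend only on the degree and hence move freely enough to avoid the excess vanishing that would force $M_X>Z_X$.

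The case $g=0$ is essentially formal, since $\Pic E_0$ is discrete and every bundle is rigid; the substance is in the elliptic case. The main obstacle there is the $\Pic^0(E_0)$-valued freedom in the classes $[D_k]$: a priori a degree-$0$ bundle $\cO_{E_0}(D_k)$ could be non-trivial, which would both lower the corresponding $\dim H^1$ from $1$ to $0$ and create base points forcing $M_X>Z_X$. What removes this freedom is exactly \thmref{t:ehg}, asserting that for a Brieskorn complete intersection the class $[D_k]$ is a function of $\deg D_k$ alone; this single input simultaneously secures the maximality of $p_g$ and the coincidence $M_X=Z_X$. The one remaining delicate point is the matching upper bound $p_g(\Gamma)\le p_g\V$ over arbitrary analytic structures realizing $\Gamma$, which I would treat through the line-bundle count above together with the semicontinuity of $p_g$ in families preserving $\Gamma$.
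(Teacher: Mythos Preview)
Your treatment of $p_g\V=p_g(\Gamma)$ is essentially the paper's own argument: Pinkham's formula, the observation that $h^1(D_n)$ is determined by $\deg D_n$ whenever $g=0$ or $g=1$ with $\deg D_n\ne 0$, and the fact that $D_n\sim 0$ when $\deg D_n=0$. Two small corrections. First, the linear equivalence $D_n\sim 0$ for $\deg D_n=0$ is \proref{p:monomials}\,(2), not \thmref{t:ehg}; in general the paper only asserts $D_n\sim D_k$ when the common degree lies in the semigroup $\gen{\hat g_1,\dots,\hat g_m}$, which is automatic for degree $0$. Second, the upper bound $p_g(\Gamma)\le \sum_k\mu_k$ does not need semicontinuity: by \thmref{t:TW} the maximum is already attained by a weighted homogeneous singularity with the same graph, and for such a singularity Pinkham's formula gives $p_g=\sum h^1(D'_k)$ with $\deg D'_k=\deg D_k$, so the termwise comparison suffices.

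Your argument for $M_X=Z_X$, however, has a real gap. You correctly note $M_X=\min_i(x_i)_E$, but the explanation that the required sections ``move freely enough to avoid the excess vanishing'' is not the mechanism at work, and as stated it does not prove anything. The cycles $(x_i)_E=Z^{(i)}$ are fixed and equal to $L_{e_i}$ (\thmref{t:BCImain}\,(2)); no linear-equivalence freedom in $|D_k|$ changes them. The paper's argument is purely numerical: by \thmref{t:BCImain}\,(3) one has $M_X=Z_X$ if and only if $e_m\le\alpha$, and this inequality is forced when $g\le 1$ as follows. Since $\deg D_{\alpha}=\deg(\alpha D)>0$ and $g\le 1$, we have $h^0(D_{\alpha})\ne 0$; then \thmref{t:ehg}\,(1) gives $\alpha\in\gen{e_1,\dots,e_m}$, hence $\alpha\ge e_m$. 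Equivalently, $R_{\alpha}\ne 0$ while $R$ is generated in degrees $e_1\ge\cdots\ge e_m$, so a nonzero element of degree $\alpha$ forces $\alpha\ge e_m$. Your proposal never reaches this criterion; once you do, the ``excess vanishing'' issue disappears, because $Z_X=L_{e_m}=Z^{(m)}=M_X$ on the nose.
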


Even if the singularity is not a Brieskorn complete intersection, we can apply a part of the argument on the divisors $D_k$ and prove the following  (\thmref{t:Z=M}).

\begin{thm}\label{t:int-exist}
There exists a weighted homogeneous singularity with resolution graph $\Gamma(a_1, \dots, a_m)$ such that
the maximal ideal cycle coincides with the fundamental cycle on the minimal good resolution.
\end{thm}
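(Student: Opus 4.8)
The plan is to construct, for the given topological data $\Gamma(a_1,\dots,a_m)$, an \emph{explicit} weighted homogeneous singularity realizing it and then to verify $M_X=Z_X$ on the minimal good resolution by a direct comparison of cycles. The natural candidate is the Brieskorn complete intersection itself, or its diagonal/Pinkham--Demazure model: using the description $R=\bigoplus_{k\ge 0}H^0(\cO_{E_0}(D_k))T^k$ from \sref{ss:WH}, the weights $a_1,\dots,a_m$ determine the central curve $E_0$, the Pinkham--Demazure divisor $D$, and hence all the divisors $D_k$. The existence part is then a matter of exhibiting one complex structure on the plumbing whose graded ring is of this form; the Brieskorn complete intersection provides such a structure, and its resolution graph is $\Gamma(a_1,\dots,a_m)$ by \thmref{t:BCImain}. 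So the real content is the equality of cycles, not the existence of \emph{some} analytic realization.

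For the cycle comparison I would proceed as follows. First I would recall that $M_X\ge Z_X$ always holds, so it suffices to produce, for each exceptional component $E_i$, a global function (equivalently, a monomial cycle) whose divisor meets $E_i$ with coefficient exactly equal to the $E_i$-coefficient of $Z_X$. This is precisely where the monomial-cycle machinery of the paper is meant to be used: via \proref{p:monomials} one knows that $H^0(\cO_{E_0}(D_k))\ne 0$ exactly when $\deg D_k$ lies in the relevant semigroup, and a nonzero section of $\cO_{E_0}(D_k)$ lifts to a function on $X$ whose associated cycle is computable from the $D_k$. The key step is to choose the weights/degrees $k$ so that the resulting monomial cycles realize, component by component, the coefficients of the fundamental cycle $Z_X$. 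Because $Z_X$ is determined purely by $\Gamma$ (it is the minimal effective cycle satisfying $Z_X\cdot E_i\le 0$ for all $i$), one can read off these target coefficients from the graph and then match them against the degrees $\deg D_k$ along each arm of the star-shaped graph.

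In carrying this out the arithmetic on the arms is routine: the star-shaped graph attached to a weighted homogeneous singularity has each arm encoding a continued-fraction expansion, and the coefficient of $Z_X$ on a node of an arm is governed by the Seifert invariants $(\alpha_j,\beta_j)$ coming from the weights $a_1,\dots,a_m$. I would verify that the monomial functions supplied by the generators $x_1,\dots,x_m$ of $R$ already generate the maximal ideal in such a way that their orders of vanishing along $E$ bottom out at exactly the $Z_X$-coefficients; this is the computation that turns the abstract ``$H^0\ne 0$'' into the sharp statement $M_X=Z_X$. The central node $E_0$ should be treated separately, since its $Z_X$-coefficient involves $\deg D_k$ at the smallest admissible $k$, and here I would invoke the semigroup criterion directly.

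The main obstacle I anticipate is the central-node coefficient rather than the arms. On the arms the matching is forced by the continued-fraction combinatorics and is essentially automatic once the Seifert data is written down, but on $E_0$ one must show that the \emph{smallest} degree $k$ for which $H^0(\cO_{E_0}(D_k))\ne 0$ produces a monomial cycle whose $E_0$-coefficient is not strictly larger than that of $Z_X$. Equivalently, one must rule out an ``unexpected'' increase of the maximal ideal cycle along the center, and this is exactly where the freedom in choosing the complex structure enters: unlike \thmref{t:simple}, where $E_0$ is forced to be rational or elliptic, here $E_0$ may have higher genus, so one cannot rely on vanishing theorems to force sections into existence. The resolution is to \emph{build} the singularity so that $E_0$ and $D$ are chosen generically (or diagonally, as in the Brieskorn model) to guarantee the semigroup condition holds at the minimal $k$; with that genericity in hand, the coefficient on $E_0$ matches and, combined with the arm computation, yields $M_X=Z_X$.
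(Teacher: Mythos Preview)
Your plan has a genuine gap at precisely the point you flag as the main obstacle. Neither of your two suggested resolutions works. First, the Brieskorn complete intersection itself does \emph{not} always satisfy $M_X=Z_X$: by \thmref{t:BCImain}(3) one has $M_X=Z_X$ for the BCI if and only if $e_m\le \alpha$, and there are tuples (e.g.\ $(2,3,3,4)$ in \sref{s:exBCI}) with $e_m>\alpha$. In that regime $\cf_{E_0}(Z_X)=\alpha<e_m=\cf_{E_0}(M_X)$, and no monomial in $x_1,\dots,x_m$ has degree $\alpha$ since $\alpha<e_m\le e_i$ for all $i$; so your monomial-cycle matching cannot reach the $E_0$-coefficient of $Z_X$. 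Second, a ``generic'' choice of $(E_0,D)$ is the wrong direction: when $\deg D_{\alpha}<g$ (as in $\Gamma(2,3,3,4)$, where $\deg D_2=1<2=g$), a generic degree-$\deg D_{\alpha}$ divisor on a generic curve has $h^0=0$, so genericity \emph{destroys} the section you need rather than producing it.

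What the paper actually does in the case $e_m>\alpha$ is keep the BCI central curve $E_0$ and the branch points $\Delta$, pick any effective divisor $F$ of degree $\deg(\alpha D)$ with $\supp F\cap\supp\Delta=\emptyset$, and then use the divisibility of $\Pic^0(E_0)$ to solve $\alpha\widetilde Q\sim F+\alpha\Delta$ for a new $\widetilde Q$ (\lemref{l:QF}). The resulting $\widetilde D=\widetilde Q-\Delta$ satisfies $D_{\alpha}=\alpha\widetilde D\sim F$, and the support condition on $F$ is exactly what forces the arm coefficients to match $Z_X$ (\lemref{l:M=Z}); merely arranging $h^0(D_{\alpha})\ne 0$ would only give $\cf_{E_0}(M_X)=\cf_{E_0}(Z_X)$, not $M_X=Z_X$ (cf.\ \remref{r:E0coeff}). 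So the missing idea is this \emph{special} (not generic, not BCI) modification of $Q$ that prescribes both the existence of a section and the location of its zeros.
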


We shall describe the property of the Pinkham-Demazure divisor corresponding to the singularity in \thmref{t:int-exist}.

If the central curve $E_0$ has genus $g\ge 2$, we cannot expect a  result similar to \thmref{t:simple}.
In fact, there may be various types of complex structures even when $g=2$. 
To show this, in \sref{s:exBCI}, we fix a resolution graph $\Gamma=\Gamma(2,3,3,4)$, which is the simplest one in a sense, and investigate the singularities having this graph.
Any Brieskorn complete intersection singularity with this graph satisfies neither $p_g\V=p_g(\Gamma)$ nor $M_X=Z_X$.
Assume that $\V$ is a weighted homogeneous surface singularity with resolution graph $\Gamma$.
We prove that $\V$ satisfies $p_g\V=p_g(\Gamma)$ if and only if it is hyperelliptic type, and show that such a singularity is a complete intersection, which is a double cover of a rational double point of type $A_1$.
For the geometric genus, the multiplicity, and the embedding dimension of these singularities, see Table \ref{tab:special}, where the rightmost column indicates the subsections which include the details.
\begin{table}[htb]
\renewcommand{\arraystretch}{1.2}
\[
\begin{array}{ccccl}
\hline\hline
\text{type} & p_g & \mult & \emb &  \text{Section} \\
\hline
\text{Brieskorn CI} & 8 & 6 & 4 &  \text{\sref{ss:BCI2334}} \\
\hline
\text{maximal $p_g$} & 10 & 4 & 4 & \text{\sref{ss:maxpg}}  \\
\hline\hline
\end{array}
\]
\caption{\label{tab:special}
Special types}
\end{table}

Next, in \sref{ss:M=Z}, we give a complete classification of the weighted homogeneous normal surface singularities $\V$ with resolution graph $\Gamma=\Gamma(2,3,3,4)$ such that $M_X=Z_X$. 
We can see the fundamental invariants of those singularities in Table \ref{tab:M=Z}.
For each class, we prove the existence of the singularities by showing the explicit description of the  Pinkham-Demazure divisor (cf. \sref{ss:M=Z}).

\begin{table}[h]
\renewcommand{\arraystretch}{1.2}
\[
\begin{array}{ccccl}
\hline\hline
p_g & \mult & \emb & \text{ring} & \text{Proposition} \\
\hline
8 & 3 & 4 & \ \ \text{non Gorenstein} \ \  & \ref{p:h3=1} \\
 8 & 4 & 4 & \text{non Gorenstein} & \ref{p:D4=2} (1) \\
 7 & 4 & 5 & \text{non Gorenstein} & \ref{p:D4=2} (2) \\
 8 & 5 & 5 & \text{Gorenstein} & \ref{p:011} (1) \\
 7 & 5 & 5 & \text{non Gorenstein} & \ref{p:011} (2) \\
 6 & 6 & 7 & \text{non Gorenstein} & \ref{p:0101} \\
\hline\hline
\end{array}
\]
\caption{\label{tab:M=Z}
Singularities with $M_X=Z_X$}
\end{table}
Note that for any two singularities in Table \ref{tab:M=Z}, they have the same thick-thin decomposition if and only if they have the same multiplicity; see \cite{thick-thin} and the proof of \proref{p:M=Z2334} (2).

This paper is organized as follows.
In \sref{s:Pre}, we review basic facts on weighted homogeneous surface singularities and introduce the singularity of hyperelliptic type.
In \sref{s:BCI}, first we summarize the results on Brieskorn complete intersection surface singularities, and prove \thmref{t:simple} and \thmref{t:int-exist}. 
In \sref{s:exBCI},  we study weighted homogeneous singularities with resolution graph $\Gamma=\Gamma(2,3,3,4)$ such that $p_g=p_g(\Gamma)$ and those with $M_X=Z_X$. 

\begin{acknowledgement}
The author would like to thank the referee for reading the paper carefully and providing several thoughtful comments which helped improving the paper, especially, \lemref{l:Pi} and \proref{p:maxpg3}.
\end{acknowledgement}

\section{Preliminaries}\label{s:Pre}

Let $(V,o)$ be a normal complex surface singularity, namely, the germ of a normal complex surface $V$ at $o\in V$.
We denote by $\m$ the maximal ideal of the local ring $\cO_{V,o}$.
Let $\pi\: X \to V$ denote the minimal good resolution of the singularity $(V,o)$ with exceptional set $E= \pi^{-1}(p)$, 
and let $\{E_i\}_{i\in \cal I}$ denote the set of irreducible components of $E$.
We denote by $\Gamma$ the {\em resolution graph} of $\V$, namely, the weighted dual graph of $E$.
A divisor on $X$ supported in $E$ is called a {\em cycle}.
We denote the group of cycles by $\Z E$.
An element of $\Q E:=\Z E\otimes \Q$ is called a {\em $\Q$-cycle}.
Since the intersection matrix
$(E_i E_j)$ is negative definite, for every $j\in
\cal I$ there exists an effective $\Q$-cycle $E_j^*$ such that $E_j^* E_i=-\delta_{ji}$,
where $\delta_{ji}$ denotes the Kronecker delta.
Let $\Z E^*\subset \Q E$ denote the subgroup generated by $\{E_i^*\}_{i\in I}$. 

For any $\Q$-divisor $F=\sum c_iF_i$ with distinct irreducible components $F_i$, we denote by $\cf_{F_i}(F)$ the coefficient of $F_i$ in $F$, i.e., $\cf_{F_i}(F)=c_i$.
For a function $h\in H^0(\cO_X)\setminus \{0\}$, we denote by $(h)_E\in \Z E$ the {\em exceptional part} of the divisor $\di_X(h)$; 
this means that $\di_X(h)-(h)_E$ is an effective divisor containing no components of $E$. 
We call $\di_X(h)-(h)_E$ the {\em non-exceptional part} of $\di_X(h)$.
We simply write $(h)_E$ instead of $(h\circ \pi)_E$ for $h\in \m\setminus\{0\}$.

A $\Q$-cycle $D$ is said to be {\em nef} (resp. {\em anti-nef}) if $DE_i\ge 0$ (resp. $DE_i\le 0$) for all $i\in \cal I$.
Note that if a cycle $D\ne 0$ is anti-nef, then $D\ge E$.

\begin{defn}\label{d:cycles}
The {\em fundamental cycle} is by definition the smallest non-zero anti-nef cycle and denoted by $Z_X$.
The {\em maximal ideal cycle} on $X$ is the minimum of $\defset{(h)_E}{h \in \m\setminus\{0\}}$ and denoted by $M_X$. 
Clearly, $Z_X\le M_X$.
There exists a $\Q$-cycle $Z_{K_X}$ such that $(K_X+Z_{K_X})E_i=0$ for every $i\in \cI$, where $K_X$ is a canonical divisor on $X$.
We call $Z_{K_X}$ the {\em canonical cycle} on $X$. 
\end{defn}

\subsection{Cyclic quotient singularities}\label{ss:cyc}
Let $n$ and $\mu$ be positive integers with $\mu<n$
and $\gcd(n,\mu)=1$. 
 Let $\epsilon_{n}\in \C$ denote the primitive $n$-th root of unity 
and let $G$ denote the cyclic group $\left\langle \begin{pmatrix}
\epsilon_{n} & 0 \\ 0 & \epsilon_{n}^{\mu}
\end{pmatrix}  \right \rangle \subset GL(2,\C)$.
Suppose that 
$V=\C^2/G$.  Then $\V$ is called the cyclic quotient singularity of type $C_{n,\mu}$.
For integers $c_i\ge 2$, $i=1, \dots, r$, we put 
\[
[[c_{1}, \dots ,c_{r} ]]:=c_{1}-
  \cfrac{1}{c_{2}- \cfrac{1}{\ddots -\cfrac{1} 
{c_{r}}}}
\]
If $n/\mu=[[c_{1}, \dots ,c_{r} ]]$, the resolution graph $\Gamma$
 is a chain as in \figref{fig:HJ}, where all components $E_i$ are rational.

\begin{figure}[htb]
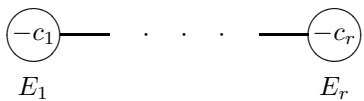

$
\xy
 (15,0)*+{-c_{1}}*\cir<10pt>{}="B"; 
(55,0)*+{-c_{r}} *\cir<10pt>{}="C";
(30,0)*+{\cdot}; 
(35,0)*+{\cdot}; 
(40,0)*+{\cdot};   
"B" *++!D(-2.0){E_1};
"C" *++!D(-2.0){E_r};
 \ar @{-}"B";(25,0)
  \ar @{-} (45,0);  "C"
\endxy
$
\caption{The resolution graph of $C_{n,\mu}$ \label{fig:HJ}}
\end{figure}
It is known that the local class group $\Cl\V$ 
is isomorphic to the finite abelian group
\[
\Z E^* /\Z E =\gen{[E_1^*]}=\gen{[E_r^*]}
\]
of order $n$, where $[E_i^*]=E_i^*+\Z E$ (cf. \cite[II (a)]{mum.top}, \cite[III. 5]{CCS}). 

Suppose that $E_0$ is a prime divisor on $X$ such that $E_0E_i=\delta_{1 i}$ for $1\le i \le r$; so $E_0+E_1+\cdots+E_r$ looks like a chain of curves.
For any positive integer $m_0$, let 
\[
\cal L(m_0 )=\defset{m_0 E_0+\sum_{i=1}^r m_iE_i}{m_1, \dots, m_r\in \Z_{> 0}}.
\]
Then we define a set $\cal D(m_0)$ as follows:
\[
\cal D(m_0 ):=\defset{D\in \cal L(m_0)}{DE_i\le 0, \; i=1,\dots, r}.
\]
It is easy to see that 
$\cal D(m_0 )$ is not empty and has a unique smallest element.

Let $\Ce{x}$ denote the ceiling of a real number $x$.

\begin{lem}\label{l:La}
Let $D\in \cal D(m_0 )$.
We have the following:
\begin{enumerate}
\item There exists an effective cycle $F$ such that $(D+F)E_i= 0$ for $1\le i<r$ and $\supp(F)\subset \bigcup_{i>1}E_i$.
\item If $DE_i= 0$ for $1\le i<r$ and $DE_r\ge -1$, then $D$ is the smallest element of $\cal D(m_0 )$.
\item Assume that $D, D'\in \cal D(m_0 )$ 
and $DE_i=D'E_i$ for $1\le i<r$.
If $D>D'$, then $\cf_{E_1}(D)>\cf_{E_1}(D')$.
\item Assume that $D$ and  $D'$ are the smallest elements of $\cal D(m_0)$ and $\cal D(m_0')$, respectively,
and that $D'E_i=0$ for $1\le i \le r$. 
Then $D+D'$ is the smallest element of $\cal D(m_0+m_0')$.
\end{enumerate}
\end{lem}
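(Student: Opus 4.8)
The plan is to reduce all four parts to the negative-definiteness of the chain $E_1,\dots,E_r$ together with two facts about the dual cycle $E_r^*$: that any $\Q$-cycle orthogonal to $E_1,\dots,E_{r-1}$ is a rational multiple of $E_r^*$, and that every coefficient of $E_r^*$ lies strictly between $0$ and $1$. I would prove the second fact directly from the equations $E_r^*E_i=-\delta_{ir}$: writing $x_i=\cf_{E_i}(E_r^*)$, positivity of the dual cycle gives $x_i>0$, the relations $x_{i+1}=c_ix_i-x_{i-1}$ for $i<r$ (with $x_2=c_1x_1$) force $x_1<x_2<\cdots<x_r$ by induction using $c_i\ge2$, and the final relation $x_{r-1}=c_rx_r-1$ combined with $x_{r-1}<x_r$ gives $(c_r-1)x_r<1$, hence $x_r<1$. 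I will also use repeatedly the standard consequence of negative-definiteness that a nef $\Q$-cycle supported on $E_1,\dots,E_r$ is $\le0$ and an anti-nef one is $\ge0$.

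For (1) I would solve the linear system $(D+F)E_i=0$, $i=1,\dots,r-1$, by the obvious forward recursion: the equation at $E_1$ gives $f_2=-DE_1$, and the equation at $E_i$ gives $f_{i+1}=c_if_i-f_{i-1}-DE_i$ (the $f_1$-term being absent). Integrality is automatic, and effectiveness follows from the stronger assertion $0\le f_2\le f_3\le\cdots\le f_r$, proved by induction: $f_2=-DE_1\ge0$, and if $f_i\ge f_{i-1}\ge0$ then $f_{i+1}=c_if_i-f_{i-1}-DE_i\ge 2f_i-f_{i-1}\ge f_i$, using $c_i\ge2$ and $DE_i\le0$.

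For (3) and (2) the engine is the difference cycle. In (3), $B:=D-D'\ge0$ is nonzero, supported on $E_1,\dots,E_r$, with $BE_i=0$ for $i<r$; it cannot have $BE_r\ge0$ (else $B$ would be nef, hence $\le0$), so $B$ is a positive multiple of $E_r^*$, and $\cf_{E_1}(E_r^*)=x_1>0$ yields $\cf_{E_1}(D)>\cf_{E_1}(D')$. In (2), let $D_0$ be the smallest element and $W:=D-D_0\ge0$, so $WE_i=-D_0E_i\ge0$ for $i<r$ and $WE_r=DE_r-D_0E_r\ge-1$. If $WE_r\ge0$ then $W$ is nef, hence $\le0$, so $W=0$; the only remaining case is $DE_r=-1$, $D_0E_r=0$, $WE_r=-1$, and then $W-E_r^*$ is nef (its pairing with each $E_i$ is $\ge0$, and with $E_r$ is exactly $0$), so $W\le E_r^*$. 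Since $W$ is an integral effective cycle while every coefficient of $E_r^*$ is $<1$, this forces $W=0$, i.e. $D=D_0$. I expect this last step to be the main obstacle: excluding a nonzero effective integral cycle with intersection vector $(\ge0,\dots,\ge0,-1)$ is exactly what the coefficient bound $\cf_{E_i}(E_r^*)<1$ is designed to kill.

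For (4) the key observation is that $D'E_i=0$ for all $i$ makes $C\mapsto C+D'$ preserve every intersection number, so $D+D'\in\cal D(m_0+m_0')$; it then suffices to show that the smallest element $D''$ of $\cal D(m_0+m_0')$ satisfies $D''\ge D+D'$. Set $W:=D''-D'$, so $\cf_{E_0}(W)=m_0>0$ and $WE_i=D''E_i\le0$ for $i\ge1$. First $W\ge0$: its negative part is supported on $E_1,\dots,E_r$ (because $\cf_{E_0}(W)>0$), and the usual pairing argument on the negative-definite chain forces that part to vanish. Then, since $W\ge0$ has positive coefficient at $E_0$ and the graph is connected, the standard propagation argument (a zero coefficient adjacent to a positive one would make some $WE_i>0$) gives $\cf_{E_i}(W)\ge1$ for all $i\ge1$; hence $W\in\cal L(m_0)$ and so $W\in\cal D(m_0)$, whence minimality of $D$ gives $W\ge D$, i.e. $D''\ge D+D'$, and equality follows. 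It is in these vanishing-and-propagation steps, as in (2), that negative-definiteness and the chain structure do the real work.
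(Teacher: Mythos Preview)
Your proof is correct, and in several places takes a genuinely different route from the paper.

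For (1), both arguments are constructive and essentially the same in spirit; you solve the linear system by a single forward recursion, while the paper adds, one index at a time, an explicit ``tail cycle'' $F'$ supported on $E_{k+1}+\cdots+E_r$.

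For (3), the paper argues contrapositively and combinatorially: if $\cf_{E_1}(D)=\cf_{E_1}(D')$, take the first index $k$ where the coefficients differ and read off a contradiction from $(D-D')E_k$. You instead identify $D-D'$ as a positive rational multiple of $E_r^*$ via the one-dimensionality of the orthogonal complement of $E_1,\dots,E_{r-1}$. Both are short; the paper's is slightly more elementary, yours makes the lattice structure explicit.

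The real divergence is in (2) and (4). The paper outsources (2) to Laufer \cite{la.TanCon} and proves (4) via the continued-fraction characterization $m_i=\lceil m_{i-1}/d_i\rceil$ of the minimal element from \cite{K-N}, using that $D'E_i=0$ for all $i$ makes the $m_i'$ exact quotients so that the ceiling is additive. Your arguments are entirely self-contained: for (2) you bound the difference $W=D-D_0$ by $E_r^*$ and kill it with the coefficient estimate $\cf_{E_i}(E_r^*)<1$, and for (4) you show directly that $D''-D'\in\cal D(m_0)$ by a negative-part vanishing plus propagation argument. The advantage of your approach is that it avoids any external citations and any use of the Hirzebruch--Jung recursion; the advantage of the paper's is that once the formula $m_i=\lceil m_{i-1}/d_i\rceil$ is in hand, (4) becomes a one-line arithmetic identity. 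Your key technical input---that every coefficient of $E_r^*$ lies in $(0,1)$---is correctly derived from $x_{r-1}<x_r$ and $x_{r-1}=c_rx_r-1$.
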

\begin{proof}
We write as  $D=\sum_{i=0}^r m_iE_i$ and 
$D'=\sum_{i=0}^r m_i'E_i$.

(1) For any $1\le k < r$, there exists a cycle $F'$ supported on $E_{k+1}+\cdots+E_r$ such that 
\[
\cf_{E_{k+1}}(F')=1,\ \ F'E_{k+1}=\cdots=F'E_{r-1}=0, \quad F'E_r < 0
\]
(cf. \cite[III.5]{CCS}).
If $a:=DE_k<0$, then $D+aF'\in \cal D(m_0)$ and  $(D+aF')E_{k}=0$. By repeating this process, 
we obtain the assertion.

(2) It follows from \cite[Lemma 2.2]{la.TanCon} (cf. \cite[2.1]{MO}).

(3) If $m_1=m_1'$, we can take $1\le k<r$ so that $m_i=m_i'$ for $i\le k$ and $m_{k+1}>m_{k+1}'$. Then $(D-D')E_k=m_{k+1}-m_{k+1}'>0$; it contradicts that $DE_k=D'E_k$.

(4)  Let $d_i=[[c_i, \dots, c_r]]$. 
By \cite[Lemma 1.1]{K-N}, the minimality of $D$ is characterized by the condition that $m_i=\Ce{m_{i-1}/d_i}$ for $1\le i \le r$.
By the assumption, it follows from Lemma 1.2 (1) and (2) of \cite{K-N} that 
$m_i'=m_{i-1}'/d_i$.
Hence we have $m_i+m_i'=\Ce{m_{i-1}/d_i}+m_{i-1}'/d_i=\Ce{(m_{i-1}+m_{i-1}')/d_i}$. 
\end{proof}

\subsection{Weighted homogeneous surface singularities}
\label{ss:WH}
Let us recall some fundamental facts on weighted homogeneous surface singularities (cf. \cite{p.qh}).

Assume that $\V$ is a weighted homogeneous singularity.
Then the resolution graph $\Gamma$ of $\V$ is a star-shaped graph as in \figref{fig:star}, where $E_{i,j}$ are rational curves, $g$ is the genus of the curve $E_0$, $c_{i,j}$ and $c_{0}$ are the self-intersection numbers.
The component $E_0$ is called the {\em central curve}.

\begin{figure}[htb]
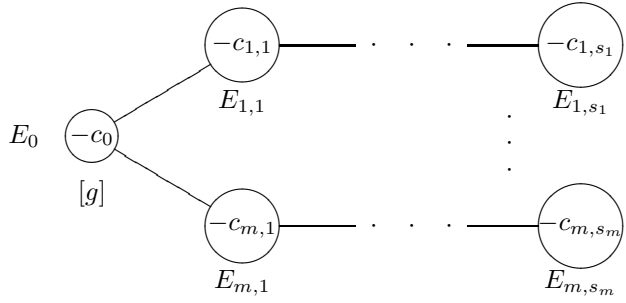

 \begin{center}
$
\xy
(-9,0)*+{E_0}; (0,0)*+{-c_0}*\cir<10pt>{}="E"*++!D(-2.0){[g]}; 
(20,12)*+{-c_{1,1} }*\cir<14pt>{}="B_1"*++!D(-2.0){E_{1,1}}; 
(65,12)*+{-c_{1,s_1} }*\cir<16pt>{}="B_3"*++!D(-2.0){E_{1,s_1}}; 
(20,-12)*+{-c_{m,1} }*\cir<14pt>{}="D_1"*++!D(-2.0){E_{m,1}}; 
(65,-12)*+{-c_{m,s_{m}} }*\cir<16pt>{}="D_3"*++!D(-2.0){E_{m,s_{m}}};
(37.5,12)*{\cdot },(42.5,12)*{\cdot },(47.5,12)*{\cdot }, 
(37.5,-12)*{\cdot},(42.5,-12)*{\cdot},(47.5,-12)*{\cdot}, 
(55.5,2.5)*{\cdot},(55.5,-1)*{\cdot},(55.5,-4.5)*{\cdot}, 

\ar @{-} "E" ;"B_1" 
\ar @{-} "E" ;"D_1"  

\ar @{-}"B_1";(35,12) \ar @{-} (50,12);"B_3" 
\ar @{-} "D_1";(35,-12) \ar @{-} (50,-12);"D_3" 
\endxy
$
\end{center}
\caption{\label{fig:star} A star-shaped resolution graph}
\end{figure}
For $1\le i \le m$, we define positive integers $\alpha_i$ and $\beta_i$ with $\gcd(\alpha_i, \beta_i)=1$ by $\alpha_i/\beta_i=[[c_{i,1}, \dots ,c_{i,s_i} ]]$. 
The data 
\[
(g, c_0, (\alpha_1, \beta_1), \dots, (\alpha_m,\beta_m))
\]
 is called the {\em Seifert invariant}.
Note that the graph $\Gamma$ can be recovered from the Seifert invariant.

Let $P_i\in E_0$ denote the point $E_0\cap E_{i,1}$ and $Q$ a divisor on $E_0$ such that $\cO_{E_0}(-E_0)\cong \cO_{E_0}(Q)$. 
We define a $\Q$-divisor $D$ and divisors $D_k$ ($k\in \Z_{\ge 0}$) on $E_0$ as follows:
\[
D:=Q-\sum_{i=1}^m\frac{\beta_i}{\alpha_i}P_i, \qquad
D_k:=kQ-\sum_{i=1}^m \Ce{\frac{k\beta_i}{\alpha_i}} P_i.
\]
We call $D$ the {\em Pinkham-Demazure divisor}.
It is known that $\deg D >0$.
For any divisor $F$ on $E_0$, we write as 
\[
H^i(F)=H^i(\cO_{E_0}(F)), \quad h^i(F)=\dim_{\C}H^i(F).
\]

Let $R:=R\V$ denote the homogeneous coordinate ring of the singularity $(V,o)$.
Then we have the expression  $R=\bigoplus_{k\ge 0}H^0(D_k)T^k\subset \C(E_0)[T]$, where $\C(E_0)$ is the field of rational functions on $E_0$ and $T$ an indeterminate (cf. \cite{p.qh}, \cite{tki-w}).
We have the following.

\begin{thm}[Pinkham \cite{p.qh}]
\label{t:Pin}
$p_g\V=\sum_{k\ge 0}h^1(D_k)$.
\end{thm}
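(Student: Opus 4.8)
The plan is to compute $H^1(\cO_X)$ by exploiting the $\C^*$-action and reducing everything to a line-bundle computation over the central curve $E_0$. Since $p_g\V=\dim H^1(\cO_X)$ and the minimal good resolution is canonical (hence equivariant), the $\C^*$-action on $V$ lifts to $X$; therefore $H^1(\cO_X)$ carries a natural $\Z$-grading, and it suffices to identify each graded piece with $H^1(D_k)$ and then sum.

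First I would contract the arms. Let $\psi\: X\to X'$ collapse each chain $E_{i,1}+\cdots+E_{i,s_i}$, so that $X'$ has $E_0$ as its only exceptional curve, carrying $m$ cyclic quotient singular points located at the $P_i$. Each arm is a chain of rational curves, i.e. the exceptional set of the resolution of a cyclic quotient singularity of type $C_{\alpha_i,\beta_i}$, and such singularities are rational; hence $R^1\psi_*\cO_X=0$ and $\psi_*\cO_X=\cO_{X'}$. By the Leray spectral sequence $H^1(\cO_X)\cong H^1(\cO_{X'})$, so the entire computation can be performed on $X'$.

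The decisive step is then to use that a neighborhood of $E_0$ in $X'$ is the total space of the orbifold line bundle attached to $\cO_{E_0}(-E_0)$, with $E_0$ as the zero section and the $\C^*$-action scaling the fibers. Writing $p\: X'\to E_0$ for the bundle projection, I would show by a local analysis at each orbifold point $P_i$ (where the fiber structure is the cyclic quotient prescribed by $(\alpha_i,\beta_i)$) that the weight decomposition of the pushforward is
\[
p_*\cO_{X'}=\bigoplus_{k\ge 0}\cO_{E_0}(D_k),
\]
the rounding $\Ce{k\beta_i/\alpha_i}$ being exactly what the local quotient structure at $P_i$ contributes, and only non-negative weights occurring because $\deg D>0$ forces regular functions to have non-negative fiber degree. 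Since the fibers of $p$ are affine, $R^1p_*\cO_{X'}=0$, and Leray applied to $p$ gives $H^1(\cO_{X'})=\bigoplus_{k\ge 0}H^1(D_k)$. Passing to dimensions yields $p_g\V=\sum_{k\ge 0}h^1(D_k)$.

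The main obstacle is this middle step: the precise identification of $p_*\cO_{X'}$ with $\bigoplus_{k\ge 0}\cO_{E_0}(D_k)$ together with the vanishing $R^1p_*\cO_{X'}=0$, carried out in the presence of the quotient singularities at the $P_i$. Everything hinges on translating the Seifert data $(\alpha_i,\beta_i)$ into the correct fractional twist $-\beta_i/\alpha_i$ at $P_i$ and verifying that the integral contribution $\Ce{k\beta_i/\alpha_i}$ genuinely emerges from the invariants of the local cyclic action; this is the Dolgachev--Pinkham--Demazure description of quasi-homogeneous coordinate rings, and making it rigorous at the orbifold points is the technical heart of the argument. By contrast, the reductions on either side—rationality of the arms and affineness of the fibers of $p$—are comparatively routine.
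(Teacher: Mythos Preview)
The paper does not supply its own proof of this theorem; it is stated with attribution to Pinkham \cite{p.qh} and used as a black box.  So there is no in-paper argument to compare against, and the question is only whether your outline is sound.

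Your plan is essentially the standard Dolgachev--Pinkham--Demazure argument and is correct in structure: contract the rational arms (losing nothing since cyclic quotients are rational), recognise a neighbourhood of $E_0$ in $X'$ as the orbifold line bundle associated to the Seifert data, and push forward along the affine projection $p\:X'\to E_0$ to obtain $p_*\cO_{X'}\cong\bigoplus_{k\ge 0}\cO_{E_0}(D_k)$ with $R^1p_*\cO_{X'}=0$.  You have also correctly identified the only genuinely delicate point, namely the local computation at each $P_i$ that produces the ceiling $\Ce{k\beta_i/\alpha_i}$.

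One small correction: your reason for ``only non-negative weights occurring'' is misplaced.  The restriction $k\ge 0$ has nothing to do with $\deg D>0$; it comes simply from regularity along the zero section $E_0$ (a function on the total space that is holomorphic across $E_0$ has a fibrewise Taylor expansion with non-negative exponents).  The inequality $\deg D>0$ enters elsewhere: it guarantees that $\deg D_k\to\infty$, hence $h^1(D_k)=0$ for $k\gg 0$, so that the sum $\sum_{k\ge 0}h^1(D_k)$ is finite.  With that adjustment your outline is a faithful sketch of Pinkham's proof.
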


Let $H(V,t)$ denote the Hilbert series of the graded ring $R$, i.e., $H(V,t)=\sum_{k\ge 0}h^0(D_k)t^k$. 

\begin{prop}\label{p:Hpg}
We have the following.
\begin{enumerate}
\item If we write as $H(V,t)=p(t)/q(t)+r(t)$, where $p,\, q, \, r\in\C[t]$ and $\deg p<\deg q$, then $p_g\V=r(1)$. 
\item Let $(V_1, o_1)$ and $(V_2,o_2)$ be weighted homogeneous singularities with the same resolution graph. Then $p_g(V_1,o_1)-p_g(V_2,o_2)=(H(V_1,t)-H(V_2,t))|_{t=1}$.
\end{enumerate}
\end{prop}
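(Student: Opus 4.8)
The plan is to split the Hilbert series into a purely topological part, governed by the degrees $\deg D_k$, and a finite ``geometric'' part, governed by the $h^1(D_k)$, using Riemann--Roch on the central curve $E_0$. Since $E_0$ has genus $g$, Riemann--Roch gives $h^0(D_k)-h^1(D_k)=\deg D_k+1-g$ for every $k\ge 0$, so setting $\phi(k):=\deg D_k+1-g$ we obtain
\[
H(V,t)=\sum_{k\ge 0}h^0(D_k)\,t^k=\Phi(t)+P(t),
\]
where $\Phi(t):=\sum_{k\ge 0}\phi(k)\,t^k$ and $P(t):=\sum_{k\ge 0}h^1(D_k)\,t^k$. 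Because $\deg D=\deg Q-\sum_i\beta_i/\alpha_i>0$, the degree $\deg D_k$ grows linearly in $k$, so $h^1(D_k)=0$ for $k\gg 0$; hence $P$ is a polynomial and, by \thmref{t:Pin}, $P(1)=\sum_{k\ge 0}h^1(D_k)=p_g\V$.

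Next I would analyze $\Phi$. From $\deg D_k=k\deg Q-\sum_i\Ce{k\beta_i/\alpha_i}$ and the fact that $\Ce{k\beta_i/\alpha_i}$ equals $k\beta_i/\alpha_i$ plus a function of $k\bmod\alpha_i$, the function $\phi$ is a quasi-polynomial of degree one, valid uniformly for \emph{all} $k\ge 0$. The key lemma I would establish is that for any quasi-polynomial $\phi$ defined for all $k\ge 0$, the generating function $\Phi(t)=\sum_{k\ge 0}\phi(k)\,t^k$ is a \emph{strictly proper} rational function. This is checked by summing over residue classes modulo the period $N=\lcm_i\alpha_i$: on each class the contribution has the shape $\sum_{j\ge 0}(aj+b)\,t^{jN+r}$ with $0\le r<N$, which is a combination of $t^N/(1-t^N)^2$ and $1/(1-t^N)$ times $t^r$, of numerator degree at most $2N-1<2N$. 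Thus $\Phi$ has vanishing polynomial part.

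This yields part (1) at once. The decomposition of a rational function as (strictly proper rational) $+$ (polynomial) is unique, because a polynomial that vanishes at infinity must be $0$. Writing $H(V,t)=p(t)/q(t)+r(t)$ with $\deg p<\deg q$ (so $p/q$ is strictly proper) therefore forces $r=P$ and $p/q=\Phi$, whence $r(1)=P(1)=p_g\V$. For part (2), I would observe that the Seifert invariant $(g,c_0,(\alpha_1,\beta_1),\dots,(\alpha_m,\beta_m))$, and hence $\deg Q=c_0$ together with the entire quasi-polynomial $\phi$, is determined by the resolution graph alone; so two singularities with the same graph share the same $\Phi$. Subtracting, $H(V_1,t)-H(V_2,t)=P_1(t)-P_2(t)$ is a polynomial, regular at $t=1$, and there it equals $p_g(V_1,o_1)-p_g(V_2,o_2)$.

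The only real obstacle is the properness lemma for $\Phi$: one must check that $\phi$ is a genuine quasi-polynomial for \emph{every} $k\ge 0$, including $k=0$, since a defect at a single index would inject a spurious constant into the polynomial part and break the identification $r=P$. The residue-class computation above handles this uniformly, and the remainder of the argument is routine bookkeeping with Riemann--Roch and \thmref{t:Pin}.
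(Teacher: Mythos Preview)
Your argument is correct. For part~(2) it is essentially identical to the paper's proof: the paper also invokes \thmref{t:Pin} together with Riemann--Roch $h^0(D_n)-h^1(D_n)=\deg D_n+1-g$, noting that the right-hand side depends only on $\Gamma$.

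For part~(1) the paper gives no argument at all and simply cites \cite[3.1.3]{no-edwh}. Your route via the decomposition $H(V,t)=\Phi(t)+P(t)$ with $\Phi$ strictly proper rational (from the quasi-polynomiality of $\phi(k)=\deg D_k+1-g$) and $P$ polynomial (from the vanishing of $h^1(D_k)$ for large~$k$) is a self-contained proof that presumably unpacks what the cited reference does. The residue-class computation showing that each summand $\sum_{j\ge 0}(aj+b)t^{jN+r}$ has numerator degree at most $2N-1$ over $(1-t^N)^2$ is clean and handles the boundary case $k=0$ uniformly, as you note. The uniqueness of the strictly-proper-plus-polynomial decomposition then pins down $r=P$. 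So your approach buys a genuinely independent verification of~(1), whereas the paper's buys brevity.
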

\begin{proof}
(1) follows from  \cite[3.1.3]{no-edwh}.

(2) 
It follows from \thmref{t:Pin} and the Riemann-Roch theorem $h^0(D_n)-h^1(D_n)= \deg D_n+1-g$ (the right-hand side is determined by $\Gamma$).
\end{proof}

The next theorem follows from \cite[2.9]{KeiWat-D}.

\begin{thm}\label{t:WatD}
Let $D'=\sum  ((\alpha_i-1)/\alpha_i)P_i$.
Then $R$ is Gorenstein if and only if there exists an integer $a$ such that $K_C\sim aD-D'$;  the integer $a$ coincides with the $a$-invariant $a(R)$ of Goto--Watanabe  (\cite{G-W}).
\end{thm}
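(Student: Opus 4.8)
The plan is to realize the Gorenstein condition through the graded canonical module of $R$ and then translate it into a linear-equivalence condition on the central curve $C=E_0$ via Serre duality. First I would record that $R$ is a two-dimensional normal graded ring, hence Cohen--Macaulay, so it admits a graded canonical module $\omega_R$, and that such an $R$ is Gorenstein if and only if $\omega_R$ is free of rank one over $R$, i.e. $\omega_R\cong R(a)$ as graded $R$-modules for some $a\in\Z$; in that case $a$ is by definition the $a$-invariant $a(R)$. Since $V=\spec R$ with $o$ the vertex, ``$R$ Gorenstein'' is literally ``$\V$ Gorenstein'', so this reduction loses nothing.

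Next I would compute the graded pieces of $\omega_R$. Using the identification of the local cohomology of the affine cone with sheaf cohomology on $E_0$, namely $(H^2_{\m}(R))_n\cong H^1(\cO_{E_0}(D_n))$ for all $n\in\Z$ (where $D_n=\Fl{nD}=nQ-\sum_i\Ce{n\beta_i/\alpha_i}P_i$), graded local duality gives $(\omega_R)_m\cong (H^2_{\m}(R))_{-m}^{\vee}$, and Serre duality on the curve $E_0$ then yields
\[
(\omega_R)_m\;\cong\;H^0\bigl(\cO_{E_0}(K_C-D_{-m})\bigr)\;=\;H^0\bigl(\cO_{E_0}(K_C+\Ce{mD})\bigr).
\]
The criterion $\omega_R\cong R(a)$ now reads, degree by degree, $H^0(\cO_{E_0}(K_C+\Ce{mD}))\cong H^0(\cO_{E_0}(D_{m+a}))$ for every $m$, compatibly with the $R$-module structure. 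I would prove that this holds precisely when $K_C\sim aD-D'$. The arithmetic core is an elementary rounding identity: writing $D=Q-\sum_i(\beta_i/\alpha_i)P_i$ and $D'=\sum_i((\alpha_i-1)/\alpha_i)P_i$, one checks that the $\Q$-divisor $aD-D'$ is integral if and only if $a\beta_i\equiv 1\pmod{\alpha_i}$ for every $i$, and that under this integrality one has the exact equality of divisors
\[
\Fl{(m+a)D}-\Ce{mD}=aD-D'\qquad\text{for all }m\in\Z.
\]
Thus $\Fl{(m+a)D}$ and $\Ce{mD}$ differ by the single, $m$-independent divisor $aD-D'$. If $K_C\sim aD-D'$, multiplication by a fixed rational function realizing this linear equivalence induces the required degree-shifting isomorphism $\omega_R\cong R(a)$; conversely, $\omega_R\cong R(a)$ forces $K_C+\Ce{mD}\sim D_{m+a}$ for all $m$, and taking $m=0$ with the identity above gives $K_C\sim aD-D'$. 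Since $K_C$ is an honest divisor, the equivalence $K_C\sim aD-D'$ automatically forces the integrality $a\beta_i\equiv1\pmod{\alpha_i}$, so no case is lost; and the integer $a$ appearing here is the shift in $\omega_R\cong R(a)$, which is exactly $a(R)$.

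The main obstacle will be the two structural inputs rather than the arithmetic. The first is establishing the canonical-module formula with the correct normalization: identifying $(H^2_{\m}(R))_n$ with $H^1(\cO_{E_0}(D_n))$ for all $n$ (including $n<0$) requires analysing the cohomology of the punctured cone $\spec R\setminus\{o\}$ compatibly with the $\C^*$-grading and keeping precise track of $K_C$, of the floor/ceiling conventions, and of the auxiliary divisor $Q$ (itself only a class). The second, subtler point is upgrading the degreewise isomorphisms $(\omega_R)_m\cong R_{m+a}$ to a genuine isomorphism of graded $R$-modules: this is exactly where the uniformity of the identity $\Fl{(m+a)D}-\Ce{mD}=aD-D'$ in $m$ is essential, since it supplies one rational section intertwining all the multiplication maps at once. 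Once these are in place the result follows, and indeed it is precisely \cite[2.9]{KeiWat-D} specialized to the central curve $E_0$ and the Pinkham--Demazure divisor $D$.
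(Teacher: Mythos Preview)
The paper does not prove this theorem at all: it is stated with the sentence ``The next theorem follows from \cite[2.9]{KeiWat-D}'' and nothing more. Your proposal is therefore not a comparison target against the paper's proof but rather a reconstruction of Watanabe's original argument, and as such it is essentially correct: the graded canonical module of a two-dimensional normal graded ring $R=R(C,D)$ is computed via graded local duality and Serre duality on $C$, and the Gorenstein condition $\omega_R\cong R(a)$ translates into the linear-equivalence $K_C\sim aD-D'$ through exactly the rounding identity you wrote down.

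One small logical wrinkle in your converse direction deserves attention. You deduce from $\omega_R\cong R(a)$ that $K_C+\Ce{mD}\sim D_{m+a}$ for all $m$, and then say ``taking $m=0$ with the identity above gives $K_C\sim aD-D'$''. But the identity $\Fl{(m+a)D}-\Ce{mD}=aD-D'$ was established \emph{assuming} the integrality of $aD-D'$, so invoking it at this point is circular. The clean fix is either to argue directly that $\omega_R$ is isomorphic as a graded $R$-module to $\bigoplus_n H^0(\cO_C(K_C+D'+\Fl{nD}))T^n$ (this is how Watanabe actually sets it up, via Demazure's description of the canonical sheaf on the affine cone), after which $\omega_R\cong R(a)$ immediately gives $K_C+D'\sim aD$; or to use the relations $K_C+\Ce{mD}\sim \Fl{(m+a)D}$ for two well-chosen values of $m$ (say $m=0$ and $m=-a$) to force the congruences $a\beta_i\equiv 1\pmod{\alpha_i}$ first, and only then apply the rounding identity. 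Either route closes the gap with no new ideas.
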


\subsection{Surface singularities with star-shaped graph}
\label{ss:star}

First, we briefly review some important facts in \cite[\S 6]{tki-w}.
Assume that $\V$ is a normal surface singularity with star-shaped resolution graph $\Gamma$ as  \figref{fig:star}.
Even if $\V$ is not weighted homogeneous, in the same manner as in \sref{ss:WH}, we obtain the Pinkham-Demazure divisor
\[
D=Q-\sum_{i=1}^m\frac{\beta_i}{\alpha_i}P_i
\]
on the central curve $E_0\subset E$ on the minimal good resolution $X$.
We define the graded ring $R$ by
\[
R=R(E_0, D):=\bigoplus_{k\ge 0}H^0(D_k)T^k\subset \C(E_0)[T].
\]
Let $\bV=\spec R$ and $o\in \bV$ the point defined by the maximal ideal $\bigoplus_{k\ge 1}H^0(D_k)T^k$.
Then $(\bV,o)$ is a weighted homogeneous normal surface singularity with resolution graph $\Gamma$.

\begin{thm}[Tomari-Watanabe {\cite[\S 6]{tki-w}}]\label{t:TW}
For every $n\in\Z_{\ge 0}$, there exists the minimal cycle $L_n\in \Z E$ such that $L_n$ is anti-nef on $E-E_0$ and $\cf_{E_0}(L_n)=n$.\footnote{Our symbol $L_n$ is equal to $-L_{-n}$ in \cite[\S 6]{tki-w}.}
Then we have a natural isomorphism $\cO_{E_0}(-L_n)\cong \cO_{E_0}(D_n)$ for $n\in\Z_{\ge 0}$; in fact, 
\[
\sum_{i=1}^m \Ce{\frac{k\beta_i}{\alpha_i}} P_i=(L_n-nE_0)|_{E_0}.
\]
In general, we have $p_g\V \le p_g(\bV,o)$.
If the equality $p_g\V =p_g(\bV,o)$ holds, the following sequence is exact for $n\ge 0$:
\[
0\to H^0(\cO_X(-L_n-E_0)) \to H^0(\cO_X(-L_n)) \to H^0(\cO_{E_0}(D_n)) \to 0.
\]
\end{thm}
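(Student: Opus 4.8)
The plan is to establish the three assertions in turn, reducing each to the cyclic quotient analysis of \sref{ss:cyc} together with Pinkham's formula \thmref{t:Pin}. First I would construct $L_n$ arm by arm. For each $i$, the chain $E_0,E_{i,1},\dots,E_{i,s_i}$ is precisely the configuration of \sref{ss:cyc} with $E_0$ playing the role of the distinguished divisor and $\alpha_i/\beta_i=[[c_{i,1},\dots,c_{i,s_i}]]$; fixing the coefficient of $E_0$ to be $n$, the minimal cycle on this chain that is anti-nef on the arm is the unique smallest element of $\cal D(n)$. Since distinct arms meet only along $E_0$, these minimal arm-cycles glue to the desired $L_n$. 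The coefficient of $E_{i,1}$ is then read from the minimality recursion of \lemref{l:La}(4): for the first step it gives $\cf_{E_{i,1}}(L_n)=\Ce{n\beta_i/\alpha_i}$. Restricting $L_n$ to $E_0$, and using that only $E_{i,1}$ meets $E_0$ (transversally, at $P_i$), yields $(L_n-nE_0)|_{E_0}=\sum_i\Ce{n\beta_i/\alpha_i}P_i$; combined with $\cO_{E_0}(-E_0)\cong\cO_{E_0}(Q)$ this gives the isomorphism $\cO_{E_0}(-L_n)\cong\cO_{E_0}(D_n)$.

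For the remaining assertions I would set up a telescoping sum over $n$. Writing $b_n=\dim H^1(\cO_X(-L_n))$, the exact sequence
\[
0\to \cO_X(-L_n-E_0)\to \cO_X(-L_n)\to \cO_{E_0}(D_n)\to 0,
\]
obtained by tensoring $0\to\cO_X(-E_0)\to\cO_X\to\cO_{E_0}\to 0$ with $\cO_X(-L_n)$ and using the isomorphism above, produces via its cohomology sequence the identity $\dim H^1(\cO_X(-L_n-E_0))-b_n=a_n-h^1(D_n)$, where $a_n\ge 0$ is the dimension of the cokernel of the restriction $H^0(\cO_X(-L_n))\to H^0(\cO_{E_0}(D_n))$. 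Monotonicity of the recursion gives $L_n\le L_{n+1}$ and in fact $L_{n+1}\ge L_n+E_0$, so there is a second sequence
\[
0\to \cO_X(-L_{n+1})\to \cO_X(-L_n-E_0)\to \cO_{\Theta_n}(-L_n-E_0)\to 0,
\]
with $\Theta_n:=L_{n+1}-L_n-E_0\ge 0$ supported on the arms.

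The hard part will be showing that this second sequence does not affect $H^1$, i.e. that $H^1(\cO_X(-L_{n+1}))\to H^1(\cO_X(-L_n-E_0))$ is an isomorphism. This is exactly where the anti-nef construction of $L_n$ and the rationality of the arms enter. One computes that $\cO_X(-L_n-E_0)$ has degree $-L_nE_{i,j}\ge 0$ on $E_{i,j}$ for $j\ge 2$ and degree $-L_nE_{i,1}-1\ge -1$ on $E_{i,1}$, so on every component of each arm it has degree $\ge -1$. Peeling off the components of $\Theta_n$ one at a time, each being a $\P^1$, and using that each arm resolves a rational singularity, one obtains $H^1(\cO_{\Theta_n}(-L_n-E_0))=0$ together with the surjectivity of $H^0(\cO_X(-L_n-E_0))\to H^0(\cO_{\Theta_n}(-L_n-E_0))$; these two facts force the desired isomorphism on $H^1$. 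I expect this vanishing-and-surjectivity step to be the main obstacle, since it is the only place the minimality of $L_n$ is genuinely used.

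Granting it, the two identities combine to $b_n-b_{n+1}=h^1(D_n)-a_n$. Summing over $n$ and using $b_0=\dim H^1(\cO_X)=p_g\V$, together with $b_N=0$ and $h^1(D_N)=0$ for $N\gg 0$ (the latter because $\deg D>0$), gives $p_g\V=\sum_{n\ge 0}h^1(D_n)-\sum_{n\ge 0}a_n$. By \thmref{t:Pin} applied to $(\bV,o)$ we have $\sum_{n\ge 0}h^1(D_n)=p_g(\bV,o)$, whence $p_g\V=p_g(\bV,o)-\sum_n a_n\le p_g(\bV,o)$. Equality forces every $a_n=0$, i.e. every restriction $H^0(\cO_X(-L_n))\to H^0(\cO_{E_0}(D_n))$ is surjective; since left-exactness of the displayed sequence is automatic, this is precisely the asserted exactness of $0\to H^0(\cO_X(-L_n-E_0))\to H^0(\cO_X(-L_n))\to H^0(\cO_{E_0}(D_n))\to 0$.
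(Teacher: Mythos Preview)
The paper does not prove \thmref{t:TW}; it is quoted as a result of Tomari--Watanabe \cite[\S 6]{tki-w} and used thereafter as a black box, so there is no ``paper's own proof'' to compare against. That said, your outline reconstructs essentially the argument in \cite{tki-w}: build $L_n$ arm by arm via the cyclic-quotient recursion of \sref{ss:cyc}, read off the restriction to $E_0$ to get $\cO_{E_0}(-L_n)\cong\cO_{E_0}(D_n)$, and then telescope the two short exact sequences over $n$ to obtain $p_g\V=p_g(\bV,o)-\sum_n a_n$, from which both the inequality and the exactness statement follow.

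One technical point deserves tightening. In your second step you want both $H^1(\cO_{\Theta_n}(-L_n-E_0))=0$ and the surjectivity of $H^0(\cO_X(-L_n-E_0))\to H^0(\cO_{\Theta_n}(-L_n-E_0))$. The vanishing follows from your degree count, but the $H^0$-surjectivity is not an immediate consequence of the arms being rational, and your sketch does not really justify it. The clean way to get both facts simultaneously is to run a Laufer computation sequence from $L_n+E_0$ to $L_{n+1}$: the only arm-components on which $L_n+E_0$ fails to be anti-nef are the $E_{i,1}$ with $L_nE_{i,1}=0$, and there the intersection number is exactly $+1$; adding such an $E_{i,1}$ pushes the defect one step outward along the chain, again with intersection exactly $+1$, and so on. Thus at every step the cokernel sheaf is $\cO_{\P^1}(-1)$, which has $H^0=H^1=0$, and you obtain directly $H^i(\cO_X(-L_{n+1}))\cong H^i(\cO_X(-L_n-E_0))$ for $i=0,1$ without any separate surjectivity argument. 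With this adjustment your proof is complete.
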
 

\begin{rem} \label{r:E0coeff}
From the definitions of $Z_X$ and $M_X$, 
we have the following: 
\begin{align*}
\cf_{E_0}(Z_X)& =\min\defset{m\in \Z_{>0}}{\deg D_m\ge 0}, \\
\cf_{E_0}(M_X)&=\min\defset{m\in \Z_{>0}}{H^0(D_m)\ne 0}.
\end{align*}
Clearly, $z_0:=\cf_{E_0}(Z_X) \le m_0:=\cf_{E_0}(M_X)$. 
One of fundamental problems is to find a characterization for the equality $z_0=m_0$. 
We have $Z_X=L_{z_0}$ by the definition of the cycles $L_n$.
It might be natural to ask  whether the condition $m_0=z_0$ implies the equality $M_X=Z_X$.
For Brieskorn complete intersection singularities, we have a criterion for $z_0=m_0$ and we always have $M_X=L_{m_0}$ (see \cite{K-N}, \cite{MO}).
However, in general, this is not true even for weighted homogeneous singularities (see \cite{TT}).
We will see later (\proref{p:maxpg3}) an example of a weighted homogeneous singularity homeomorphic to a Brieskorn complete intersection singularity which does not satisfy $M_X=L_{m_0}$ though $z_0=m_0$ and has the ``maximal geometric genus'' in the following sense.
\end{rem}

\begin{defn}\label{d:pgG}
Let $\cal X(\Gamma)$ denote the set of normal surface singularities with resolution graph $\Gamma$ and let
\[
p_g(\Gamma):=\max \defset{p_g(W,o)}{(W,o)\in \cal X(\Gamma)}.
\]
\end{defn}

Obviously, $p_g(\Gamma)$ is an invariant of $\Gamma$.
From \thmref{t:TW},  $p_g(\Gamma)$ 
is attained by a weighted homogeneous singularity.
However, the inequality $p_g(\bV,o)<p_g(\Gamma)$ may happen in general, namely, $p_g(\bV,o)$ is not topological, even if $\Gamma$ is a resolution graph of a Brieskorn complete intersection singularity (see \sref{s:exBCI}).

Let $\Fl{x}$ denote the floor (or, integer part) of a real number $x$.

\begin{ex}\label{e:mpg}
Assume that a resolution graph $\Gamma$ has the Seifert invariant 
\[
(g, c_0, k_1(\alpha_1, \beta_1), \dots, k_m(\alpha_m,\beta_m)),
\]
 where $k_i(\alpha_i,\beta_i)$ means that $(\alpha_i,\beta_i)$ is repeated $k_i$ times, and $(\alpha_i,\beta_i)\ne (\alpha_j,\beta_j)$ for $i\ne j$.
Moreover, assume that $k_2, \dots, k_m\in 2\Z$; in this case, we call $\Gamma$ a {\em hyperelliptic type}.

Let $C$ be a hyperelliptic or elliptic curve of genus $g$ and let $\cal R(C)$ be the set of ramification points of the double cover $C\to \P^1$ with 
involution $\sigma\: C\to C$.
Let $P\in \cal R(C)$ and $Q=c_0P$. Take $P_{i,j}\in C\setminus\cal R(C)$ ($1\le i \le m$, $1\le j \le \Fl{k_i/2}$) so that $P_{1,1}, \sigma(P_{1,1}), \dots, P_{m, \Fl{k_m/2}}, \sigma(P_{m, \Fl{k_m/2}})$ are different from each other.
Let $Q_{i,j}=P_{i,j}+\sigma(P_{i,j})$. 
Then we define  the Pinkham-Demazure divisor $D$ on $C$ by
\[
D=\begin{cases}
\dis Q-\sum_{i=1}^m \frac{\beta_i}{\alpha_i} \sum_{j=1}^{k_i/2}Q_{i,j}
& \text{ if } k_1 \in 2\Z, \\
\dis Q-\frac{\beta_1}{\alpha_1}P
-\frac{\beta_1}{\alpha_1}\sum_{j=1}^{(k_1-1)/2}Q_{1,j}-\sum_{i=2}^m \frac{\beta_i}{\alpha_i} \sum_{j=1}^{k_i/2}Q_{i,j} & \text{ if } k_1 \not\in 2\Z.
\end{cases}
\]
Since $Q_{i,j}\sim 2P$, we have $D_n\sim (\deg D_n)P$.
Let $R=\bigoplus_{k\ge 0}H^0(D_k)T^k$ and $\bV=\spec R$.
We say that the weighted homogeneous normal surface singularity $(\bV,o)$ is a {\em hyperelliptic type}, too.
Then the singularity $(\bV,o)$ has the resolution graph $\Gamma$ and 
$p_g(\bV,o)=p_g(\Gamma)$, because it follows from Clifford's theorem that $h^1(D_n)$ is the maximum of $h^1(D'_n)$, where $C'$ is any nonsingular curve of genus $g$ and 
$D'$ is any Pinkham-Demazure divisor on $C'$ which corresponding to the resolution graph $\Gamma$.
\end{ex}

The following problems are open even for Brieskorn complete intersections.

\begin{prob}
Give an explicit way to compute $p_g(\Gamma)$ from $\Gamma$.
\end{prob}

\begin{prob}
Classify complex structures which attain $p_g(\Gamma)$.
Is $E_0$ always hyperelliptic if $p_g\V=p_g(\Gamma)$?
\end{prob}

\begin{prob}
How can we generalize the notion of ``hyperelliptic type'' to non-star-shaped cases?
\end{prob}

\section{Brieskorn complete intersection singularities}
\label{s:BCI}
In this section, we review some basic facts on the Brieskorn complete intersection (BCI for short) surface singularities and study arithmetic properties of invariants of those singularities.
Then we show that a BCI singularity with $g\le 1$ always has the maximal geometric genus and its maximal ideal cycle coincides with the fundamental cycle on the minimal good resolution.
We basically use the notation of \sref{s:Pre}.

Recall that $\pi\: X\to V$ denotes the minimal good resolution of a normal surface singularity $\V$ with exceptional set $E$.

\subsection{The cycles and the Seifert invariants}\label{ss:BCISf}

We summarize the results in \cite{MO} which will be used in this section; those are a natural extension of the results on the hypersurface case obtained by Konno and Nagashima \cite{K-N}.
We assume that $\V$ is a BCI normal surface singularity, namely, 
$V\subset \C^m$ can be defined as
\begin{equation}\label{eq:BCIeq}
V=\defset{(x_i)\in \C^m}
{q_{i1}x_1^{a_1}+\cdots +q_{im}x_{m}^{a_{m}} =0,
\quad i=3,\dots , m}, 
\end{equation}
where $a_i$ are integers such that $2\le a_1\le \dots \le a_m$ 
and $q_{ij}\in \C$. 

We define  positive integers 
$\ell$, $\ell_i$, $\alpha$, $\alpha_i$, $\beta_i$, 
$\hat g$, $\hat g_i$, and $e_i$ as follows:\footnote{Using the notation of \cite[\S 3]{MO}, we have $l=d_m$, $\ell_i=d_{im}$, $\alpha_i=n_{im}$, $\beta_i=\mu_{im}$, $e_i=e_{im}$. }
\begin{gather*}
\ell:=\lcm\{a_1, \dots, a_m\}, \ \ 
\ell_i:=\lcm(\{a_1, \dots, a_m\}\setminus\{a_i\}),  \\ 
\alpha_i:=\ell/\ell_i,  \ \
\alpha:=\alpha_1\cdots \alpha_m, \ \ 
\hat g:=a_1\cdots a_{m}/\ell, \ \ 
\hat g_i:=\hat g \alpha_i/a_i, \ \ 
e_i:=\ell/a_i, \\
e_i\beta_i+1 \equiv 0 \pmod{\alpha_i} \; \text{ and } \; 0\le \beta_i<\alpha_i. 
\end{gather*}
We easily see that the polynomials appeared in \eqref{eq:BCIeq} are weighted homogeneous polynomials of degree $\ell$ with respect to the weights $(e_1, \dots, e_m)$ and that $\gcd\{\alpha_i ,\alpha_j\}=1$ for $i\ne j$.

\begin{defn}
Let $Z^{(i)}=(x_i)_E$, the exceptional part of the divisor $\di_X(x_i)$.
\end{defn}

The next result follows from Theorem 4.4, 5.1, 6.1 of \cite{MO}.

\begin{thm}\label{t:BCImain}
We have the following.
\begin{enumerate}
\item The resolution graph of $\V$  is as in \figref{fig:BCIG} ($s_i=0$ if $\alpha_i=1$), where 
\[
E=E_0+\sum_{i=1}^{m}\sum_{\nu=1}^{s_{i}}
\sum_{\xi=1}^{\hat g_i}E_{i,\nu,\xi}, 
\]
and the Seifert invariant is given by the following:
\begin{gather*}
2g-2=(m-2)\hat g -\sum_{i=1}^{m}\hat g_i, 
 \\ 
c_0=\sum _{i=1}^{m}
\frac{\hat g_i\beta_i}{\alpha_i}
+\frac{a_1\cdots a_{m}}{\ell^2}, \ \ 
\beta_i/\alpha_i=
\begin{cases}
[[c_{i,1}, \dots, c_{i,s_i}]]^{-1} & \text{if} \ \  \alpha_i\ge 2 \\
0 &  \text{if} \ \ \alpha_i=1.
\end{cases}
\end{gather*}
\item For $1\le i \le m$, we have   
\[
\cf_{E_0}(Z^{(i)})=e_i=\deg(x_i), \quad 
Z^{(i)}=\begin{cases}
\sum_{\xi=1}^{\hat g_i}E_{i,s_i,\xi}^* 
& \text{if} \ \  \alpha_i\ge 2 \\
\hat g_i E_0^* &  \text{if} \ \ \alpha_i=1.
\end{cases}
\]
Hence $Z^{(i)}=L_{e_i}$ for $1\le i \le m$, 
 and 
$M_X=Z^{(m)}$ since $e_1 \ge \cdots \ge e_m$.

\item We have $\cf_{E_0}(Z_X)=\min\{e_m, \alpha\}$ (cf. \remref{r:E0coeff}) 
and 
\[
Z_X=\begin{cases} M_X & \text{if} \ \  e_m\le \alpha \\
\deg (\alpha D) E_0^*  & \text{if} \ \  e_m> \alpha.
\end{cases}
\]
In particular, $Z_X=M_X$ if and only if $e_{m}\le \alpha$.
\end{enumerate}
\end{thm}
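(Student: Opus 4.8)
The plan is to recover all three parts from the explicit description of the Seifert invariants together with the cycle-theoretic interpretation of the $Z^{(i)}$. Since the statement is explicitly attributed to \cite{MO} (Theorems 4.4, 5.1, 6.1 there), the cleanest route is to reduce each assertion to those results and then extract the consequences by the general machinery of \sref{ss:WH} and \sref{ss:star}. I would organize the argument so that part (1) fixes the topology, part (2) identifies the divisors $Z^{(i)}=(x_i)_E$ with the cycles $L_{e_i}$, and part (3) compares $Z_X$ with $M_X$ via the coefficient $\cf_{E_0}$.

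For part (1), the key point is that the defining equations \eqref{eq:BCIeq} are weighted homogeneous of degree $\ell$ for the weights $(e_1,\dots,e_m)$, so that $V$ is a weighted homogeneous complete intersection and hence has a star-shaped graph as in \figref{fig:star}. The first step is to compute the Seifert data. The number of arms attached at each $P_i$ and their multiplicities $\hat g_i$ come from counting the branches of $\di_X(x_i)$ meeting $E_0$; since $x_i=0$ cuts out $\hat g_i$ points on the central curve (coming from the $a_1\cdots a_m/\ell$-to-one structure), one gets exactly $\hat g_i$ chains of type $(\alpha_i,\beta_i)$. The congruence $e_i\beta_i+1\equiv 0\pmod{\alpha_i}$ is precisely the condition that makes $\cO_{E_0}(-E_0)$ have the stated degree, and the genus formula $2g-2=(m-2)\hat g-\sum \hat g_i$ is then a Riemann–Hurwitz/adjunction computation on $E_0$ viewed as the base of the Seifert fibration. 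The value of $c_0$ follows by forcing $\deg D>0$ consistently with $\cO_{E_0}(-E_0)\cong\cO_{E_0}(Q)$.

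For part (2), I would compute $(x_i)_E$ directly. Because $x_i$ is a coordinate of weight $e_i$, its exceptional part has $\cf_{E_0}=e_i=\deg(x_i)$, and the non-exceptional part is the strict transform of $\{x_i=0\}$, which meets $E$ only along the outermost components $E_{i,s_i,\xi}$ of the arms through $P_i$ (or along $E_0$ itself when $\alpha_i=1$). The defining relation $Z^{(i)}E_j=-(\text{intersection of the strict transform with }E_j)$ then forces $Z^{(i)}=\sum_\xi E_{i,s_i,\xi}^*$ in the first case and $Z^{(i)}=\hat g_i E_0^*$ in the second, by the very definition of the dual cycles $E_j^*$. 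Once this is known, $Z^{(i)}$ is anti-nef off $E_0$ with $\cf_{E_0}(Z^{(i)})=e_i$, so by the minimality characterization in \thmref{t:TW} we get $Z^{(i)}=L_{e_i}$; and since $(x_i)$ are exactly the generators of $\m$ and $e_1\ge\cdots\ge e_m$, the minimum $M_X=\min_h(h)_E$ is attained at $Z^{(m)}$.

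For part (3), I would use \remref{r:E0coeff}: $\cf_{E_0}(Z_X)=\min\{m:\deg D_m\ge 0\}$. A direct computation of $\deg D_m=m\deg D-\sum_i\lceil m\beta_i/\alpha_i\rceil$ shows that the smallest such $m$ equals $\min\{e_m,\alpha\}$, the two candidates corresponding to whether the maximal ideal cycle $L_{e_m}$ or the cycle $\alpha D$ supported only on $E_0$ becomes anti-nef first. When $e_m\le\alpha$ we get $Z_X=L_{e_m}=Z^{(m)}=M_X$; when $e_m>\alpha$ the fundamental cycle becomes the pullback $\deg(\alpha D)E_0^*$. The hard part will be the last computation of $\cf_{E_0}(Z_X)$: one must verify that $\deg D_m\ge 0$ first occurs exactly at $\min\{e_m,\alpha\}$, which requires controlling the ceiling terms $\lceil m\beta_i/\alpha_i\rceil$ uniformly in $m$ and checking that $\alpha D$ is genuinely a cycle (i.e.\ has integral degree and is anti-nef) precisely when the $e_m$-candidate fails; this is the delicate arithmetic heart of the statement and is where the congruence conditions on $\beta_i$ and the coprimality $\gcd(\alpha_i,\alpha_j)=1$ are used in full.
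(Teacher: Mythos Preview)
The paper does not prove this theorem at all; the sentence preceding it reads ``The next result follows from Theorem 4.4, 5.1, 6.1 of \cite{MO}'', and no argument is given. So there is no proof in the paper to compare your proposal against---you are sketching the content of \cite{MO} rather than reproducing the paper's reasoning.

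That said, your sketch is largely on target but has a few loose points that would matter if you wrote it out. In part (2), knowing that $Z^{(i)}$ is anti-nef on $E-E_0$ with $\cf_{E_0}(Z^{(i)})=e_i$ gives only $Z^{(i)}\ge L_{e_i}$; to get equality you need the minimality criterion of \lemref{l:La}(2), not just \thmref{t:TW}. In part (3), your displayed expression for $\deg D_m$ drops the multiplicities $\hat g_i$ in front of each ceiling (recall $\bar P_i$ has degree $\hat g_i$), and the assertion that $\deg(\alpha D)E_0^*$ is an honest cycle when $e_m>\alpha$ needs the observation that the pairwise coprimality of the $\alpha_i$ makes $\alpha D$ integral together with the structure of $E_0^*$ (this is exactly what \lemref{l:E_0*} formalizes later in the paper). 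Finally, in part (1) the explanation of the congruence $e_i\beta_i+1\equiv 0\pmod{\alpha_i}$ as ``the condition that makes $\cO_{E_0}(-E_0)$ have the stated degree'' is not quite right: it arises from identifying the cyclic quotient type $C_{\alpha_i,\beta_i}$ at the orbifold points of the weighted blow-down, i.e.\ from the local monodromy of the Seifert fibration, not from a degree count.
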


\begin{figure}[htb]
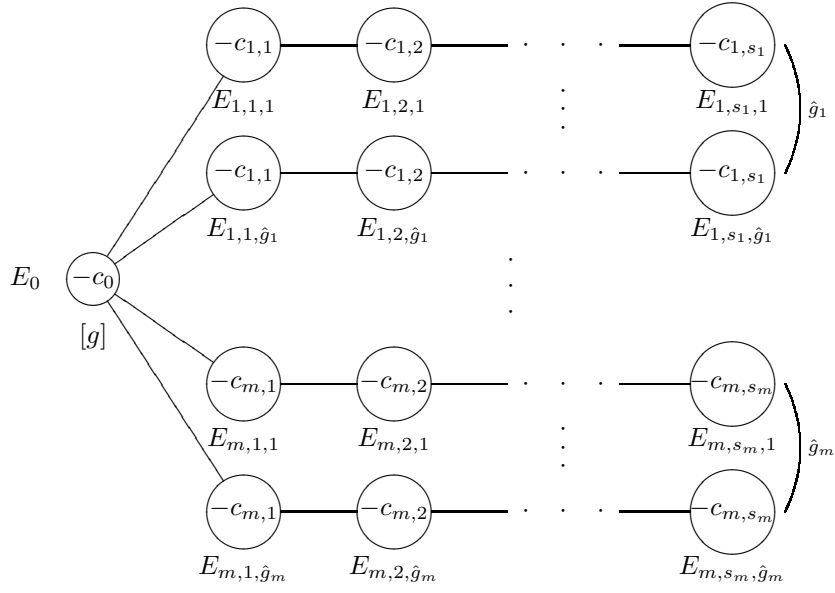

 \begin{center}
$
\xy
(-9,0)*+{E_0}; (0,0)*+{-c_0}*\cir<10pt>{}="E"*++!D(-2.0){[g]}; 
(20,31)*+{-c_{1,1} }*\cir<14pt>{}="A_1"*++!D(-2.0){E_{1,1,1}}; 
(40,31)*+{-c_{1,2} }*\cir<14pt>{}="A_2"*++!D(-2.0){E_{1,2,1}}; 
(85,31)*+{-c_{1,s_1} }*\cir<16pt>{}="A_3"*++!D(-2.0){E_{1,s_1,1}}; 
(20,14)*+{-c_{1,1} }*\cir<14pt>{}="B_1"*++!D(-2.0){E_{1,1,\hat g_1}}; (40,14)*+{-c_{1,2} }*\cir<14pt>{}="B_2"*++!D(-2.0){E_{1,2,\hat g_1}}; (85,14)*+{-c_{1,s_1} }*\cir<16pt>{}="B_3"*++!D(-2.0){E_{1,s_1,\hat g_1}}; 
(20,-14)*+{-c_{m,1} }*\cir<14pt>{}="D_1"*++!D(-2.0){E_{m,1,1}}; 
(40,-14)*+{-c_{m,2} }*\cir<14pt>{}="D_2"*++!D(-2.0){E_{m,2,1}}; 
(85,-14)*+{-c_{m,s_{m}} }*\cir<16pt>{}="D_3"*++!D(-2.0){E_{m,s_{m},1}};
(20,-31)*+{-c_{m,1} }*\cir<14pt>{}="E_1"*++!D(-2.0){E_{m,1,\hat g_{m}}};
(40,-31)*+{-c_{m,2} }*\cir<14pt>{}="E_2"*++!D(-2.0){E_{m,2,\hat g_{m}}};
(85,-31)*+{-c_{m,s_{m}} }*\cir<16pt>{}="E_3"*++!D(-2.0){E_{m,s_{m},\hat g_{m}}}; 
(57.5,31)*{\cdot },(62.5,31)*{\cdot },(67.5,31)*{\cdot }, 
(57.5,14)*{\cdot },(62.5,14)*{\cdot },(67.5,14)*{\cdot }, 
(57.5,-14)*{\cdot},(62.5,-14)*{\cdot},(67.5,-14)*{\cdot}, 
(57.5,-31)*{\cdot},(62.5,-31)*{\cdot},(67.5,-31)*{\cdot}, 
(62.5,25)*{\cdot},(62.5,22.5)*{\cdot},(62.5,20)*{\cdot},    
(55.5,2.5)*{\cdot},(55.5,-1)*{\cdot},(55.5,-4.5)*{\cdot}, 
(62.5,-20)*{\cdot},(62.5,-22.5)*{\cdot},(62.5,-25)*{\cdot}, 

\ar @{-} "E" ;"A_1"  
\ar @{-} "E" ;"B_1" 
\ar @{-} "E" ;"D_1"  
\ar @{-} "E" ;"E_1"

\ar @{-} "A_1"; "A_2"   
\ar @{-} "A_2"; (55,31) \ar @{-} (70,31);"A_3"  

\ar @{-} "B_1"; "B_2" 
\ar @{-}"B_2";(55,14) \ar @{-} (70,14);"B_3" 
\ar @/^2mm/@{-}^{\hat g_1} (92,31);(92,14) 

\ar @{-} "D_1" ; "D_2"  
\ar @{-} "D_2";(55,-14) \ar @{-} (70,-14);"D_3" 

\ar @{-} "E_1" ; "E_2" 
\ar @{-} "E_2";(55,-31) \ar @{-} (70,-31);"E_3" 
\ar @/^2mm/@{-}^{\hat g_{m}} (92,-14);(92,-31) 
\endxy
$
\end{center}
\caption{\label{fig:BCIG} The graph $\Gamma(a_1, \dots, a_m)$}
\end{figure}

\begin{defn}
We denote the weighted dual graph of \figref{fig:BCIG} by $\Gamma(a_1, \dots, a_m)$.
\end{defn}

\begin{rem}\label{r:ZiHi}
We describe more precisely the situation of \thmref{t:BCImain} (2).
Let $H_i:=\di_X(x_i)-Z^{(i)}$. Then we have the decomposition $H_i=\bigcup_{\xi=1}^{\hat g_i} H_{i,\xi}$ into irreducible components such that 
\begin{itemize}
\item $H_{i,\xi} E=H_{i,\xi} E_{i,s_i,\xi}=1$ if $\alpha_i\ne 1$, 
\item $H_{i,\xi} E=H_{i,\xi} E_{0}=1$ and $H_{i,\xi}\cap H_{i,\xi'}=\emptyset$ ($\xi\ne \xi'$) if $\alpha_i=1$.
\end{itemize}
In any cases, $H_i\cap H_j=\emptyset$ for $i\ne j$.

For $1\le i \le m$, let 
$\defset{P_{i\xi}}{\xi=1, \dots, \hg_i}\subset E_0$
 denote the set of points determined by $x_i=0$ in the weighted projective space $\P(e_1, \dots, e_m)$.
Then 
\[
\{P_{i\xi}\}=
\begin{cases}
E_0\cap E_{i,1,\xi} & \text{if $\alpha_i\ne 1$,} \\
E_0\cap H_{i,\xi} & \text{if $\alpha_i= 1$}.
\end{cases}
\]
\end{rem}

Let us recall that $\cO_{E_0}(-L_n)\cong \cO_{E_0}(D_n)$ (see \thmref{t:TW}) and $D_{\alpha}=\alpha D$.

\begin{lem}\label{l:E_0*}
We have the following.
\begin{enumerate}
\item For $n\in \Z_{> 0}$, $\alpha \mid n$  if and only if 
$L_n=(\deg D_n)E_0^*$.
In particular, if $\deg D_{e_i}>0$, then $\alpha \mid e_i$.
\item If $d\in \Z_{>0}$ and $dE_0^*\in \Z E$,  then $dE_0^*=L_n$, where $n=d\alpha/\deg D_{\alpha}$.
\end{enumerate}

\end{lem}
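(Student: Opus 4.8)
The plan is to reduce both parts to the elementary observation that, since the intersection form is negative definite, the $\Q$-cycles orthogonal to all $E_j$ with $j\neq 0$ form the line spanned by $E_0^*$, and then to analyze the arms one at a time as cyclic quotient chains. First I would record the identity $-L_n\cdot E_0=\deg D_n$: this follows from the isomorphism $\cO_{E_0}(-L_n)\cong\cO_{E_0}(D_n)$ of \thmref{t:TW} by taking degrees on $E_0$. Consequently $L_n\cdot E_j=0$ for every $j\neq 0$ if and only if $L_n=cE_0^*$ for some scalar $c$, and then $c=-L_n\cdot E_0=\deg D_n$. Thus the assertion $L_n=(\deg D_n)E_0^*$ in part (1) is \emph{equivalent} to the vanishing $L_n\cdot E_j=0$ for all $j\neq 0$.

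The remaining point is to decide, arm by arm, when these intersection numbers vanish. Fix $i$ with $\alpha_i\ge 2$ and an index $\xi$; the components $E_0,E_{i,1,\xi},\dots,E_{i,s_i,\xi}$ form a chain resolving a cyclic quotient singularity of type $C_{\alpha_i,\beta_i}$, and because distinct arms meet only along $E_0$, the restriction of $L_n$ to this chain is exactly the smallest element of the associated set $\cal D(n)$ (\ssref{ss:cyc}). By the characterization of this minimal cycle (\cite[Lemma 1.1, 1.2]{K-N}, as already used in the proof of \lemref{l:La}(4)), its coefficients satisfy $m_\nu=\Ce{m_{\nu-1}/d_\nu}$, and the chain is \emph{closed}, i.e. $L_n\cdot E_{i,\nu,\xi}=0$ for all $\nu$, precisely when every ceiling is an equality, which occurs if and only if $\alpha_i\mid n$ (arms with $\alpha_i=1$ have no chain and impose no condition). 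Combining over all arms and using $\gcd(\alpha_i,\alpha_j)=1$, we obtain $L_n\cdot E_j=0$ for all $j\neq 0$ iff $\alpha_i\mid n$ for every $i$ iff $\alpha\mid n$, which is part (1). For the ``in particular'' clause I would note that $L_{e_i}=Z^{(i)}$ satisfies $\deg D_{e_i}=-Z^{(i)}\cdot E_0$, which by \thmref{t:BCImain}(2) equals $0$ when $\alpha_i\ge 2$ and $\hat g_i$ when $\alpha_i=1$; hence $\deg D_{e_i}>0$ forces $\alpha_i=1$, in which case $Z^{(i)}=\hat g_iE_0^*=(\deg D_{e_i})E_0^*$ and part (1) applied to $n=e_i$ gives $\alpha\mid e_i$. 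The main obstacle is this chain computation, though it amounts to bookkeeping on continued fractions already available from \cite{K-N}.

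For part (2) I would first extract from part (1) applied to $n=\alpha$ the value $\cf_{E_0}(E_0^*)=\alpha/\deg D_\alpha$, since $L_\alpha=(\deg D_\alpha)E_0^*$ has $E_0$-coefficient $\alpha$. Given $d>0$ with $dE_0^*\in\Z E$, set $n:=d\alpha/\deg D_\alpha=\cf_{E_0}(dE_0^*)$, a positive integer. Then $dE_0^*$ is an integral effective cycle that is anti-nef on $E-E_0$ (indeed $dE_0^*\cdot E_j=0$ for $j\neq 0$) and has $E_0$-coefficient $n$, so minimality of $L_n$ yields $L_n\le dE_0^*$. The difference $F:=dE_0^*-L_n$ is effective, supported on $\bigcup_{j\neq 0}E_j$ (its $E_0$-coefficient is $n-n=0$), and satisfies $F\cdot E_j=-L_n\cdot E_j\ge 0$ for every arm component $E_j$. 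Hence $F^2=\sum_{j\neq 0}\cf_{E_j}(F)(F\cdot E_j)\ge 0$; but $F$ lies in the negative definite sublattice spanned by the arm components, so $F^2\le 0$ with equality only for $F=0$. Therefore $F=0$ and $dE_0^*=L_n$, as claimed.
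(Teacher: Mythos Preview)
Your proof is correct, but it takes a different route from the paper's in both parts. For part~(1), the paper contracts the arms via $\phi\colon X\to X'$ and argues through the local class groups of the resulting cyclic quotient points: the class of $\phi(E_0)$ at the $i$-th point has order~$\alpha_i$, so $n\phi(E_0)$ is Cartier on $X'$ (equivalently, $\phi^*(n\phi(E_0))\in\Z E$) exactly when $\alpha\mid n$, and then Lemma~2.1(2) identifies $\phi^*(n\phi(E_0))$ with $L_n$. Your argument reaches the same conclusion arm by arm via the continued-fraction recursion $m_\nu=\lceil m_{\nu-1}/d_\nu\rceil$, showing that all intersection numbers on an arm vanish iff every ceiling is an equality iff $\alpha_i\mid n$; this is the computational shadow of the class-group statement. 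The paper's version is more conceptual and avoids the continued-fraction bookkeeping, while yours stays entirely inside the cycle lattice. For part~(2), the paper again invokes Lemma~2.1(2) (each arm of $dE_0^*$ has end-intersection~$0\ge -1$, hence is already minimal). Your negative-definiteness trick---observing that $F=dE_0^*-L_n$ is effective, supported off $E_0$, and satisfies $F^2\ge 0$---is a pleasant self-contained alternative that bypasses Lemma~2.1(2) altogether.
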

\begin{proof}
(1)
Let $\phi\: X\to X'$ be the blowing-down of the divisor $E-E_0$. 
Then, at each point $\phi(P_{i\xi})\in X'$ ($1\le i\le m$, $1\le \xi\le \hat g _i$), the reduced divisor $\phi(E_0)$ is a $\Q$-Cartier divisor and the order of $[\phi(E_0)]\in \Cl(X', \phi(P_{i\xi}))$ is $\alpha_i$  (see \sref{ss:cyc}).
As in \cite[II (b)]{mum.top}, we have the pull-back $\phi^*\phi(E_0)$. Then $E_0^*=\cf_{E_0}(E_0^*)(\phi^*\phi(E_0))$.
Since $\alpha_i$'s are pairwise relatively prime, 
$\alpha$ is the minimal positive integer such that $\alpha\phi(E_0)$ is a Cartier divisor on $X'$, 
or equivalently, $\phi^*(\alpha\phi(E_0))\in \Z E$.
Hence $\alpha \mid n$ if and only if $\phi^*(n\phi(E_0))\in \Z E$.
If this is the case, $\phi^*(n\phi(E_0))=L_{n}$ by \lemref{l:La} (2), and moreover, $L_{n}=(-L_nE_0)E_0^*=(\deg D_{n})E_0^*$.
By \thmref{t:BCImain} (2), $L_{e_i}=(\deg D_{e_i})E_0^*$ if $\deg D_{e_i}>0$. 

(2)
As seen above, $dE_0^*=L_n$ by \lemref{l:La} (2).
Then $n=d\cf_{E_0}(E_0^*)$.
From (1), we have $\alpha=\deg D_{\alpha}\cf_{E_0}(E_0^*)$.
\end{proof}

\subsection{The coordinate ring and the semigroups}

By virtue of \thmref{t:BCImain}, we can write down the Pinkham-Demazure divisor as follows:
\[
D=Q-\Delta, \quad \Delta=\sum_{i=1}^m \frac{\beta_i}{\alpha_i}\bar P_i, \quad \bar P_i=\sum_{\xi=1}^{\hg_i}P_{i\xi}
\quad \text{($\beta_i=0$ if $\alpha_i=1$)}.
\]

\begin{defn}
We call a cycle $C\ge 0$ a {\em monomial cycle} if 
 $C=\sum_{i=1}^mm_iZ^{(i)}$ with $m_i\in \Z_{\ge 0}$, and write $x(C)=\prod_{i=1}^mx_i^{m_i}$.
Clearly, $(x(C))_E=C$.
\end{defn}

\begin{rem}\label{r:monom}
Let $C>0$ be an anti-nef $\Q$-cycle.
Suppose that $\alpha_i>1$ for $i\le s$ and $\alpha_i=1$ for $i>s$.
If, for each $i\le s$, $c_i:=CE_{i,s_i,\xi}$ is non-negative integer independent of $1\le \xi \le \hat g_i$,  and if the intersection numbers of $C$ and the exceptional components other than $E_{i,s_i,\xi}$ ($i\le s$, $1\le \xi\le \hat g_i$) are zero, then $C$ is a monomial cycle since $C=\sum_{i=1}^sc_iZ^{(i)}$.

On the other hand, even if $C\in \Z E$ and $C=cE_0^*$ for some $c\in \Z_{>0}$, $C$ is not necessarily a monomial cycle.
For example, if $\alpha<e_m$, then $L_{\alpha}=(\deg D_{\alpha})E_0^*$  is not a monomial cycle (see \lemref{l:E_0*}, \thmref{t:BCImain} (2)).
\end{rem}

Let $\gen{m_1,\dots, m_k}\subset \Z_{\ge 0}$ denote the numerical semigroup generated by integers  $m_1, \dots, m_k\in \Z_{\ge 0}$.
For $n\in \Z_{\ge 0}$, let $R_n=H^0(D_n)T^n\subset R:=R(V,o)$, the vector space of homogeneous functions of degree $n$ (see \sref{ss:WH}). 

\begin{prop}\label{p:monomials}
Let $n\in \Z_{\ge 0}$.
We have the following.
\begin{enumerate}
\item If $\deg D_n\in \gen{\hg_1, \dots, \hg_m}$, then 
there exists a monomial cycle $W$ such that $\cf_{E_0}(W)=n$, and hence $h^0(D_n)\ne 0$.

\item If $\deg D_n=\deg D_k \in \gen{\hg_1, \dots, \hg_m}$ for some $k\ge 0$, 
then $D_n\sim D_k$.
In particular,  if $\deg D_n=0$, then $D_n\sim 0$.
\item If $d:=\deg D_n>0$, then $dE_0^*\in \Z E$ and $\deg D_{\alpha}\mid d$.
\end{enumerate}
\end{prop}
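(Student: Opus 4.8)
The plan is to prove the three assertions in order, using the monomial cycles $Z^{(i)}$ as the bridge between the numerical data on $E_0$ and honest functions on $X$. The key observation driving (1) is that the integers $\hg_i$ are exactly the degrees $\deg D_{e_i}$ coming from the components $Z^{(i)}=(x_i)_E$; indeed by \thmref{t:BCImain} (2) we have $\cf_{E_0}(Z^{(i)})=e_i$, and restricting $L_{e_i}=Z^{(i)}$ to $E_0$ via \thmref{t:TW} together with \remref{r:ZiHi} identifies $\deg D_{e_i}=\hg_i$. So if $\deg D_n=\sum_i m_i\hg_i$ with $m_i\in\Z_{\ge 0}$, I would set $W=\sum_i m_i Z^{(i)}$. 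This $W$ is a monomial cycle by definition, with $\cf_{E_0}(W)=\sum_i m_i e_i$; and since the $x_i$ are homogeneous of degree $e_i$ the associated function $x(W)=\prod_i x_i^{m_i}$ lies in $R_{n'}$ where $n'=\sum_i m_i e_i$. One has to check that $n'=n$: this follows because $\deg D_{n'}=\deg D_n$ forces $n'=n$ once we know $\deg D_k$ is strictly increasing in $k$ in the relevant range, which in turn follows from $\deg D>0$ and the formula for $D_k$. Then $h^0(D_n)\ne 0$ because $x(W)$ is a nonzero section, establishing (1).

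For (2), the idea is that $D_n$ and $D_k$ both pull back from $E_0^*$-multiples in a controlled way. If $\deg D_n=\deg D_k\in\gen{\hg_1,\dots,\hg_m}$, then by (1) there are monomial cycles $W$ and $W'$ with $\cf_{E_0}(W)=n$, $\cf_{E_0}(W')=k$, and the corresponding functions $x(W),x(W')$ have the same non-exceptional behavior on $E_0$ since they are built from the same $x_i=0$ loci $\bar P_i$. Concretely I would compare $(L_n-nE_0)|_{E_0}$ and $(L_k-kE_0)|_{E_0}$ using the explicit formula $\sum_i\Ce{n\beta_i/\alpha_i}P_i=(L_n-nE_0)|_{E_0}$ from \thmref{t:TW}: the equality of degrees $\deg D_n=\deg D_k$ combined with the fact that the sections $x(W)/x(W')$ give a rational function on $E_0$ realizing the linear equivalence yields $D_n\sim D_k$. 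The special case $\deg D_n=0$ gives $D_n\sim 0$ since $0\in\gen{\hg_1,\dots,\hg_m}$ via the empty combination.

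For (3), I would use \lemref{l:E_0*}. Since $\deg D_n=d>0$, I want to produce an honest integral cycle proportional to $E_0^*$. The natural candidate is again a monomial cycle, but here the point is purely arithmetic: by part (1) of \lemref{l:E_0*}, $dE_0^*\in\Z E$ is equivalent to $\alpha\mid$ (the relevant index), and the divisibility $\deg D_\alpha\mid d$ should fall out of $\alpha=\deg D_\alpha\cdot\cf_{E_0}(E_0^*)$ combined with $L_n=(\deg D_n)E_0^*=dE_0^*$ whenever $L_n$ is an $E_0^*$-multiple. The cleanest route is to show first that $d>0$ forces $\deg D_n\in\gen{\hg_1,\dots,\hg_m}$ after passing to a suitable multiple, then invoke \lemref{l:E_0*} (2) to get $n=d\alpha/\deg D_\alpha$, which being an integer gives $\deg D_\alpha\mid d\alpha$; relative primality of the $\alpha_i$ then upgrades this to $\deg D_\alpha\mid d$.

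The main obstacle I anticipate is part (2): matching the linear equivalence class $D_n\sim D_k$ rather than merely the degree requires producing an explicit rational function on $E_0$ whose divisor is $D_n-D_k$, and the monomial cycles only directly control the supports $\bar P_i$, not arbitrary points. I expect the crux is to argue that the quotient of two monomial functions of the same degree descends to a rational function on $E_0$ with the correct divisor — that is, that the freedom in choosing the monomial representatives is exactly enough to absorb any discrepancy, which is where the semigroup hypothesis $\deg D_n\in\gen{\hg_1,\dots,\hg_m}$ is essential. Parts (1) and (3) are comparatively mechanical once the identification $\deg D_{e_i}=\hg_i$ is in hand.
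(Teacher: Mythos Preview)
Your approach to (1) breaks at two places. First, the ``key observation'' that $\deg D_{e_i}=\hg_i$ is false in general: since $L_{e_i}=Z^{(i)}$ and $\deg D_{e_i}=-Z^{(i)}E_0$, \thmref{t:BCImain} (2) gives $Z^{(i)}E_0=0$ when $\alpha_i\ge 2$ (because $Z^{(i)}=\sum_\xi E_{i,s_i,\xi}^*$), so $\deg D_{e_i}=0$, not $\hg_i$. The identification $\deg D_{e_i}=\hg_i$ only holds when $\alpha_i=1$. Second, even granting a monomial cycle $W$ with $\cf_{E_0}(W)=n'$ and $\deg D_{n'}=\deg D_n$, you cannot conclude $n'=n$: the sequence $\deg D_k$ is \emph{not} monotone (in the example $\Gamma(2,3,3,4)$ of \sref{s:exBCI}, $\deg D_2=1$, $\deg D_3=0$, $\deg D_5=1$), so equality of degrees does not pin down the index. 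Thus your $W=\sum m_i Z^{(i)}$ need not have $E_0$-coefficient $n$ at all.

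The paper's argument goes in the opposite direction: it starts from $L_n$, which by construction already has $\cf_{E_0}(L_n)=n$, and then uses \lemref{l:La} (1) together with the symmetry of the branches (the coefficients $\cf_{E_{i,j,\xi}}(L_n)$ are independent of $\xi$) to add a cycle $F$ supported on $E-E_0$ so that $L_n+F$ becomes a monomial cycle with the \emph{same} $E_0$-coefficient $n$. The semigroup hypothesis $\deg D_n=\sum c_i\hg_i$ enters only to first absorb the $E_0$-intersection: one subtracts the $Z^{(i)}$ with $\alpha_i=1$ and adds the chain cycles $F_i$ with $\alpha_i>1$ to pass to an anti-nef cycle $W'$ with $W'E_0=0$, and then straightens $W'$ into a monomial cycle. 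Part (2) then follows because these modifications of $L_n$ and $L_k$ leave the difference $L_n-L_k$ unchanged near $E_0$, so $L_n-L_k=\di_X(x(C_n)/x(C_k))$ there and restriction to $E_0$ gives $D_n\sim D_k$; your sketch of (2) is in the right spirit but rests on the broken (1). For (3) the paper again works from $L_n$: since $L_n-dE_0^*$ is anti-nef with zero $E_0$-intersection, the same straightening produces a monomial cycle, forcing $dE_0^*\in\Z E$, and then \lemref{l:E_0*} gives $\deg D_\alpha\mid d$. Your route through ``$d>0$ forces $\deg D_n\in\gen{\hg_1,\dots,\hg_m}$ after passing to a suitable multiple'' is circular and does not lead to the integrality of $dE_0^*$.
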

\begin{proof}
(1) 
We first assume that $\deg D_n=0$.
If $\alpha_i>1$, then  $\cf_{E_{i,j,\xi}}(L_n)$ is independent of $1\le \xi \le \hat g_i$ for each $1\le j \le s_i$ (see \figref{fig:BCIG}).
Therefore, by \lemref{l:La} (1), there exists a cycle $F>0$ such that $L:=L_n+F$ is a monomial cycle with  $\cf_{E_0}(L)=\cf_{E_0}(L_n)=n$ and $LE_0=0$ (cf. \remref{r:monom}).
Then $x(L)\in R_n$.

Next assume that $\deg D_n=c_1\hg_1+\cdots+c_m\hg_m>0$ ($c_i\in \Z_{\ge 0}$).
We may assume that $\alpha_i>1$ for $i\le s$ and $\alpha_i=1$ for $i>s$. For $i\le s$,  let $F_i=\sum_{\xi=1}^{\hat g_i}\sum_{j=1}^{s_i} E_{i,j,\xi}$. 
Since $F_i$ is anti-nef on its support and $\deg D_n = -L_n E_0$, 
it follows from \thmref{t:BCImain} (2) that the cycle
\[
W'=L_n+\sum_{i=1}^s c_i F_i-\sum_{i=s+1}^mc_iZ^{(i)}
\]
is anti-nef and  $W'E_0=0$.
Applying the argument above to the cycle $W'$, there exists a cycle $F'>0$ such that $W'+F'$ is a monomial cycle with  $\cf_{E_0}(W')=\cf_{E_0}(W'+F')$ and $(W'+F')E_0=0$.
Hence 
\[
W:=W'+F'+\sum_{i=s+1}^mc_iZ^{(i)}
\]
 is also a monomial cycle and $\cf_{E_0}(W)=\cf_{E_0}(W'+\sum_{i=s+1}^mc_iZ^{(i)})=n$. 
Thus, we obtain that $x(W)\in R_n$.

(2) 
We denote by $C_n$ the monomial cycle $W'+F'$ above, and also by $C_k$  the monomial cycle obtained from $L_k$ in the same manner as above.
Since $C_n-C_k=L_n-L_k$, on a suitably small neighborhood of $E_0\subset X$, we have
\[
L_n-L_k=\di_X(x(C_n)/x(C_k))\sim 0.
\]
Hence $D_n-D_k\sim (-L_n+L_k)|_{E_0}\sim 0$.

(3) Since $\deg D_n=-L_nE_0$, $L_n-dE_0^*$ is an anti-nef $\Q$-cycle with $(L_n-dE_0)E_0=0$.
By the argument above, there exists a cycle $F>0$ such that $L_n-dE_0^*+F$ is a monomial cycle.
Hence $dE_0^*$ is also a cycle (cf. \remref{r:monom}).
We have $\deg D_{\alpha} \mid d$ by \lemref{l:E_0*}.
\end{proof}

\begin{thm}\label{t:g1p_gmax}
If $g\le 1$, then $p_g(V,o)=p_g(\Gamma(a_1, \dots, a_m))$  (see \defref{d:pgG}). 
\end{thm}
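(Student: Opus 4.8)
The plan is to use Pinkham's formula $p_g\V=\sum_{k\ge 0}h^1(D_k)$ (\thmref{t:Pin}) and to compare, term by term, the values $h^1(D_k)$ for the given BCI against those of an arbitrary competitor. Since $p_g(\Gamma)$ is attained by a weighted homogeneous singularity (see the discussion following \thmref{t:TW}), it suffices to show that $p_g\V\ge p_g(\bV,o)$ for every weighted homogeneous singularity $(\bV,o)$ with resolution graph $\Gamma=\Gamma(a_1,\dots,a_m)$. Writing $D_k'$ for the Pinkham-Demazure divisors of $(\bV,o)$ and $D_k$ for those of the BCI $\V$, the two central curves both have genus $g$ and the degrees agree, $\deg D_k'=\deg D_k$, since these are determined by the Seifert invariant and hence by $\Gamma$. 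Thus I would reduce the theorem to the inequality $h^1(D_k)\ge h^1(D_k')$ for each $k$.

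When $g=0$ the central curve is $\P^1$, and a line bundle of degree $d$ on $\P^1$ has $h^1=\max(0,-d-1)$, which depends only on $d$. Hence each $h^1(D_k)$ is already a topological invariant, every singularity with graph $\Gamma$ has the same $p_g$, and the assertion is immediate.

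The substantive case is $g=1$, where $E_0$ is an elliptic curve and $K_{E_0}\sim 0$, so that $h^1(F)=h^0(-F)$ for any divisor $F$. If $\deg D_k\ne 0$ then $h^1(D_k)$ is forced by the degree: it equals $0$ when $\deg D_k>0$ and $-\deg D_k$ when $\deg D_k<0$, and the same holds for $D_k'$. When $\deg D_k=0$ one has $h^1(D_k)\le 1$, with equality precisely when $D_k\sim 0$; this is the only place where the complex structure can intervene. Here I would invoke \proref{p:monomials} (2), which asserts that for the BCI the vanishing $\deg D_k=0$ forces $D_k\sim 0$, so that $h^1(D_k)=1$ attains the maximum possible value. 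Combining the three ranges of degree gives $h^1(D_k)\ge h^1(D_k')$ for every $k$, whence $p_g\V\ge p_g(\bV,o)$, and therefore $p_g\V=p_g(\Gamma)$.

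The one genuinely arithmetic input is the degree-zero linear equivalence $D_k\sim 0$ for the BCI; this is exactly where the special structure of Brieskorn complete intersections enters, and it is supplied by the monomial-cycle construction behind \proref{p:monomials}. Everything else is a term-by-term application of Riemann-Roch on a curve of genus $\le 1$, where $h^1$ of a divisor is pinned down by its degree except at degree zero on the elliptic curve. Accordingly, the main obstacle is not in the genus $\le 1$ bookkeeping but in the prior result \proref{p:monomials} (2), whose proof already encapsulates the delicate step.
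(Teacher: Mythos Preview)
Your proof is correct and follows essentially the same approach as the paper: Pinkham's formula reduces the question to a term-by-term comparison of $h^1(D_k)$, the $g=0$ case is purely topological, and for $g=1$ the only nontrivial terms are those with $\deg D_k=0$, where \proref{p:monomials} (2) gives $D_k\sim 0$ and hence the maximal value $h^1(D_k)=1$. Your explicit reduction to weighted homogeneous competitors via \thmref{t:TW} is a helpful clarification that the paper leaves implicit, but the argument is otherwise the same.
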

\begin{proof}
By Pinkham's formula, $p_g(V,o)=\sum_{n\ge 0}h^1(D_n)$.
If $g=0$, then this is topological, and the assertion is clear.
Suppose that $g=1$. If $\deg D_n\ne 0$, then $h^1(D_n)$ is topological by Riemann-Roch theorem and Serre duality, namely, independent of the complex structure of $\V$.
If $\deg D_n=0$, then $h^1(D_n)=h^0(D_n)=1$ by \proref{p:monomials}.
Hence $p_g(V,o)= p_g(\Gamma(a_1, \dots, a_m))$.
\end{proof}

\begin{thm}\label{t:ehg}
We have the following.
\begin{enumerate}
\item $\gen{e_1, \dots, e_m}=\defset{n\in \Z_{\ge 0}}{h^0(D_n)\ne 0}$.
\item For $n\in \Z_{\ge 0}$, $n\in \gen{e_1, \dots, e_m}$ if and only if $\deg D_n\in \gen{\hat g_1, \dots, \hat g_m}$.
\end{enumerate}
\end{thm}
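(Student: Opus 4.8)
The plan is to prove the two statements in tandem, since part (2) is the arithmetic backbone that makes part (1) work. The cleanest route is to first establish the chain of implications
\[
n\in\gen{e_1,\dots,e_m}\ \Longrightarrow\ \deg D_n\in\gen{\hat g_1,\dots,\hat g_m}\ \Longrightarrow\ h^0(D_n)\ne 0\ \Longrightarrow\ n\in\gen{e_1,\dots,e_m},
\]
which simultaneously closes the loop for (1) and gives both directions of (2). The second arrow is already supplied by \proref{p:monomials} (1), so the real work is the first and third arrows.

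For the first arrow I would argue on generators. By \thmref{t:BCImain} (2) we have $\cf_{E_0}(Z^{(i)})=e_i$ and $Z^{(i)}=\sum_{\xi=1}^{\hat g_i}E_{i,s_i,\xi}^*$ (when $\alpha_i\ge 2$; the $\alpha_i=1$ case is $\hat g_iE_0^*$), so $-Z^{(i)}E_0=\deg D_{e_i}$ computes directly: pairing $E_{i,s_i,\xi}^*$ against $E_0$ gives a contribution which sums over the $\hat g_i$ arms to show $\deg D_{e_i}=\hat g_i$. (Alternatively one reads this off the Seifert-invariant formula for $c_0$ together with $\deg D_n=n\deg D-\sum_i\Ce{n\beta_i/\alpha_i}\hat g_i$.) Thus each generator $e_i$ maps to the generator $\hat g_i$. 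The key additive compatibility is then that $\deg D_n$ is \emph{additive on the semigroup} in the sense needed: if $n=\sum c_ie_i$ then $\deg D_n=\sum c_i\hat g_i$. This is exactly the content of \lemref{l:La} (4) combined with \thmref{t:TW}: the smallest cycle $L_{e_i}$ satisfies $L_{e_i}E_j=0$ on the relevant arms so that $L_{\sum c_ie_i}=\sum c_iL_{e_i}$, whence $\deg D_{\sum c_ie_i}=\sum c_i\deg D_{e_i}=\sum c_i\hat g_i$. I expect this additivity to be the main obstacle, because the naive identity $\deg D_{n+n'}=\deg D_n+\deg D_{n'}$ fails in general (the ceilings $\Ce{n\beta_i/\alpha_i}$ are only subadditive); the point is that it \emph{does} hold when one summand is a multiple of $e_i$, precisely because $e_i\beta_i+1\equiv 0\pmod{\alpha_i}$ forces $\Ce{e_i\beta_i/\alpha_i}=(e_i\beta_i+1)/\alpha_i=\hat g_i\beta_i/\alpha_i$-type cancellation, and this is what \lemref{l:La} (4) is engineered to exploit.

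For the third arrow, $h^0(D_n)\ne 0\Rightarrow n\in\gen{e_1,\dots,e_m}$, I would use that $R=\bigoplus_k H^0(D_k)T^k$ is generated as a $\C$-algebra by the coordinate functions $x_1,\dots,x_m$ of degrees $e_1,\dots,e_m$, which is the defining feature of a BCI. Hence any nonzero homogeneous element of $R_n=H^0(D_n)T^n$ is a polynomial in the $x_i$, and a monomial $\prod x_i^{m_i}$ of total degree $n=\sum m_ie_i$ witnesses membership of $n$ in $\gen{e_1,\dots,e_m}$. This direction, together with the generation statement, also gives the forward inclusion in (1): every $n$ with $h^0(D_n)\ne 0$ lies in the semigroup.

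It remains to assemble the equality in (1). The inclusion $\gen{e_1,\dots,e_m}\subseteq\{n:h^0(D_n)\ne 0\}$ follows by composing the first two arrows (use the generator identity $e_i\mapsto\hat g_i$, the additivity to land $\deg D_n$ in $\gen{\hat g_1,\dots,\hat g_m}$, then \proref{p:monomials} (1)); the reverse inclusion is the third arrow. For (2), the forward direction $n\in\gen{e_1,\dots,e_m}\Rightarrow\deg D_n\in\gen{\hat g_1,\dots,\hat g_m}$ is the first arrow, and the converse follows from \proref{p:monomials} (1) (giving $h^0(D_n)\ne 0$) composed with the third arrow. I would write the additivity computation as a short self-contained sublemma so that both theorems cite it cleanly, and I would handle the $\alpha_i=1$ components (where $Z^{(i)}=\hat g_iE_0^*$ and the monomial-cycle description degenerates) as a parallel but routine case inside that sublemma.
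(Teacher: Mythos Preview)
Your overall architecture—closing the loop via the chain of implications you wrote down—is correct and is exactly how the paper organizes the proof; your treatment of the second and third arrows matches the paper's. The problem is the first arrow: both of your key claims there fail when $\alpha_i\ge 2$. First, $\deg D_{e_i}=\hat g_i$ is wrong: since $E_{i,s_i,\xi}^*\cdot E_0=0$ (the central curve is not the end vertex of the arm), one has $\deg D_{e_i}=-Z^{(i)}E_0=0$, not $\hat g_i$. Second, the additivity $L_{\sum c_ie_i}=\sum c_iL_{e_i}$ is false, and \lemref{l:La} (4) does not give it: the hypothesis there requires $D'E_j=0$ for \emph{every} vertex of the chain, but $L_{e_i}E_{i,s_i,\xi}=-1$, so you cannot use it to add copies of $L_{e_i}$ on arm $i$. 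In the example $\Gamma(2,3,3,4)$ one has $e_4=3$, $L_3=3E_0+2(E_1+E_2+E_3)$, and $L_6=6E_0+3(E_1+E_2+E_3)\ne 2L_3$; correspondingly $\deg D_6=3$ while $\deg D_3=0$. Your formula $\deg D_n=\sum c_i\hat g_i$ is not even well defined: the representations $6=e_1$ and $6=2e_4$ give $\hat g_1=3$ and $2\hat g_4=6$.

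The paper's route for this arrow avoids any exact formula. Given $n=\sum m_ie_i$ it forms the monomial cycle $M=\sum m_iZ^{(i)}$, strips off the $\alpha_i=1$ summands into $M-W$ (these contribute $\sum_{\alpha_i=1}m_i\hat g_i$ to $-ME_0$), and compares the remaining monomial cycle $W$ with $L_{n'}$, where $n'=\cf_{E_0}(W)$. The crucial observation is that $W-L_{n'}\ge 0$ has $E_0$-coefficient $0$ and is symmetric across the $\hat g_i$ branches of each arm, so $(W-L_{n'})E_0$ is automatically a nonnegative integer combination of the $\hat g_i$. Finally \lemref{l:La} (4) \emph{is} invoked, but only to add back $M-W$: here each $Z^{(i)}=\hat g_iE_0^*$ (for $\alpha_i=1$) genuinely satisfies the zero-intersection hypothesis on every arm, yielding $L_n=L_{n'}+(M-W)$ and hence $\deg D_n=(W-L_{n'})E_0+\sum_{\alpha_i=1}m_i\hat g_i\in\gen{\hat g_1,\dots,\hat g_m}$. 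Your proposed sublemma should be recast along these lines.
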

\begin{proof}
(1) follows from the fact that $R=\bigoplus_{k\ge 0}H^0(D_k)T^k$ is generated by the elements $x_1, \dots, x_m$ with $\deg x_i=e_i$.

(2) The ``if'' part follows from  \proref{p:monomials} (1).
Assume that $n=\sum_{i=1}^mm_ie_i$ with $m_i \ge 0$.
Then the monomial cycle $M:=\sum_{i=1}^mm_iZ^{(i)}$ satisfies $\cf_{E_0}(M)=n$. 
We proceed in a similar way as in the proof of \proref{p:monomials}.
We may assume that $\alpha_i>1$ for $i\le s$ and $\alpha_i=1$ for $i>s$. Then $-ME_0 =\sum_{i>s}m_i\hat g_i\in \gen{\hat g_1, \dots, \hat g_m}$ by \thmref{t:BCImain} (2).
 Let $W=M-\sum_{i>s}m_iZ^{(i)}$ and $n'=\cf_{E_0}(W)$.
Clearly, $W$ is also a monomial cycle.
By the definition of $L_{n'}$, we have  $\cf_{E_0}(W-L_{n'})=0$ and $W-L_{n'}\ge 0$.
Since  $\cf_{E_{i,j,\xi}}(L_{n'})$ and $\cf_{E_{i,j,\xi}}(W)$) are independent of $1\le \xi \le \hat g_i$ for each $1\le j \le s_i$, 
we obtain that $(W-L_{n'})E_0\in \gen{\hat g_1, \dots, \hat g_m}$.
On the other hand, 
$L_n=L_{n'}+(M-W)$ by \lemref{l:La} (4).
Therefore,
\[
\deg D_n=-L_nE_0=(W-L_{n'}-M)E_0\in \gen{\hat g_1, \dots, \hat g_m}.
\qedhere
\]
\end{proof}

\begin{cor}
If $g>0$, then $a(R)\in \gen{e_1, \dots, e_m}$ and  $2g-2\in \gen{\hat g_1, \dots, \hat g_m}$.
Note that $a(R)=(m-2)\ell-\sum_{i=1}^me_i$ by \cite[3.1.6]{G-W}.
\end{cor}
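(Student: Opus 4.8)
The plan is to deduce both memberships from \thmref{t:ehg} by identifying the relevant divisor. Set $k:=a(R)=(m-2)\ell-\sum_{i=1}^m e_i$. The heart of the argument is the computation $\deg D_k=2g-2$; granting this, \thmref{t:ehg}~(2) shows that $a(R)\in\gen{e_1,\dots,e_m}$ is equivalent to $2g-2\in\gen{\hat g_1,\dots,\hat g_m}$, so it suffices to establish either one, and I will establish the former by showing $h^0(D_k)\neq0$ and invoking \thmref{t:ehg}~(1).

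For the degree computation I would first pin down the relevant congruence. Since $\alpha_i=\ell/\ell_i$ divides $\ell$ and divides each $e_j=\ell/a_j$ with $j\neq i$ (because $a_j\mid\ell_i$), we get $k\equiv-e_i\pmod{\alpha_i}$, and hence $k\beta_i\equiv-e_i\beta_i\equiv1\pmod{\alpha_i}$ using $e_i\beta_i+1\equiv0\pmod{\alpha_i}$ (this also covers $\alpha_i=1$, where $\beta_i=0$). Therefore $\Ce{k\beta_i/\alpha_i}=(k\beta_i-1)/\alpha_i+1$. Substituting this into $\deg D_k=kc_0-\sum_i\Ce{k\beta_i/\alpha_i}\hat g_i$, and using $\deg Q=c_0=\hat g\bigl(\sum_i\beta_i/a_i+1/\ell\bigr)$ together with $\hat g_i/\alpha_i=\hat g/a_i$, the terms containing the $\beta_i$ cancel and one is left with $\deg D_k=k\hat g/\ell+\hat g\sum_i 1/a_i-\sum_i\hat g_i$. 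Finally $k\hat g/\ell=(m-2)\hat g-\hat g\sum_i 1/a_i$ since $\sum_i e_i=\ell\sum_i 1/a_i$, so $\deg D_k=(m-2)\hat g-\sum_i\hat g_i=2g-2$ by \thmref{t:BCImain}~(1).

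It remains to see that $D_k$ is effective. By the standard grading of local cohomology underlying Pinkham's formula (\thmref{t:Pin}), namely $[H^2_{\m}(R)]_n\cong H^1(D_n)$, the $a$-invariant $a(R)=k$ is the top degree in which this cohomology is nonzero, so $h^1(D_k)\neq0$. Serre duality on $E_0$ gives $h^0(K_{E_0}-D_k)=h^1(D_k)\neq0$, while the first step shows $\deg(K_{E_0}-D_k)=(2g-2)-\deg D_k=0$; an effective divisor of degree $0$ is trivial, so $D_k\sim K_{E_0}$ and $h^0(D_k)=g>0$. Hence $a(R)=k\in\gen{e_1,\dots,e_m}$ by \thmref{t:ehg}~(1), and then $2g-2=\deg D_k\in\gen{\hat g_1,\dots,\hat g_m}$ by \thmref{t:ehg}~(2).

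The routine but error-prone part is the bookkeeping with the ceilings and the cancellation in the degree formula; the only genuinely non-elementary input is the identification of $a(R)$ with the top nonvanishing degree of $H^2_{\m}(R)\cong\bigoplus_n H^1(D_n)$, which is the same computation that yields \thmref{t:Pin}. I expect the main obstacle to be making the congruence $k\beta_i\equiv1\pmod{\alpha_i}$ fully rigorous across the cases $\alpha_i=1$ and $\alpha_i>1$, and confirming that the effectivity step does not secretly require the Gorenstein hypothesis of \thmref{t:WatD}.
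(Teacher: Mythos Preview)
Your proof is correct, but the paper's argument is considerably shorter. The paper simply invokes \thmref{t:WatD}: since $R$ is a complete intersection (hence Gorenstein), Watanabe's criterion gives $K_{E_0}\sim D_{a(R)}$ directly, so $h^0(D_{a(R)})=h^0(K_{E_0})=g>0$ and $\deg D_{a(R)}=2g-2$; then \thmref{t:ehg} finishes. You instead \emph{re-derive} the linear equivalence $D_k\sim K_{E_0}$ in two steps: an explicit arithmetic computation of $\deg D_k$ via the congruence $k\beta_i\equiv 1\pmod{\alpha_i}$, and then the local-cohomology characterization of $a(R)$ combined with Serre duality. Both routes land on the same key fact and then apply \thmref{t:ehg} identically. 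Your approach trades the black-box citation of \thmref{t:WatD} for a hands-on verification; this is longer but has the virtue of making the equality $\deg D_{a(R)}=2g-2$ transparent from the Seifert data, and your Serre-duality step shows that the Gorenstein hypothesis is only used through the explicit formula for $a(R)$, not in the effectivity argument itself. Regarding your stated concerns: the congruence step is fine in both cases (for $\alpha_i=1$ one has $\beta_i=0$ and both sides of the ceiling identity vanish), and the effectivity step genuinely does not invoke \thmref{t:WatD}.
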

\begin{proof}
By \thmref{t:WatD}, $K_{E_0}\sim D_{a(R)}$.  Since $h^0(K_{E_0})=g>0$, the assertion follows from \thmref{t:ehg}.
\end{proof}

\begin{thm}\label{t:g=1MZ}
If  $H^0(D_{\alpha})\ne 0$, then $M_X=Z_X$. 
In particular, if $g\le 1$, then $M_X=Z_X$.
\end{thm}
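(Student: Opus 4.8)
The plan is to reduce the identity $M_X=Z_X$ to the numerical criterion supplied by \thmref{t:BCImain} (3), which states that $M_X=Z_X$ holds precisely when $e_m\le \alpha$. Thus the whole statement comes down to verifying the single inequality $e_m\le\alpha$ under each hypothesis, and no direct manipulation of cycles is needed beyond what that theorem already encodes.

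For the first assertion I would exploit the description of the $E_0$-coefficient of the maximal ideal cycle. By \thmref{t:BCImain} (2) we have $\cf_{E_0}(M_X)=e_m$, while \remref{r:E0coeff} identifies this coefficient $m_0:=\cf_{E_0}(M_X)$ as the least positive integer $m$ with $H^0(D_m)\ne 0$. Hence, if $H^0(D_\alpha)\ne 0$, then $\alpha$ lies in the set whose minimum is $m_0=e_m$; since $\alpha>0$, this forces $e_m=m_0\le\alpha$, and the criterion of \thmref{t:BCImain} (3) immediately yields $M_X=Z_X$.

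For the ``in particular'' clause it then suffices to check that $g\le 1$ guarantees $H^0(D_\alpha)\ne 0$. Recall that $D_\alpha=\alpha D$ is an honest divisor on $E_0$ with $\deg D_\alpha=\alpha\deg D$; since $\deg D>0$ this degree is a positive integer, so $\deg D_\alpha\ge 1$. If $g=0$, then $E_0\cong\P^1$ and $h^0(D_\alpha)=\deg D_\alpha+1>0$. If $g=1$, then $\deg D_\alpha\ge 1>2g-2$ forces $h^1(D_\alpha)=0$ by Serre duality, so Riemann--Roch gives $h^0(D_\alpha)=\deg D_\alpha\ge 1>0$. In either case $H^0(D_\alpha)\ne 0$, and the first part applies.

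The argument is essentially bookkeeping once the right objects are in place, so I expect no serious analytic obstacle. The one point requiring care is the logical direction in the second paragraph: the minimality characterization of $m_0$ in \remref{r:E0coeff} is exactly what converts the nonvanishing $H^0(D_\alpha)\ne 0$ into an \emph{upper} bound $e_m\le\alpha$ rather than a lower one, and one must keep track of the fact that $\alpha$ is a genuine positive integer so that it legitimately competes in the minimum defining $m_0$.
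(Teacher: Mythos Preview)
Your proof is correct and follows the same overall strategy as the paper: deduce $e_m\le\alpha$ from $H^0(D_\alpha)\ne 0$ and then invoke \thmref{t:BCImain} (3). The only difference is in the intermediate step: the paper appeals to \thmref{t:ehg} (1) to conclude $\alpha\in\gen{e_1,\dots,e_m}$ and hence $\alpha\ge e_m$, whereas you bypass the semigroup description entirely by combining \thmref{t:BCImain} (2) with the minimality characterization of $\cf_{E_0}(M_X)$ in \remref{r:E0coeff}; your route is slightly more direct since it avoids the machinery of \proref{p:monomials} underlying \thmref{t:ehg}, while the paper's route makes the semigroup structure explicit. The treatment of the $g\le 1$ case is the same in both.
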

\begin{proof}
If $H^0(D_{\alpha})\ne 0$, then  $\alpha\in \gen{e_1, \dots, e_m}$ by \thmref{t:ehg}.
Hence  $e_m \le \alpha$, and $M_X=Z_X$ by \thmref{t:BCImain}. 
If $g\le 1$, we have $H^0(D)\ne 0$ for any divisor $D$ on $E_0$ with $\deg D>0$.
\end{proof}

\begin{ex}
We have seen that if $\alpha<e_m$, then $H^0(D_{\alpha})=0$ even though $D_{\alpha}>0$.
We show that the condition $e_m < \alpha$ does not imply $H^0(D_{\alpha})\ne 0$; thus, the converse of \thmref{t:g=1MZ} does not hold.

Suppose that $(a_1,a_2,a_3)=(6,10,45)$.
Then we have 
\[
\{e_{1}, e_2, e_{3}\}=\{15, 9, 2\}, \quad 
\{\hat g_1, \hat g_2, \hat g_3\}=\{5, 3, 2\}, \quad 
\alpha = 3, \quad \deg D_{\alpha}=1,
\]
and  $H^0(D_{\alpha})=0$ by \thmref{t:ehg}.
Note  that the Seifert invariant is $(11, 1, 2(3,1))$.
This is a hyperelliptic type (see \exref{e:mpg}).
Hence $p_g\V=p_g(\Gamma(6,10,45))$.
\end{ex}

\subsection{Non-BCI singularities}
\label{s:NBCI}
In the rest of this section, we assume that $\V$ is an arbitrary weighted homogeneous singularity with resolution graph $\Gamma(a_1, \dots, a_m)$.
We use the same notation as above.
Recall that the Pinkham-Demazure divisor is expressed as $D=Q-\Delta$.

\begin{lem}\label{l:M=Z}
Assume that $\alpha\le e_m$.
Then $M_X= Z_X$ if and only if there exists an effective divisor $F$ on $E_0$ such that $\alpha  D =D_{\alpha}\sim F$ and $\supp F \cap \supp \Delta=\emptyset$.
\end{lem}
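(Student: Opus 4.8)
We have a weighted homogeneous singularity $\V$ with resolution graph $\Gamma(a_1,\dots,a_m)$, so its Pinkham-Demazure divisor is $D=Q-\Delta$ where $\Delta=\sum_i(\beta_i/\alpha_i)\bar P_i$. Under the hypothesis $\alpha\le e_m$, Theorem~\ref{t:BCImain}(3) tells us (in the BCI case, and the topological content carries over) that $\cf_{E_0}(Z_X)=\alpha$ and that $Z_X=L_\alpha=(\deg D_\alpha)E_0^*$. I want a criterion for when $M_X$, the maximal ideal cycle, equals this $Z_X$.

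**The approach.** The plan is to translate $M_X=Z_X$ into a statement about the existence of a function in the maximal ideal whose exceptional divisor is exactly $L_\alpha$, and then to read that off from sections on $E_0$. By Remark~\ref{r:E0coeff} we have $z_0=\cf_{E_0}(Z_X)=\alpha$ (using $\alpha\le e_m$, so $\deg D_\alpha>0$ is the first place the degree turns positive), and $m_0=\cf_{E_0}(M_X)=\min\{m:H^0(D_m)\ne 0\}$. First I would establish that $M_X=Z_X$ forces $m_0=z_0=\alpha$, hence $H^0(D_\alpha)\ne 0$; conversely I must produce, from a section, a function realizing $L_\alpha$ as its exceptional part. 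The bridge is Theorem~\ref{t:TW}: for a weighted homogeneous singularity $p_g\V=p_g(\bV,o)$, so the sequence $0\to H^0(\cO_X(-L_\alpha-E_0))\to H^0(\cO_X(-L_\alpha))\to H^0(\cO_{E_0}(D_\alpha))\to 0$ is exact, giving me a function $h\in H^0(\cO_X(-L_\alpha))$ restricting to a chosen section $s\in H^0(D_\alpha)$.

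**Key steps in order.** I would argue as follows. Assume first $M_X=Z_X=L_\alpha$. Then $H^0(D_\alpha)\ne 0$, so pick $0\ne s\in H^0(D_\alpha)$ with divisor $\di_{E_0}(s)+D_\alpha=:F\ge 0$ on $E_0$. The maximal-ideal-cycle condition means there is $h\in\m$ with $(h)_E=L_\alpha$, and such $h$ comes (via the exact sequence of Theorem~\ref{t:TW}) from a section of $\cO_{E_0}(D_\alpha)$; the non-exceptional part of $\di_X(h)$ meets $E_0$ exactly along $F$. The content of $M_X=L_\alpha$ rather than $M_X>L_\alpha$ is precisely that the extra points where $h$ vanishes on $E_0$ avoid the points $P_{i\xi}$ supporting $\Delta$ — for if $F$ met $\supp\Delta$, the vanishing would be absorbed into the legs $E_{i,\nu,\xi}$ and raise the coefficients there, forcing $(h)_E>L_\alpha$. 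So $M_X=Z_X$ yields such an $F$ with $\alpha D=D_\alpha\sim F$ and $\supp F\cap\supp\Delta=\emptyset$. Conversely, given such $F$, the corresponding section $s\in H^0(D_\alpha)$ lifts to $h\in H^0(\cO_X(-L_\alpha))\subset\m$ whose exceptional part is $L_\alpha$ (the disjointness from $\supp\Delta$ guaranteeing no extra exceptional vanishing), so $M_X\le L_\alpha=Z_X\le M_X$, giving equality.

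**Main obstacle.** The delicate point is the precise correspondence between where a section $s\in H^0(D_\alpha)$ vanishes on $E_0$ and the exact coefficients of $(h)_E$ along the chain legs $E_{i,\nu,\xi}$. I expect the hard part to be verifying that vanishing of $s$ \emph{at} a point $P_{i\xi}\in\supp\Delta$ strictly increases the exceptional coefficients away from $L_\alpha$, while vanishing at points outside $\supp\Delta$ does not — this is where the cyclic-quotient structure of each leg (Section~\ref{ss:cyc}, Lemma~\ref{l:La}) and the identification $\sum_i\Ce{k\beta_i/\alpha_i}P_i=(L_n-nE_0)|_{E_0}$ from Theorem~\ref{t:TW} must be used to control how the ceiling adjustments interact with the chosen section's zero divisor $F$. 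Managing this bookkeeping carefully, rather than any single hard estimate, is the crux.
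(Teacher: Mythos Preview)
Your overall strategy is correct and matches the paper's: reduce to the existence of a degree-$\alpha$ homogeneous function $h$ with $(h)_E=L_\alpha$, and relate this via the surjection $H^0(\cO_X(-L_\alpha))\to H^0(\cO_{E_0}(D_\alpha))$ of \thmref{t:TW} to a section of $D_\alpha$ whose zero divisor $F$ avoids $\supp\Delta$.

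Where you diverge is in the execution, and the paper's route is considerably shorter. In the forward direction you argue by contradiction that if $F$ met $\supp\Delta$ the leg coefficients of $(h)_E$ would have to jump. The paper instead uses directly that $Z_X=L_\alpha=cE_0^*$ with $c=\deg D_\alpha$ (from \thmref{t:BCImain} and \lemref{l:E_0*}): writing $\di_X(h)=Z_X+H$, the non-exceptional part satisfies $H\sim -cE_0^*$, so $HE_i=0$ for every $E_i\ne E_0$; thus $H$ misses all the legs and $F=H|_{E_0}$ is automatically disjoint from $\supp\Delta$. No leg-by-leg analysis is needed.

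In the converse direction, the ``main obstacle'' you anticipate---controlling the exceptional coefficients along each chain via \lemref{l:La} and ceiling bookkeeping---is completely bypassed. After lifting the section to $h$ and writing $\di_X(h)=cE_0^*+E'+H$ with $E'\ge 0$ supported in $E-E_0$, the disjointness hypothesis forces $E'|_{E_0}=0$ (since $E'|_{E_0}$ lives in $\supp\Delta$ while $(E'+H)|_{E_0}=F$ avoids it). Then one line of intersection theory finishes: $0=\di_X(h)\cdot E'=cE_0^*\cdot E'+E'^2+H\cdot E'=E'^2+H\cdot E'$, and since $H\cdot E'\ge 0$ and the intersection form is negative definite, $E'=0$. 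So the chain combinatorics you flagged as the crux never enter.
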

\begin{proof}
Let $c=\deg D_{\alpha}$.
Since $\alpha\le e_m$, it follows from \thmref{t:BCImain} and \lemref{l:E_0*} that $Z_X=L_{\alpha}=cE_0^*$ (note that the fundamental cycle is determined by the resolution graph).
On the other hand, $M_X= Z_X$ if and only if there exists a function $h\in H^0(\cO_X(- Z_X))$ such that $\di_X(h)=Z_X+H$, where $H$ is the non-exceptional part. 
In this case, we have $EH=E_0H$ since $H\sim -cE_0^*$.
Thus $(E-E_0)H=0$.
Let $F=H|_{E_0}$. Then $\supp F \cap \supp \Delta=\emptyset$ and 
$D_{\alpha}\sim -L_{\alpha}|_{E_0}\sim F$.

Conversely, suppose that $D_{\alpha}\sim F>0$ and $\supp F \cap \supp \Delta=\emptyset$. 
Since $H^0(D_{\alpha})\ne 0$, there exists $h\in H^0(\cO_X)$ such that $\di_X(h)=cE_0^*+E'+H$ where $E'$ is a cycle supported in $E-E_0$ and $H$ is the non-exceptional part.
By assumption, $(E'+H)|_{E_0}\sim -L_{\alpha}|_{E_0} \sim F$.
In fact, we may assume that $(E'+H)|_{E_0}=F$, since the restriction map $H^0(\cO_X(-L_n)) \to H^0(\cO_{E_0}(D_n))$ is surjective by \thmref{t:TW}.
Then $H|_{ E_0}=F$ by the assumption on the supports, and $E'=0$ since $E'^2=\di_X(h)E'=0$.
\end{proof}

\begin{lem}\label{l:QF} 
For any effective divisor $F\in \Di(E_0)$ such that $\deg F=\deg \alpha D$, there exists a divisor $\t Q\in \Di(E_0)$ such that 
\[
F\sim \alpha \t Q-\alpha \Delta.
\]
Let $\t D= \t Q- \Delta$ and $\t R=R(E_0, \t D)$ (see \sref{ss:star}).
If $R=R(E_0,D)$ is a Gorenstein ring, then $\t R$ is also Gorenstein if and only if $a(\t Q-Q)\sim 0$, where $a=a(R)$.
\end{lem}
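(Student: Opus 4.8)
The plan is to treat the two assertions separately, first producing the divisor $\t Q$ and then establishing the Gorenstein criterion.

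For the existence of $\t Q$, I would rewrite the desired relation $F\sim \alpha\t Q-\alpha\Delta$ as $\alpha\t Q\sim F+\alpha\Delta$ and first check that the right-hand side is an integral divisor: since $\alpha=\alpha_1\cdots\alpha_m$ with the $\alpha_i$ pairwise coprime, each coefficient $\alpha\beta_i/\alpha_i=(\alpha/\alpha_i)\beta_i$ of $\alpha\Delta$ is an integer. Taking degrees, $\deg(F+\alpha\Delta)=\deg\alpha D+\alpha\deg\Delta=\alpha(\deg D+\deg\Delta)=\alpha\deg Q$, so the class $[F+\alpha\Delta]-\alpha[Q]$ lies in $\Pic^0(E_0)$. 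The key input is that $\Pic^0(E_0)$, being the group of complex points of the Jacobian $\Jac(E_0)$, is a divisible group; hence there is a degree-zero class $[\eta]$ with $\alpha[\eta]=[F+\alpha\Delta]-\alpha[Q]$. Setting $\t Q=Q+\eta$ then gives $\alpha\t Q\sim\alpha Q+(F+\alpha\Delta-\alpha Q)=F+\alpha\Delta$, as required. I would record for later use that this forces $\deg\t Q=\deg Q$, and hence $\deg\t D=\deg\t Q-\deg\Delta=\deg D>0$, so $\t D$ is a genuine Pinkham--Demazure divisor for the graph $\Gamma$ and $\t R$ is a legitimate weighted homogeneous singularity ring.

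For the Gorenstein criterion, the essential observation is that $D$ and $\t D=\t Q-\Delta$ share the same fractional part $\Delta$, so the orbifold data $(\alpha_i,\beta_i)$ and the points $P_{i\xi}$, and therefore the divisor $D'$ appearing in \thmref{t:WatD}, are identical for $R$ and $\t R$. By \thmref{t:WatD}, the hypothesis that $R$ is Gorenstein yields $K_{E_0}\sim aD-D'$ with $a=a(R)$, while $\t R$ is Gorenstein if and only if there is an integer $\t a$ with $K_{E_0}\sim\t a\,\t D-D'$. Subtracting these, $\t R$ is Gorenstein if and only if $aD\sim\t a\,\t D$ for some $\t a\in\Z$.

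It then remains to pin down $\t a$. Taking degrees in $aD\sim\t a\,\t D$ and using $\deg\t D=\deg D>0$ from the first step, the relation $(a-\t a)\deg D=0$ forces $\t a=a$. Thus $\t R$ is Gorenstein if and only if $aD\sim a\t D$, that is $a(D-\t D)\sim 0$; since $D-\t D=Q-\t Q$, this is precisely $a(\t Q-Q)\sim 0$, the stated condition. The only genuinely nontrivial ingredient is the divisibility of $\Pic^0(E_0)$ used to manufacture $\t Q$ — which is exactly where the complex-analytic structure enters — while the remainder is degree bookkeeping once \thmref{t:WatD} is applied to both rings with the common divisor $D'$.
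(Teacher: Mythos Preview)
Your proof is correct and follows essentially the same route as the paper's: both use the divisibility of $\Pic^0(E_0)$ to produce a degree-zero class whose $\alpha$-th multiple is $F-\alpha D$, and both reduce the Gorenstein question to \thmref{t:WatD} with the common divisor $D'$. Your argument is in fact slightly more careful than the paper's on one point: the paper writes directly that ``$\t R$ is Gorenstein if and only if $(aD-D')\sim (a\t D-D')$'' without justifying why the integer in \thmref{t:WatD} for $\t R$ must equal $a=a(R)$, whereas you supply the degree argument ($\deg \t D=\deg D>0$ forces $\t a=a$) that closes this small gap.
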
 
\begin{proof}
Since $\deg(F-\alpha D)= 0$, 
there exists a divisor $Q_F$ with $\deg Q_F=0$ such that $\alpha Q_F\sim F-\alpha D$.  Let $\t Q=Q_F+Q$.
Then 
\[
\alpha \t Q-\alpha \Delta\sim \alpha Q_F+\alpha Q-\alpha \Delta
 \sim  F.
\]
Let $D'$ be the $\Q$-divisor as in \thmref{t:WatD}, and assume that $R$ is Gorenstein. 
Then $K_{E_0}\sim aD-D'$, and $\t R$ is Gorenstein if and only if 
$(aD-D')\sim (a\t D-D')$.
\end{proof}

\begin{thm}\label{t:Z=M}
There exists a weighted homogeneous singularity with resolution graph $\Gamma(a_1, \dots, a_m)$ such that
the maximal ideal cycle coincides with the fundamental cycle on the minimal good resolution.
\end{thm}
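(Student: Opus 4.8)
The plan is to construct the desired singularity explicitly by choosing an appropriate Pinkham-Demazure divisor using the two preparatory lemmas just proved. The target condition is $M_X = Z_X$, and by \lemref{l:M=Z} this is equivalent (in the case $\alpha \le e_m$) to finding an effective divisor $F$ on $E_0$ linearly equivalent to $D_\alpha = \alpha D$ whose support is disjoint from $\supp \Delta$. So the strategy is: rather than work with a fixed analytic structure, I would exploit the freedom in choosing $Q$ (equivalently $\tilde Q$ in \lemref{l:QF}) to arrange precisely such an $F$. Concretely, I would first handle the case $\alpha \le e_m$ by picking any effective divisor $F$ of degree $\deg \alpha D$ supported away from the finitely many points $P_{i\xi} \in \supp\Delta$ (possible since $E_0$ is a projective curve over $\C$, which has infinitely many points), and then invoking \lemref{l:QF} to produce a divisor $\tilde Q$ with $F \sim \alpha\tilde Q - \alpha\Delta$. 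Setting $\tilde D = \tilde Q - \Delta$ and $\tilde R = R(E_0,\tilde D)$ gives a weighted homogeneous singularity with the same resolution graph $\Gamma(a_1,\dots,a_m)$ (the graph depends only on the Seifert invariant, hence on the $P_i$, $\alpha_i$, $\beta_i$ and $g, c_0$, all of which are preserved), and by \lemref{l:M=Z} this singularity satisfies $M_X = Z_X$.

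The remaining case is $\alpha > e_m$. Here \thmref{t:BCImain} (3) shows that for the BCI structure one has $Z_X = \deg(\alpha D)\, E_0^* \ne M_X$, so the BCI model itself fails; but the fundamental cycle $Z_X$ is topological and in this regime $\cf_{E_0}(Z_X) = e_m < \alpha$. The point is that $M_X = Z_X$ requires a degree-$e_m$ divisor $D_{e_m}$ with a section, i.e. $h^0(\tilde D_{e_m}) \ne 0$, realized by a function whose exceptional part is exactly $L_{e_m} = Z_X$ and whose non-exceptional part meets $E_0$ away from $\supp\Delta$. I would therefore again use the freedom in $\tilde Q$: choose $F \ge 0$ of degree $\deg D_{e_m}$ disjoint from $\supp\Delta$ and arrange $D_{e_m} \sim F$ after replacing $Q$ by a suitable $\tilde Q$. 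One must check that this single linear-equivalence condition can be met while keeping $\Delta$ (hence the $\beta_i/\alpha_i$ and the graph) fixed; this is where a variant of \lemref{l:QF} for the degree $e_m$ in place of $\alpha$ is needed, and I expect the author to either reduce to $\alpha \le e_m$ by a uniform argument or to state the divisor-theoretic adjustment directly.

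The main obstacle I anticipate is the bookkeeping that guarantees the modified divisor $\tilde D$ still corresponds to the \emph{same} resolution graph $\Gamma(a_1,\dots,a_m)$: the graph is determined by the Seifert data $(g, c_0, (\alpha_i,\beta_i))$, and since $\tilde D$ changes only the ``$Q$-part'' and not the fractional parts $\beta_i/\alpha_i$ at the points $P_i$, the Seifert invariant is unchanged, but this needs to be stated carefully. A secondary subtlety is ensuring $\deg \tilde D > 0$ (required for the construction of \sref{ss:star} to yield a genuine singularity) and that the minimal good resolution of $\spec \tilde R$ is star-shaped with exactly $\Gamma(a_1,\dots,a_m)$; these follow from the general theory recalled in \thmref{t:TW} and \ssref{ss:WH} once $\tilde D$ is recognized as a valid Pinkham-Demazure divisor. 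Thus the proof is essentially a matter of assembling \lemref{l:M=Z} and \lemref{l:QF} with a suitable choice of support-avoiding effective divisor $F$, and the only real work is verifying that the genericity choice is always available and that the graph is preserved.
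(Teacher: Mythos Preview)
Your handling of the case $\alpha \le e_m$ is correct and matches the paper exactly: choose an effective $F$ of degree $\deg(\alpha D)$ supported off $\supp\Delta$, invoke \lemref{l:QF} to get $\tilde Q$ with $\alpha\tilde D \sim F$, and conclude $M_X = Z_X$ for $\spec R(E_0,\tilde D)$ via \lemref{l:M=Z}. This is the only case where any construction is needed.

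The gap is in your second case. You have the inequality backwards when reading \thmref{t:BCImain} (3): that theorem says $Z_X = M_X$ precisely when $e_m \le \alpha$, and $Z_X = \deg(\alpha D)\,E_0^* \ne M_X$ when $e_m > \alpha$. So when $\alpha > e_m$ (equivalently $e_m < \alpha$), the BCI singularity itself already satisfies $M_X = Z_X$; nothing further is required. Your proposed ``variant of \lemref{l:QF} for degree $e_m$'' is unnecessary, and the vague sentence ``I expect the author to either reduce to $\alpha \le e_m$ \ldots'' is replaced in the paper by the one-line observation that the BCI model already works. Once you correct the direction of the inequality, your two cases collapse to exactly the paper's dichotomy: $e_m \le \alpha$ (BCI suffices) versus $e_m > \alpha$ (use \lemref{l:M=Z} and \lemref{l:QF}).
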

\begin{proof}
Let $\V$ be a BCI singularity. If $e_m \le \alpha$, we have $M_X=Z_X$ by \thmref{t:BCImain}.
If $e_m>\alpha$, by \lemref{l:M=Z} and \ref{l:QF}, we can take a  Pinkham-Demazure divisor $\t D$ on $E_0$ so that $\spec R(E_0,\t D)$ satisfies the assertion.
\end{proof}

\section{Examples of singularities in $\cal X(\Gamma(2,3,3,4))$}\label{s:exBCI}

We study some special structures of weighted homogeneous singularities with resolution graph $\Gamma(2,3,3,4)$.
The tuple of integers $(a_1, a_2, a_3, a_4)=(2,3,3,4)$ is characterized by the properties that $a_1+\cdots+a_m$ ($a_i\ge 2$) is minimal such that the corresponding BCI singularity satisfies $E\ne E_0$ and $g=2$.

Let $\Gamma=\Gamma(2,3,3,4)$ and let $\overline{\cal X}(\Gamma)$ denote the set of weighted homogeneous singularities with resolution graph $\Gamma$. 
We shall show that the singularities in $\overline{\cal X}(\Gamma)$ which attain the maximal geometric genus $p_g(\Gamma)$ (see \defref{d:pgG}) are of hyperelliptic type, and obtain the equations for them.
Moreover, we classify the singularities in $\overline{\cal X}(\Gamma)$ with the property that the maximal ideal cycle coincides with the fundamental cycle.

In the following, we use the notation of \sref{s:BCI}.
Notice that the coefficients of the cycles $Z_X$, $L_n$, and $Z_{K_X}$ are determined by $\Gamma$.

First, we give the fundamental invariants of BCI singularities with resolution graph $\Gamma$ (cf. \sref{ss:BCISf}); these data and the following theorem are used in other subsections.

\begin{nota}
Let $\mult\V$ (resp. $\emb\V$) denote the multiplicity (resp. embedding dimension) of the singularity $\V$, namely, that of the local ring $\cO_{V,o}$.\end{nota}

\begin{thm}\label{t:localring}
Let $A:=\cO_{W,p}$ be the local ring of a $d$-dimensional Cohen-Macaulay complex space $W$ at $p\in W$. 
Then we have the following.
\begin{enumerate}
\item {\em (Abhyankar \cite{AbhIneq})} $\emb A \le \mult A + d -1$.
\item {\em (Sally \cite{sally.tangent})} If $A$ is Gorenstein and $\mult A\ge 3$, then  $\emb A \le \mult A + d -2$.
\item {\em (Serre \cite{Serre-projmod})} If $A$ is Gorenstein and $\emb A =d+2$, then $A$ is a complete intersection.
\end{enumerate}
\end{thm}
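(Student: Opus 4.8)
The three assertions are classical results in local algebra, attributed to the indicated authors, so the plan is to reduce each to a standard argument (or, in the paper, simply to cite the original source). Note that the residue field of $A=\cO_{W,p}$ is $\C$, hence infinite, so minimal reductions of $\m$ exist and are generated by systems of parameters; I will use this freely.

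For (1), I would argue via reductions and multiplicity. Since $A$ is Cohen--Macaulay, a minimal reduction $Q=(x_1,\dots,x_d)$ of $\m$ is generated by a system of parameters and satisfies $\mult A=\ell(A/Q)$. The filtration $A\supset \m\supset \m^2+Q$ gives
\[
\ell(A/Q)\ge 1+\dim_\C \m/(\m^2+Q)\ge 1+(\emb A-d),
\]
since the $d$ generators of $Q$ span at most a $d$-dimensional subspace of $\m/\m^2$. Rearranging yields $\emb A\le \mult A+d-1$.

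For (3), I would invoke the structure theory of codimension-two Cohen--Macaulay ideals. Passing to the completion (which preserves all the relevant invariants), write $\hat A=\C[[y_1,\dots,y_N]]/I$ with $N=\emb A$; the hypothesis $\emb A=d+2$ means $I$ has height $2$, so $\hat A$ is a codimension-two Cohen--Macaulay quotient. By the Hilbert--Burch theorem, $I$ is the ideal of maximal minors of some $n\times(n-1)$ matrix, and the Cohen--Macaulay type of $\hat A$ equals the last Betti number $n-1$. Since $A$ is Gorenstein, its type is $1$, forcing $n=2$; then $I$ is generated by two elements forming a regular sequence, so $A$ is a complete intersection.

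The main obstacle is part (2), Sally's refinement, which does not follow from the elementary counting in (1). Equality $\emb A=\mult A+d-1$ characterizes rings of \emph{maximal embedding dimension}, whose associated graded ring has a very rigid structure; Sally's analysis of the Hilbert function shows that in this extremal case the Cohen--Macaulay type exceeds $1$ as soon as $\mult A\ge 3$, so a Gorenstein ring (type $1$) cannot be extremal and must satisfy the strict inequality $\emb A\le \mult A+d-2$. Reproducing this Hilbert-function computation is the delicate step, so in the paper I would cite \cite{sally.tangent} for (2), and likewise \cite{AbhIneq} and \cite{Serre-projmod} for (1) and (3).
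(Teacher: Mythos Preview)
Your proposal is correct, and indeed the paper does exactly what you anticipate in your final paragraph: the theorem is stated with attributions to Abhyankar, Sally, and Serre and no proof is given at all---it is used as a black box. Your sketches for (1) via minimal reductions and for (3) via Hilbert--Burch are standard and sound, and your assessment that (2) requires Sally's Hilbert-function analysis and is best cited is accurate; these sketches simply go beyond what the paper itself provides.
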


\subsection{The BCI singularities}
\label{ss:BCI2334}
Assume that $(V,o)$ is a BCI surface singularity with $(a_1, \dots, a_4)=(2,3,3,4)$. Then $V$ can be defined by polynomials
\[
f_1:=x_1^2+x_2^3+p x_3^3, \quad f_2:=x_2^3 +x_3^3+x_4^4 \quad
(p \ne 0, 1).
\]
These are weighted homogeneous of $\deg f_i=\ell=12$ with respect to the weights 
\[
(\deg x_1, \dots, \deg x_4)=(e_{1}, \dots, e_{4})=(6,4,4,3).
\]
We also have $(\alpha_1, \dots, \alpha_4)=(1,1,1,2)$.
By \cite[6.3]{MO},  $\mult(V,o)=a_1a_2=6$.
Let $R=\C[x_1, \dots, x_4]/(f_1, f_2)$. It follows from \cite[3.1.6]{G-W} that
\[
a(R) =12+12-(6+4+4+3)=7.
\]
The Hilbert series of $R$ is 
\begin{equation}\label{eq:BCIH}
H(V,t)=\frac{(1-t^{12})^{2}}{(1-t^{3})(1-t^{4})^2(1-t^{6})}
=1+t^3+2 t^4+2 t^6+2 t^7+3 t^8+\cdots.
\end{equation}
By \proref{p:Hpg} (1),  we have
\[
p_g(V,o)=(2+2 t+2 t^3+t^4+t^7)|_{t=1}=8.
\]

From the result of \sref{ss:BCISf}, we have the resolution graph $\Gamma$ as \figref{fig:2334}.

\begin{figure}[htb]
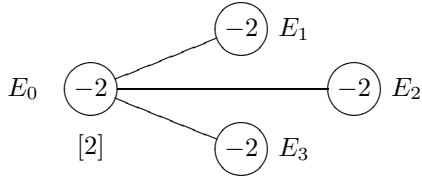

 \begin{center}
$
\xy
(-9,0)*+{E_0}; 
(0,0)*+{-2}*\cir<10pt>{}="E"*++!D(-2.0){[2]}; 
(20,8)*+{-2}*\cir<10pt>{}="A_1"; 
(27,8)*+{E_{1}}; 
(35,0)*+{-2 }*\cir<10pt>{}="B_1"; 
(42,0)*+{E_{2}}; 
(20,-8)*+{-2 }*\cir<10pt>{}="E_1";
(27,-8)*+{E_{3}}; 
\ar @{-} "E" ;"A_1"  
\ar @{-} "E" ;"B_1" 
\ar @{-} "E" ; "E_1" 
\endxy
$
\end{center}
\caption{\label{fig:2334} $\Gamma=\Gamma(2,3,3,4)$}
\end{figure}

Since $\alpha=2<e_4$, we have $Z_X\ne M_X$ by \thmref{t:BCImain}.
In fact, we have that
\[
Z_X=L_2=E+E_0=E_0^*, \quad M_X=Z^{(4)}=L_3=Z_X+E, \quad 
Z_{K_X}=4Z_X.
\]
The {\em fundamental genus} is $p_a(Z_X)=h^1(\cO_{Z_X})=1+Z_X(Z_X+K_X)/2=4$.
The {\em arithmetic genus} of $(V,o)$ is defined by
$p_a(V,o)=\max\defset{p_a(D)}{\text{$D>0$ is a cycle}}$.
It is known that $p_a(Z_X)\le p_a(V,o) \le p_g(V,o)$ (see \cite{wag.ell}). 
By Koyama's inequality (see \cite[Proposition 1.6]{Konno-pf2}), we have $p_a(V,o)=p_a(2Z_X)=5$.  

 The Pinkham-Demazure divisor $D$ and $D_n$ are as follows:
\begin{equation}\label{eq:PD}
D=Q-\sum _{i=1}^3 \frac{1}{2}P_i, \quad 
 D_n=nQ-\sum _{i=1}^3 \Ce{\frac{n}{2}}P_i,
\end{equation}
where $\cO_{E_0}(Q)=\cO_{E_0}(-E_0)$ and $\{P_i\}=E_0\cap E_i$. 
Since $\deg Q=2$, 
we have the following table; these are topological invariant and also used in \sref{ss:maxpg}--\ref{ss:M=Z}.
\begin{center}
$
\begin{array}{c|c|c|c|c|c|c|c}
\hline
n & 1 & 2& 3 & 4 & 5 & 6 & 7 \\
\hline
\deg D_n & -1 & 1 & 0 & 2 & 1 & 3 & 2 \\
\hline
\end{array}
$
\end{center}

The divisor $D$ satisfies the following analytic condition.
\begin{lem}\label{l:Pi}
$Q\sim 2P_i\sim K_{E_0}$ for $i=1,2,3$.
\end{lem}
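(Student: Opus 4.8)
The plan is to realize $E_0$ concretely as a hyperelliptic curve, to locate the points $P_i$ on it as Weierstrass points, and then to pin down the class of $Q$ using two coprime divisibility relations coming from \proref{p:monomials} and from the Gorenstein $a$-invariant.

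First I would produce an explicit affine model. Working in the chart $x_3\neq 0$ and setting $u=x_2/x_3$ and $y=x_1x_4^2/x_3^3$ (both of weight $0$, hence rational functions on $E_0$), the equations $f_1=f_2=0$ give $x_1^2/x_3^3=-(u^3+p)$ and $x_4^4/x_3^3=-(u^3+1)$; multiplying these yields $y^2=(u^3+1)(u^3+p)$. Since $p\neq 0,1$ the six roots on the right are distinct, so this defines a smooth genus-$2$ curve $C'$, and because $\C(u,y)\subseteq\C(E_0)$ with both curves of genus $2$, Riemann--Hurwitz forces the induced dominant map $E_0\to C'$ to have degree $1$; hence $E_0\cong C'$. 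On this model the hyperelliptic involution is $y\mapsto -y$, and $\lvert K_{E_0}\rvert$ is the $g^1_2$ cut out by the fibers of $u$. The three points $P_i$, defined by $x_4=0$, then satisfy $u^3=-1$ and therefore $y=0$, so they are three of the six Weierstrass points; consequently $2P_i\sim K_{E_0}$ for $i=1,2,3$.

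It remains to establish $Q\sim K_{E_0}$. From the degree table $\deg D_3=0$, so \proref{p:monomials}(2) gives $D_3\sim 0$, i.e. $3Q\sim 2(P_1+P_2+P_3)$; combined with $2P_i\sim K_{E_0}$ this reads $3(Q-K_{E_0})\sim 0$. On the other hand $R$ is a complete intersection, hence Gorenstein with $a(R)=7$, so \thmref{t:WatD} gives $K_{E_0}\sim D_7=7Q-4(P_1+P_2+P_3)$; substituting $2(P_1+P_2+P_3)\sim 3K_{E_0}$ turns this into $7(Q-K_{E_0})\sim 0$. Since $\gcd(3,7)=1$, I conclude $Q-K_{E_0}\sim 0$, whence $Q\sim K_{E_0}\sim 2P_i$ for each $i$, which is the assertion.

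The main obstacle is precisely the passage from $3Q\sim 3K_{E_0}$ to $Q\sim K_{E_0}$: on a genus-$2$ Jacobian multiplication by $3$ has a nontrivial kernel, so one cannot simply divide, and the identity genuinely requires a second torsion relation whose order is coprime to $3$. This is what the Gorenstein $a$-invariant supplies, and the care needed is to check that the two relations really are produced independently (the relation $D_3\sim 0$ from \proref{p:monomials}, and $K_{E_0}\sim D_7$ from the complete-intersection/Gorenstein structure) so that their coprimality can be exploited. A secondary point is the identification of the $P_i$ on the explicit model: one must verify that the locus $x_4=0$ meets $E_0$ only at the three smooth points with $u^3=-1$ (the origin being excluded by $f_1=f_2=0$), so that these are exactly the three Weierstrass points claimed.
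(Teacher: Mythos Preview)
Your argument is correct, and the identification of the $P_i$ as Weierstrass points is essentially the same as the paper's (the paper just observes directly that $(x_1:x_2:x_3:x_4)\mapsto (x_2:x_3)$ is the hyperelliptic double cover, without writing out an affine model).

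Where you differ is in the derivation of $Q\sim K_{E_0}$. You use the Weierstrass relation $2(P_1+P_2+P_3)\sim 3K_{E_0}$ twice to turn $D_3\sim 0$ and $K_{E_0}\sim D_7$ into the torsion relations $3(Q-K_{E_0})\sim 0$ and $7(Q-K_{E_0})\sim 0$, and then invoke $\gcd(3,7)=1$. The paper sidesteps this entirely by noticing a literal identity of divisors: since $D_7=7Q-4\sum_i P_i$ and $D_3=3Q-2\sum_i P_i$, one has $D_7-2D_3=Q$ \emph{on the nose}. Thus $K_{E_0}\sim D_7\sim D_7-2D_3=Q$ immediately, with no torsion issue and no need for the Weierstrass relation in this step. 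What you flagged as ``the main obstacle'' --- dividing by $3$ in the Jacobian --- simply does not arise. Your route works, but the paper's linear combination $D_7-2D_3$ is the cleaner maneuver and keeps the two halves of the lemma logically independent.
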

\begin{proof}
Since $a(R)=7$, by \thmref{t:WatD} and \proref{p:monomials} (2), 
\[
K_{E_0}\sim D_7 \sim D_7-2D_3=Q.
\]
Note that $E_0$ is a hyperelliptic curve with $g=2$.
From \remref{r:ZiHi}, we see that $\{P_1, P_2, P_3\}=\{f_1=f_2=x_4=0\}\subset \P(6,4,4,3)$. Thus, a double cover $E_0\to \P^1$ is given by $(x_1: x_2: x_3: x_4)\mapsto (x_2:x_3)$ and $P_i$ are its ramification points.
Hence $2P_i\sim K_{E_0}$. 
\end{proof}

Later, we shall see the variation of the Pinkham-Demazure divisor $D$ and corresponding singularities with $\Gamma=\Gamma(2,3,3,4)$. 

\subsection{Singularities with $p_g=p_g(\Gamma)$} 
\label{ss:maxpg}

Let $C$ be a nonsingular curve of genus two and $\{P_1, P_2, P_3\}\subset C$ a set of distinct three points.
Let $Q$ be a divisor on $C$ with $\deg Q=2$. 
We define $D$ and $D_n$ ($n\in \Z_{\ge 0}$) as in \eqref{eq:PD}. 
Suppose that $\V\in \overline{\cal X}(\Gamma)$ and the homogeneous coordinate ring $R$ of $(V,o)$ is expressed as 
$R=\bigoplus_{n\ge 0}H^0(D_n)T^n$, where $H^0(D_n)=H^0(C,\cO_C(D_n))$ (see \sref{ss:star}). 
For $n\in \Z_{\ge 0}$, let $R_n=H^0(D_n)T^n$. 
We identify $C$ with the central curve $E_0\subset E$.

\begin{lem}\label{l:gor}
The following are equivalent.
 \begin{enumerate}
\item $(V,o)$ is Gorenstein.
\item $K_{C}$ is linearly equivalent to $D_7$.
\item $h^0(D_7)=2$.
\end{enumerate}
In this case, we have $a(R)=7$.
\end{lem}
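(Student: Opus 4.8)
The plan is to prove the three equivalences by relating the Gorenstein condition to the explicit value $a(R)=7$ computed for the BCI singularity, using \thmref{t:WatD} as the central tool. The key observation is that for the graph $\Gamma(2,3,3,4)$ the relevant Seifert data are $(\alpha_1,\dots,\alpha_4)=(1,1,1,2)$, so the divisor $D'$ of \thmref{t:WatD} simplifies considerably: only $\alpha_4=2$ contributes, giving $D'=\tfrac12\bar P_4=\tfrac12(P_1+P_2+P_3)$ in the notation of \sref{ss:BCISf}. I would first record this and note $\deg D'=\tfrac32$.

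For the implication $(1)\Rightarrow(2)$, I would apply \thmref{t:WatD}: if $(V,o)$ is Gorenstein then $K_C\sim aD-D'$ for the integer $a=a(R)$. I would pin down $a$ by a degree count, using $\deg K_C=2g-2=2$ for the genus-two curve $C$, together with $\deg D$ and $\deg D'$; this forces $a=7$ (matching the BCI computation and giving the final sentence of the lemma). Then I would identify $aD-D'$ with $D_7$ on the nose. Here the point is that $D_7=7Q-\sum_{i=1}^3\Ce{7/2}P_i=7Q-4\sum P_i$, whereas $7D-D'=7Q-\tfrac{7}{2}\sum P_i-\tfrac12\sum P_i=7Q-4\sum P_i$, so the two $\Q$-divisors are literally equal, not merely linearly equivalent. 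Thus $K_C\sim D_7$.

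For $(2)\Rightarrow(3)$ I would simply take degrees and apply Riemann--Roch: if $K_C\sim D_7$ then $h^0(D_7)=h^0(K_C)=g=2$. For $(3)\Rightarrow(2)$, I would use that $\deg D_7=2=\deg K_C$ on a genus-two curve, where $h^0$ of a degree-two divisor equals $2$ precisely when that divisor is the canonical class (every effective degree-two divisor is a fibre of the hyperelliptic map, and only the canonical $g^1_2$ is a pencil); so $h^0(D_7)=2$ forces $D_7\sim K_C$. Finally $(2)\Rightarrow(1)$ is again \thmref{t:WatD} read in the reverse direction: from $K_C\sim D_7=7D-D'$ the criterion supplies the integer $a=7$ with $K_C\sim aD-D'$, hence $R$ is Gorenstein.

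I expect the main obstacle to be the clean verification that the $\Q$-divisor $aD-D'$ coincides with the integral divisor $D_7$, and more precisely the degree bookkeeping that isolates $a=7$; the ceilings $\Ce{n\beta_i/\alpha_i}$ in the definition of $D_n$ must be reconciled with the half-integer coefficients of $D$ and $D'$, and one must check this works out to an honest equality of divisors rather than just linear equivalence. The step $(3)\Rightarrow(2)$ also requires the genus-two geometric input that a degree-two divisor with $h^0=2$ is canonical, which I would justify via the hyperelliptic structure already noted in \lemref{l:Pi} (or directly from Clifford/Riemann--Roch on a curve of genus two). The remaining implications are formal consequences of \thmref{t:WatD} and Riemann--Roch.
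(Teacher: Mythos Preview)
Your proposal is correct and follows essentially the same route as the paper: the equivalence $(2)\Leftrightarrow(3)$ via the fact that a degree-two divisor on a genus-two curve has $h^0=2$ exactly when it is canonical, and $(1)\Leftrightarrow(2)$ via \thmref{t:WatD} together with the identification $7D-D'=D_7$. Your write-up simply makes explicit the degree computation pinning down $a=7$ and the equality $7D-D'=D_7$, which the paper leaves to the reader; one small cosmetic point is that in \sref{ss:maxpg} the relevant $\alpha_i$ are the Seifert branch data $(\alpha,\beta)=(2,1)$ for the three arms (not the BCI tuple $(1,1,1,2)$), though your resulting $D'=\tfrac12(P_1+P_2+P_3)$ is of course the same.
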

\begin{proof}
Since $g=g(C)=2$, for a divisor $F$ of degree $2$ on $C$, $h^0(F)=2$ if and only if $F\sim K_{C}$.
The assertion follows from \thmref{t:WatD}.
\end{proof}

\begin{nota}\label{n:rs}
Let $\cal R(C) \subset C$ be the set of ramification points of the double cover $C\to \P^1$ and $\sigma\: C\to C$ the hyperelliptic involution; we have $\cal R(C)=\defset{P\in C}{\sigma(P)=P}$. 
\end{nota}

From \exref{e:mpg}, we have the following.

\begin{prop}\label{p:maxpg1}
Assume that $P_1\in \cR(C)$, $P_2\in C\setminus \cR(C)$, $P_3=\sigma (P_2)$ and $Q=2P_1$.
Then 
\begin{equation}\label{eq:Dn}
 D_n\sim \begin{cases}
\frac{n}{2}P_1 & (\text{$n$ is even}) \\
\frac{n-3}{2}P_1 & (\text{$n$ is odd})
\end{cases}
\end{equation}
 and $p_g\V=p_g(\Gamma)$.
\end{prop}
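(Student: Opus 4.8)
The plan is to verify the two claims in \proref{p:maxpg1} directly from the hypotheses by exploiting the hyperelliptic geometry of $C$, and then invoke \exref{e:mpg} for the statement $p_g\V=p_g(\Gamma)$. First I would establish the linear equivalence \eqref{eq:Dn}. Since $P_2$ and $P_3=\sigma(P_2)$ are exchanged by the hyperelliptic involution, the divisor $P_2+P_3$ is a fiber of the double cover $C\to\P^1$, hence $P_2+P_3\sim 2P_1$ (any two points in a fiber, and any $2R$ with $R\in\cR(C)$, all represent the unique $g^1_2$). With $Q=2P_1$ and the formula $D_n=nQ-\sum_{i=1}^3\Ce{n/2}P_i$ from \eqref{eq:PD}, I would split into parity cases. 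For $n$ even, $\Ce{n/2}=n/2$, so $D_n=2nP_1-\tfrac n2(P_1+P_2+P_3)=\tfrac n2 P_1-\tfrac n2(P_2+P_3-P_1)$, and substituting $P_2+P_3\sim 2P_1$ gives $D_n\sim \tfrac n2 P_1-\tfrac n2 P_1 + \tfrac n2 P_1=\tfrac n2 P_1$, matching the first case. For $n$ odd, $\Ce{n/2}=(n+1)/2$, and the same substitution yields $D_n\sim \tfrac{n-3}{2}P_1$; this is a short computation using $2P_1\sim P_2+P_3$ repeatedly.

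Next I would confirm that the hypotheses of \proref{p:maxpg1} are exactly an instance of the hyperelliptic-type construction in \exref{e:mpg}, so that the conclusion $p_g\V=p_g(\Gamma)$ follows immediately. Here $m=1$ with $(\alpha_1,\beta_1)$ corresponding to the single branch of $\Gamma(2,3,3,4)$, and $k_1=3$ (the three points $P_1,P_2,P_3$), which is odd; the construction in the $k_1\not\in 2\Z$ case takes one ramification point $P=P_1$ together with the conjugate pair $P_2,\sigma(P_2)=P_3$, and sets $Q=c_0P$. I would check that with $c_0$ as dictated by $\Gamma$ the choice $Q=2P_1$ agrees (up to linear equivalence, which is all that matters for the graded ring and hence for $p_g$), and that $D_n\sim(\deg D_n)P_1$ holds — this is precisely \eqref{eq:Dn} rephrased, since $\deg D_n$ equals $n/2$ or $(n-3)/2$ according to parity, as recorded in the degree table of \sref{ss:BCI2334}. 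Once the data match \exref{e:mpg}, the equality $p_g\V=p_g(\Gamma)$ is exactly the conclusion stated there.

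The only genuinely non-formal point is justifying that the equality of $p_g$ with the maximum $p_g(\Gamma)$ follows from $D_n\sim(\deg D_n)P_1$, and this is where I would lean on Clifford's theorem as invoked in \exref{e:mpg}: for each $n$, $h^1(D_n)$ depends only on the linear equivalence class of $D_n$, and the hyperelliptic special divisors $(\deg D_n)P_1$ realize the maximal possible $h^1$ among all degree-$\deg D_n$ divisors on any genus-two curve compatible with $\Gamma$. I expect the bookkeeping in the parity computation for \eqref{eq:Dn} to be the main place where sign or index errors could creep in — in particular keeping track of $\Ce{n/2}$ versus $n/2$ and applying $P_2+P_3\sim 2P_1\sim K_C$ (cf.\ \lemref{l:Pi}) the correct number of times — but conceptually this step is routine. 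The substantive content is entirely contained in \exref{e:mpg} and the hyperelliptic relation among the three points, so no new estimate or construction is required beyond recognizing the configuration as a hyperelliptic type.
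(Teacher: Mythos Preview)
Your approach is correct and is exactly what the paper does: it simply invokes \exref{e:mpg}, recognizing the given $D$ as a hyperelliptic-type Pinkham--Demazure divisor with $D_n\sim(\deg D_n)P_1$. Your displayed intermediate step in the even case, $2nP_1-\tfrac n2(P_1+P_2+P_3)=\tfrac n2 P_1-\tfrac n2(P_2+P_3-P_1)$, is off by $nP_1$ (the correct reduction is $(3n/2)P_1-(n/2)(P_2+P_3)\sim(n/2)P_1$), but this is precisely the bookkeeping slip you anticipated and the final formulas and argument are right.
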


We can prove the converse of the above result.

\begin{prop}\label{p:maxpg2}
Assume that $p_g\V=p_g(\Gamma)$.
Then $D$ can be taken as in \proref{p:maxpg1}, namely, by suitable permutation of $P_i$'s, we have $P_1\in \cR(C)$, $P_2\in C\setminus \cR(C)$, $P_3=\sigma (P_2)$, and $Q\sim 2P_1$. 
\end{prop}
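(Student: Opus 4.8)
The plan is to prove the contrapositive-style converse by analyzing the constraints that the maximal-genus condition $p_g\V=p_g(\Gamma)$ places on the divisors $D_n$, and then decoding those constraints back into geometric information about the points $P_i$ and the divisor $Q$ on the hyperelliptic curve $C$ of genus two. By Pinkham's formula (\thmref{t:Pin}), $p_g\V=\sum_{n\ge 0}h^1(D_n)$, and by Clifford's theorem (as used in \exref{e:mpg}) each summand $h^1(D_n)$ is maximized precisely when the corresponding $D_n$ is as special as Clifford allows. So the first step is to consult the degree table: the only degrees where the complex structure can affect $h^1$ are those $n$ with $0\le \deg D_n\le 2g-2=2$, i.e.\ $n=2,3,4,5,7$ (degree $-1,0,2,1,2$ respectively). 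For each such $n$, I would compute what $h^1(D_n)$ equals in the maximal case and write down the linear-equivalence condition on $D_n$ that forces equality.

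First I would treat the degree-$0$ and degree-$1$ cases. For $n=3$ ($\deg D_3=0$), maximality forces $h^1(D_3)=h^0(D_3)=1$, hence $D_3\sim 0$; this is exactly the statement that the base locus data is trivial in degree $3$. For the odd degree-$1$ cases $n=5,7$ wait—$\deg D_7=2$, so the genuinely degree-$1$ case is $n=5$, where maximality forces $h^1(D_5)=1$ and hence $D_5\sim P$ for some Weierstrass (ramification) point $P\in\cR(C)$, since on a genus-two curve a degree-one divisor is special iff it is a single point and then automatically $h^0=1,h^1=1$. The crucial constraint, though, comes from the degree-$2$ cases $n=4,7$: on a genus-two curve a degree-two divisor $F$ has $h^1(F)=1$ iff $F\sim K_C$, and otherwise $h^1(F)=0$. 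Thus maximality forces $D_4\sim K_C$ and $D_7\sim K_C$.

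Next I would translate these equivalences into the explicit form of $Q$ and the $P_i$. Writing $D_n=nQ-\sum_i\Ce{n/2}P_i$ from \eqref{eq:PD}, the conditions $D_3\sim 0$ and $D_4\sim K_C$ give $3Q\sim 2(P_1+P_2+P_3)$ and $4Q\sim 2(P_1+P_2+P_3)+K_C$ wait, I must be careful with the ceilings: $\Ce{3/2}=2$ and $\Ce{4/2}=2$, so $D_3=3Q-2\sum P_i$ and $D_4=4Q-2\sum P_i$; subtracting gives $D_4-D_3=Q$, so $Q\sim K_C$ follows at once from $D_3\sim 0$, $D_4\sim K_C$. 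Combined with $D_3\sim 0$ this yields $3K_C\sim 2(P_1+P_2+P_3)$. Because every effective degree-two divisor linearly equivalent to $K_C$ on a hyperelliptic curve is a fiber $P+\sigma(P)$ of the double cover, and a ramification point $P$ satisfies $2P\sim K_C$, I can now run the divisor arithmetic: the relation $2(P_1+P_2+P_3)\sim 3K_C$ together with the degree-$5$ output $D_5\sim P$ (a Weierstrass point) pins down, after the hyperelliptic map analysis, that one of the $P_i$ must be a ramification point and the other two must be an antipodal pair $\{P_2,\sigma(P_2)\}$, with $Q\sim 2P_1$.

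The main obstacle I anticipate is the final bookkeeping in the previous paragraph: showing that the numerical/linear-equivalence relations force the \emph{specific} configuration $P_1\in\cR(C)$, $P_3=\sigma(P_2)$ rather than some other arrangement realizing the same degrees. The subtlety is that $h^1(D_n)$ only sees linear-equivalence classes, so I must rule out spurious solutions by exploiting the rigidity of the hyperelliptic structure—namely that the map $C\to\P^1$ is unique, that $K_C$ pulls back the hyperplane class, and that the only degree-two effective divisors in $|K_C|$ are the $g^1_2$-fibers. I would handle this by pushing all the relations down to $\P^1$ via $\sigma$: apply the involution to each equivalence, use $P+\sigma(P)\sim K_C$, and reduce the problem to a statement about points and divisors on $\P^1$ where linear equivalence is just equality of degrees. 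This should collapse the ambiguity and, after relabeling the $P_i$ by a permutation (as allowed in the statement), deliver exactly the configuration of \proref{p:maxpg1}, completing the proof.
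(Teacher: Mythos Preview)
Your overall strategy matches the paper's: use Clifford's theorem to conclude that $p_g\V=p_g(\Gamma)$ forces $h^0(D_n)=\Fl{\deg D_n/2}+1$ for $\deg D_n\le 2$, translate these into linear equivalences among $Q$ and the $P_i$, and read off the configuration. Two small corrections along the way. First, $\deg D_2=1$, not $-1$; the paper in fact works with $D_2\sim P_4$ rather than $D_5$, but since $D_5=D_2+D_3$ this is equivalent once $D_3\sim 0$. Second, your justification for ``$D_5\sim P$ with $P\in\cR(C)$'' is wrong: a degree-one divisor on a genus-two curve is special iff it is effective, with no Weierstrass condition. The conclusion that the point is Weierstrass happens to be correct, but it only follows after combining with $Q\sim K_C$ (from $D_4-D_3=Q$): then $2P_4\sim D_3+Q\sim K_C$.

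The genuine gap is the final bookkeeping. Your plan to ``push all relations down to $\P^1$'' and use that linear equivalence there is degree equality cannot work: the pushforward (or the averaging $D\mapsto D+\sigma(D)$) destroys precisely the distinction between an arbitrary degree-two divisor and one in $|K_C|$, which is what you need. The paper instead argues directly on $C$. From $Q\sim K_C$ and $D_2\sim P_4$ one gets $P_1+P_2+P_3\sim P_1+\sigma(P_1)+P_4$, hence $P_2+P_3\sim \sigma(P_1)+P_4$. Now either these two effective degree-two divisors are equal, in which case a relabeling finishes; or they are distinct but linearly equivalent, which on a genus-two curve forces both to lie in the unique $g^1_2=|K_C|$, giving $P_3=\sigma(P_2)$ and $P_1=P_4\in\cR(C)$. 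This uniqueness-of-$g^1_2$ dichotomy is the missing ingredient in your sketch.
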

\begin{proof}
By \proref{p:Hpg} (2) and Clifford's theorem (cf. \exref{e:mpg}),  we have 
\begin{equation}\label{eq:degDn}
h^0(D_n)=\Fl{\deg D_n /2}+1 \quad \text{if} \quad \deg D_n \le 2.
\end{equation}
Since $\deg D_2=1$ and $h^0(D_2)=1$, there exists a point $P_4\in C$ such that 
\begin{equation}\label{eq:D2}
D_2=2Q-(P_1+P_2+P_3)\sim P_4.
\end{equation}
Since $\deg D_3=0$ and $h^0(D_3)=1$, it follows that
\begin{equation}
\label{eq:D3}
D_3=3Q-2(P_1+P_2+P_3)\sim 0.
\end{equation}
From \eqref{eq:D2} and \eqref{eq:D3}, we have $D_4\sim 2P_4\sim Q$.
Since $h^0(D_4)=2$, we have $P_4\in \cR(C)$.
Therefore, $P+\sigma(P)\sim Q$ for any $P\in C$. 
It follows from \eqref{eq:D2} that
\[
P_1+P_2+P_3 \sim Q+P_4\sim P_1+\sigma(P_1)+P_4.
\]
Hence $P_2+P_3 \sim \sigma (P_1)+P_4$.
If $P_2+P_3 = \sigma (P_1)+P_4$, we are done (e.g., if $P_2=P_4$, then $P_2\in \cR(C)$, $\sigma(P_1)=P_3\not\in \cR(C)$).
If $P_2+P_3 \not= \sigma (P_1)+P_4$, then $h^0(\sigma (P_1)+P_4)=2$, 
and this implies that $P_1=P_4$ and $P_3=\sigma (P_2)$.
\end{proof}

We shall give the fundamental invariants of these singularities.

For an invertible sheaf $\cal L$ on $X$, we say that $P\in X$ is a {\em base point} of $\cal L$ if $\cal L$ is not generated by its global sections at $P$.

\begin{lem}[cf. {\cite[2.7]{wag.ell}, \cite[4.6]{chap}}]
\label{l:multM2}
If $\cO_X(-M_X)$ has no base points, then $\mult\V=-M_X^2$.

\end{lem}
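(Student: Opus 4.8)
The plan is to realize the multiplicity as the local intersection number at $o$ of two generic hyperplane sections and to evaluate that number on $X$, where the base-point-freeness hypothesis forces all the cross terms to collapse to $-M_X^2$.

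First I would reduce to intersection theory. Since the residue field $\C$ is infinite and $\cO_{V,o}$ is a two-dimensional Cohen--Macaulay local ring, $\m$ admits a minimal reduction generated by two generic elements $h_1,h_2\in\m$; these form a system of parameters, so $\mult\V=\ell\bigl(\cO_{V,o}/(h_1,h_2)\bigr)=(\di_V(h_1)\cdot\di_V(h_2))_o$, the local intersection number at $o$. Next I would invoke the hypothesis: because $\cO_X(-M_X)$ has no base points we have $\m\cO_X=\cO_X(-M_X)$, so for generic $h_i$ the exceptional part is exactly $(h_i)_E=M_X$; writing $\di_X(h_i)=M_X+H_i$ with $H_i$ the effective non-exceptional part, base-point-freeness moreover lets me choose $h_1,h_2$ so that $H_1$ and $H_2$ share no point of $E$, i.e.\ their genuine intersections lie over points of $V$ other than $o$. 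The surjectivity statement of \thmref{t:TW} is what produces the requisite global sections realizing these generic choices.

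Then I would transport the count to $X$. Every intersection point lying over $o$ sits on $E$, so $(\di_V(h_1)\cdot\di_V(h_2))_o$ equals the sum of the local intersection numbers of $\di_X(h_1)$ and $\di_X(h_2)$ along $E$. Since $\di_X(h_1)$ is the divisor of a function and $M_X$ is supported on the compact, negative-definite cycle $E$, one has the global identity $\di_X(h_1)\cdot M_X=0$; as $M_X$ and $H_2$ have no common component and $H_1\cap H_2\cap E=\emptyset$, the count reduces to $M_X\cdot H_2$. Finally, from $\di_X(h_2)\cdot E_i=0$ I get $H_2\cdot E_i=-M_X\cdot E_i$ for every $i\in\cI$, whence $M_X\cdot H_2=-M_X^2$, giving $\mult\V=-M_X^2$.

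I expect the main obstacle to be the bookkeeping in the third step: the two divisors $\di_X(h_i)$ share the component $M_X$, so one cannot naively assign a local intersection number there. The clean way around this is to replace the ill-defined local contribution of the shared component by the \emph{global} identity $\di_X(h_1)\cdot M_X=0$, valid because $h_1$ is a function and $M_X$ is compact; base-point-freeness is then precisely what guarantees that the remaining honest intersection points of $H_1$ and $H_2$ do not lie over $o$ and may be discarded. Checking that the generic choices can be made simultaneously---exceptional part equal to $M_X$, transversal meeting with $E$, and disjointness of $H_1,H_2$ on $E$---is routine once $\cO_X(-M_X)$ is base-point-free, so the only real content is the intersection-number manipulation above.
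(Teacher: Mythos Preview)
The paper does not supply its own proof of this lemma; it merely records the statement and cites Wagreich \cite[2.7]{wag.ell} and Reid \cite[4.6]{chap}. Your argument is the standard one found in those references: realize $\mult\V$ as the local intersection number of two generic elements of a minimal reduction of $\m$, use $\m\cO_X=\cO_X(-M_X)$ (which, as you observe, follows from base-point-freeness together with $H^0(\cO_X(-M_X))=\m$), write $\di_X(h_i)=M_X+H_i$, and compute $(C_1\cdot C_2)_o=H_1\cdot\pi^*C_2=H_1\cdot M_X=-M_X^2$ via $\pi^*C_2\cdot M_X=0$. This is correct.

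One small correction: your invocation of \thmref{t:TW} is out of place. That theorem concerns the star-shaped setting and the surjectivity of the restriction $H^0(\cO_X(-L_n))\to H^0(\cO_{E_0}(D_n))$; it plays no role here. The lemma is stated for an arbitrary normal surface singularity, and the generic choices you need (exceptional part exactly $M_X$, and $H_1\cap H_2\cap E=\emptyset$) come directly from the definition of $M_X$ and from base-point-freeness of $\cO_X(-M_X)$, without any appeal to \thmref{t:TW}. Simply delete that sentence and the proof stands.
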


\begin{prop}\label{p:maxpg3}
Assume that $p_g\V=p_g(\Gamma)$. Then we have the following.
\begin{enumerate}
\item $M_X=Z_X+E_1$, where $P_1$ is taken as in \proref{p:maxpg1}.
Furthermore, $\cO_X(-M_X)$ has no base points and $\mult(V,o)=4$.
\item  $p_g\V=10$.
\item $(V,o)$ is a complete intersection singularity defined as 
\[
V=\defset{(x,y,z,w)\in\C^4}{y^2-xz=w^2-h_5(x^2,z)=0},
\]
where $h_5$ is a homogeneous polynomial of degree $5$.
This is a weighted homogeneous singularity of weight type $(2,3,4,10; 6,20)$.
\end{enumerate}
\end{prop}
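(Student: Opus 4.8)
The plan is to establish the three assertions in the order (2), (1), (3), since the value of $p_g$ and the cycle $M_X$ both feed into the final ring-theoretic description; throughout I would use that every $D_n$ is linearly equivalent to a multiple of the single Weierstrass point, $D_n\sim d_nP_1$ with $d_n=\deg D_n$ and $2P_1\sim K_{E_0}$ (\proref{p:maxpg1}). For (2) I feed these classes into Pinkham's formula $p_g\V=\sum_{n\ge0}h^1(D_n)$ (\thmref{t:Pin}): since $E_0$ has genus $2$ and $P_1$ is a Weierstrass point, each $h^0(d_nP_1)$ is immediate and Riemann--Roch gives $h^1(D_n)=h^0(D_n)-d_n+1$. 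One checks $h^1(D_n)=0$ for $n\ge6$ except $n=7$, and the nonzero values for $n=0,\dots,7$ form the sequence $2,2,1,2,1,1,0,1$, summing to $p_g\V=10$.

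For (1) the crux is the identity $(x)_E=Z_X+E_1$ for the generator $x$ spanning the one-dimensional $R_2$. Here $Z_X=L_2=E_0^*=2E_0+E_1+E_2+E_3$ with $L_2\cdot E_j=0$ for $j\ge1$, and since $D_2\sim P_1$ has $h^0=1$, the restricted section $\bar x\in H^0(\cO_{E_0}(D_2))$ vanishes exactly at $P_1=E_0\cap E_1$. The decisive local point is that $\cO_X(-L_2)|_{E_1}$ has degree $-L_2\cdot E_1=0$, so $x|_{E_1}$ is a \emph{constant} section of $\cO_{\P^1}$ whose value at $P_1$ equals $\bar x(P_1)=0$; hence $x|_{E_1}\equiv0$ and $\cf_{E_1}((x)_E)\ge2$, while the degree of $\bar x$ fixes this coefficient at $2$ and leaves $\cf_{E_2}=\cf_{E_3}=1$. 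For the reverse inequality every minimal generator of $\m$ has $\ord_{E_1}\ge2$ (for degree $\ge3$ this follows from $\cf_{E_1}(L_n)\ge\lceil n/2\rceil\ge2$, and for $x$ from the above), so $M_X=Z_X+E_1$. To see $\cO_X(-M_X)$ is base-point free I use $x,z$: as a section of $\cO_X(-M_X)$, $x$ vanishes only along its horizontal part, meeting $E$ in two points of $E_1$ distinct from $P_1$, while $z$ restricts on $E_1\cong\P^1$ to a section of $\cO(2)$ with a double zero at $P_1$, so they share no zero on $E_1$; as $x$ is nonvanishing on $E_0,E_2,E_3$, there are no base points. Then \lemref{l:multM2} yields $\mult\V=-M_X^2=-(E_0^*+E_1)^2=4$.

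For (3) I first note $h^0(D_7)=2$, so $R$ is Gorenstein with $a(R)=7$ (\lemref{l:gor}). Comparing $\dim R_n=h^0(d_nP_1)$ with the subring $\C[x,y,z]/(y^2-xz)$ shows the generators lie exactly in degrees $2,3,4,10$: the relation $y^2=xz$ is forced in degree $6$ by a three-into-two dimension drop, the two rings agree through degree $9$, but $\dim R_{10}=h^0(5P_1)=4$ exceeds the three available monomials and produces a new generator $w$. Thus $\emb\V=4$, two more than $\dim\V=2$, and Serre's criterion (\thmref{t:localring}(3)) makes $\V$ a complete intersection, with relation degrees $6$ and $20$ forced by $a(R)=d_1+d_2-19=7$. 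Reducing a degree-$20$ element modulo $y^2-xz$ leaves $y$-degree $\le1$; a surviving $y$-term would need $3+2a+4b=20$, i.e.\ $2a+4b=17$, which is impossible, so the second relation is $w^2=h_5(x^2,z)$ with $h_5$ homogeneous of degree $5$, exhibiting $\V$ as a double cover of the $A_1$-singularity $\{y^2=xz\}$.

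The main obstacle is the cycle identity in (1): the temptation is to read off $(x)_E=Z_X$ from $\cf_{E_0}(M_X)=2$, whereas in fact the zero of $\bar x$ at the node $P_1=E_0\cap E_1$ is \emph{absorbed} into $E_1$, raising its coefficient to $2$, rather than produced by a horizontal branch through the node. The constant-section argument on $E_1$ is precisely what excludes the latter and makes $M_X=Z_X+E_1$ rigorous.
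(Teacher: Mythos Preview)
Your argument is correct and the overall architecture matches the paper's, but a few steps are packaged differently and a couple of routine normalisations are left implicit.

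For (1), the paper restricts $\di_X(h)$ to $E_0$ and reads off $F=E_1$ from the fact that the only effective divisor linearly equivalent to $P_1$ is $P_1$ itself, together with the observation that the non-exceptional part of a homogeneous function cannot pass through a node $P_i$. Your ``constant section on $E_1$'' argument is a neat alternative: since $\mathcal O_X(-Z_X)|_{E_1}\cong\mathcal O_{\mathbb P^1}$ and the value at $P_1$ agrees with that of $x|_{E_0}$, vanishing is forced. This actually bypasses the homogeneity remark about $H$ avoiding nodes, because once $\mathrm{cff}_{E_1}(F)\ge 1$ the degree-one constraint $(F+H)|_{E_0}=P_1$ pins everything down. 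For base-point freeness both proofs use a suitable degree-$4$ element; you should say explicitly that $z$ is chosen with $\bar z(P_1)\ne 0$ (possible since $|D_4|=|K_C|$ is free), since otherwise $z|_{E_1}\equiv 0$.

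For (2), the paper compares Hilbert series with the BCI case and invokes \proref{p:Hpg}; your direct evaluation of $\sum h^1(d_nP_1)$ via Riemann--Roch is equally valid and arguably more transparent.

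For (3), the paper identifies the degrees $6$ and $20$ by successive Hilbert-series comparisons and then \emph{constructs} the generators explicitly from the double cover $E_0\to\mathbb P^1$, verifying $y^2=xz$ and $w^2=h_5(x^2,z)$ by hand. You instead derive the relation degrees from $a(R)=7$ and obtain the shape of the equations by elimination. Two small points are glossed over: the degree-$6$ relation is a priori $y^2=ax^3+bxz$, and one must check $b\ne 0$ (e.g.\ $y^2=ax^3$ is impossible by comparing $E_2$-coefficients of the exceptional parts) before replacing $z$ by $z+(a/b)x^2$; and in degree $20$ one should eliminate the $w$-linear terms $w\cdot\{x^5,x^3z,xz^2\}$ by completing the square and note that the $w^2$-coefficient is nonzero (else $w$ would not be a genuine fourth generator). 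With these routine normalisations your argument is complete.
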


\begin{proof}
Assume that $D$ is as in \proref{p:maxpg1}.
It follows from \lemref{l:gor} that $\V$ is Gorenstein, because $K_{C}\sim 2P_1\sim D_7$.

(1) Since $h^0(D_2)>0$, there exists a homogeneous function $h\in R_2$ such that $\di_X(h)=Z_X+F+H$, where $F$ is a cycle satisfying $0\le F \le E_1+E_2+E_3$ and $H$ is the non-exceptional part.
Note that any point of $H\cap E$ is in $E_0\setminus \{P_1, P_2, P_3\}$ or $(E_1\cup E_2\cup E_3)\setminus E_0$, because $h$ is homogeneous.
Since 
\[
0\sim \di_X(h)|_{E_0} \sim -D_2+(F+H)|_{E_0}\sim -P_1+(F+H)|_{E_0},
\]
we have $F\cap E_0=\{P_1\}$ and $H\cap E_0=\emptyset$; 
thus $F=E_1$ and $E\cap H\subset E_1\setminus E_0$.
Since  $\cf_{E_1}(L_n)\ge 2$ for all $n\ge 3$, we have that $M_X=Z_X+E_1$ and $\cO_X(-M_X)$ is generated by global sections outside $E_1\cap H$. 
Since $L_4=2E_0^*$ and $D_4\sim 2P_0$ for any $P_0\in \cR(C) \setminus \{P_1\}$, there exists $g\in R$ such that $\di_X(g)=L_4+H'$ where $H'$ intersects $E_0$ only at $P_0$ (cf. the proof of \lemref{l:M=Z}).
Since $\cf_{E_1}(M_X)=\cf_{E_1}(L_4)=2$ and $L_4E_1=0$, 
 $\cO_X(-M_X)$ has no base points. 
Hence $\mult(V,o)=-(M_X)^2=4$ by \lemref{l:multM2}.

(2) Let $(V_0,o)\in \overline{\cal X}(\Gamma)$ be a BCI singularity.
Since $\deg D_n\ge 3$ for $n\ge 8$, $h^0(D_n)$ with $n\ge 8$ is independent of the complex structure of the pair $(C,D)$.
By \eqref{eq:BCIH} and \eqref{eq:degDn}, we  have the Hilbert series $H(V,t)$ of $R=R\V$:
\begin{align}
\begin{split}
\label{eq:maxH}
H(V,t)&=H(V_0,t)+t^2+t^5 
=\frac{\left(1-t^6\right) \left(1-t^{20}\right)}{\left(1-t^2\right) \left(1-t^3\right)
   \left(1-t^4\right) \left(1-t^{10}\right)} \\
&=1+t^2+t^3+2 t^4+t^5+2 t^6+2 t^7+3 t^8+2 t^9+4 t^{10}+\cdots.\end{split}
\end{align}
By \proref{p:Hpg} (2), $p_g\V=p_g(V_0,o)+2=10$.

(3) From \eqref{eq:maxH}, we have the following functions belong to a minimal set of homogeneous generators of $\C$-algebra $R$:
\[
x=f_2T^2\in R_2, \ \ y=f_3T^3\in R_3, \ \ z=f_4T^4\in R_4
\]
such that $\di_{E_0}(f_i)\ge D_i$.
Since $x^3,y^2,xz\in H^0(D_6)T^6$ and $h^0(D_6)=2$, 
we have a relation $r_6(x,y,z)=0$ at degree $6$.
Let $\C[X,Y,Z]$ be the polynomial ring with $(\deg X, \deg Y, \deg Z)=(2,3,4)$.
The difference between the Hilbert series of $R$ and that of the quotient ring $\C[X,Y,Z]/(r_6(X,Y,Z))$ is
\[
H(V,t)-\frac{(1-t^6)}{(1-t^2)(1-t^3)(1-t^4)}
=t^{10}+\cdots.
\]
Hence we have an element $w\in R_{10}$ such that 
$\{x,y,z,w\}$ is a subset of a minimal set of homogeneous generators of $R$.
However, since $\V$ is Gorenstein and $\mult\V=4$, it follows from \thmref{t:localring} that $R$ is a complete intersection generated by just $x,y,z,w$ as $\C$-algebra.
Let $F(t)$ be the Hilbert series of $\C[X,Y,Z,W]/(r_6(X,Y,Z))$, where $\deg W=10$.
Then 
\[
H(V,t)-F(t) =-t^{20}+\cdots.
\]
Hence we have a relation $r_{20}(x,y,z,w)=0$ at degree $20$.
Then the natural $\C$-homomorphism 
\[
S:=\C[X,Y,Z,W]/(r_6(X,Y,Z), r_{20}(X,Y, Z,W)) \to R
\]
induced by $(X,Y,Z,W)\mapsto (x,y,z,w)$ is surjective and the Hilbert series of $S$ coincides with $H(V,t)$.
Hence $S\cong R$.

Next we consider the equations.
Suppose that $\phi\: E_0\to \P^1$ is a double cover such that $\phi(P_1)=\{x_0=0\}$ and $\phi(P_i)=\{x_1=0\}$ ($i=2,3$), where $x_0$ and $x_1$ are the homogeneous coordinates of $\P^1$.
Then $E_0$ can be defined by the equation $x_2^2=x_0h_5(x_0,x_1)$, where $h_5(x_0,x_1)$ is a homogeneous polynomial of degree $5$ such that $h_5(1,0)h_5(0,1)\ne 0$; the branch locus of the covering is $\{x_0h_5(x_0,x_1)=0\}\subset \P^1$.
Now, we can put $x=x_0x_1$, $y=x_0x_1^2$, $z=x_0x_1^3$, $w=x_0^2x_1^5x_2$. Then we have the relations
\[
y^2=x_0^2x_1^4=xz, \ \
w^2=h_5(x_0,x_1)(x_0 x_1^2)^5=h_5(x^2,z).
\qedhere
\]
\end{proof}

\subsection{Singularities with $M_X=Z_X$}
\label{ss:M=Z}
We classify the singularities $(V,o)\in \overline{\cal X}(\Gamma)$ with property that $M_X=Z_X$.
We use the notation of the preceding subsection.

\begin{prop}\label{p:M=Z2334}
We have the following.
\begin{enumerate}
\item $M_X=Z_X$ if and only if there exists a point $P_4\in C\setminus \{P_1, P_2, P_3\}$ such that 
$D_2 \sim P_4$; 
if this is the case,  $D_7\sim 4P_4-Q$. 

\item  Assume that $M_X=Z_X$  and that $x\in R_2$ and $y\in R_m$ belong to a minimal set of homogeneous generators of the $\C$-algebra $R$, where $m$ is the minimum of the degrees of those generators except for $x$.
If $P_4$ is not a base point of $H^0(D_m)$, then $\mult\V=m$.
\end{enumerate}
\end{prop}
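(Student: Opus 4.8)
The plan is to handle the two parts separately: part~(1) is a direct specialization of \lemref{l:M=Z}, while part~(2) is an intersection-theoretic computation of the multiplicity.

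For part~(1), note that here $\alpha=2\le e_m$, so \lemref{l:M=Z} applies and $M_X=Z_X$ is equivalent to the existence of an effective $F$ on $E_0$ with $D_\alpha=D_2\sim F$ and $\supp F\cap\supp\Delta=\emptyset$. Since $\deg D_2=1$ and $\supp\Delta=\{P_1,P_2,P_3\}$ (here $\Delta=\tfrac12\sum_{i=1}^3P_i$), any effective $F\sim D_2$ is a single point $P_4$, and the support condition reads $P_4\notin\{P_1,P_2,P_3\}$; this is the stated equivalence. For the supplementary relation I substitute $D_2=2Q-(P_1+P_2+P_3)\sim P_4$, i.e. $P_1+P_2+P_3\sim 2Q-P_4$, into $D_7=7Q-4(P_1+P_2+P_3)$ to obtain $D_7\sim 7Q-4(2Q-P_4)=4P_4-Q$.

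For part~(2) I first record the geometry of $x$ and $y$ on $X$. Because $\cf_{E_0}(M_X)=2=\min\{n:H^0(D_n)\ne0\}$ and, $\V$ being weighted homogeneous, the restriction $H^0(\cO_X(-L_2))\to H^0(D_2)$ is surjective (\thmref{t:TW}), I may take $x\in R_2$ with $(x)_E=M_X=Z_X=E_0^*$ and $\di_X(x)=M_X+H_x$, where the non-exceptional part $H_x$ meets $E$ only at $P_4$ (indeed $H_x|_{E_0}=D_2\sim P_4$ avoids $\supp\Delta$, while $H_xE_i=-M_XE_i=0$ along the legs). The hypothesis that $P_4$ is not a base point of $H^0(D_m)$, together with the same surjectivity in degree $m$, lets me choose $y\in R_m$ with $(y)_E=L_m$ whose non-exceptional part $H_y$ meets $E_0$ away from $P_4$; hence $H_x\cap H_y=\emptyset$. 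The multiplicity is then read off from the local intersection number of the curves $\{x=0\},\{y=0\}$ at $o$, which equals $\di_X(x)\cdot\di_X(y)$. Expanding,
\[
\di_X(x)\cdot\di_X(y)=(M_X+H_x)(L_m+H_y)=M_XL_m+M_XH_y+H_xL_m+H_xH_y,
\]
I evaluate each term: from $M_X=E_0^*$ one gets $M_XL_m=-\cf_{E_0}(L_m)=-m$; from the fact that $\di_X(x),\di_X(y)$ are pullbacks of principal divisors (so meet every cycle in $0$) one gets $M_XH_y=-M_XL_m=m$ and $H_xL_m=-M_XL_m=m$; and $H_xH_y=0$ by disjointness. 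The total is $m$. Since $R$ is a normal, hence Cohen--Macaulay, two-dimensional graded domain, $(x,y)$ is a system of parameters and $e((x,y))=\operatorname{length}(R/(x,y))=\di_X(x)\cdot\di_X(y)=m$; so it remains only to prove that $(x,y)$ is a \emph{reduction} of $\m$, which yields $\mult\V=e(\m)=e((x,y))=m$.

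The main obstacle is exactly this last step, which is also where the hypothesis is genuinely used. Note that \lemref{l:multM2} does not apply directly, since $\cO_X(-M_X)$ has the base point $P_4$. I plan to argue that $P_4$ is its \emph{only} base point over $o$: at a generic point of $E_0$ and along each leg $E_i$ the element $x$ already generates $\m\cO_X$ (as $H_xE_i=0$), so no other base points occur. Near $P_4$, in local coordinates $(t_0,u)$ with $t_0$ cutting $E_0$ and $u$ cutting $H_x$, one has $x\sim t_0^2u$ and $y\sim t_0^m$ (as $H_y$ avoids $P_4$), so $\m\cO_X=(x,y)\cO_X=t_0^2(u,t_0^{m-2})$ there; blowing up $\m$ resolves $(u,t_0^{m-2})$ and makes $H_x,H_y$ disjoint, so $x,y$ generate $\m\cO_W$ and form a reduction. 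Verifying carefully that no generator of lower local order is hidden near $P_4$, and that $y$ indeed resolves the base point, is the delicate point; everything else is bookkeeping with the intersection pairing and \lemref{l:M=Z}.
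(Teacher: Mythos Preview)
Your proof of part~(1) matches the paper's (you also supply the verification of $D_7\sim 4P_4-Q$, which the paper leaves implicit). For part~(2) both arguments rest on the same local picture at $P_4$: in coordinates with $E_0=\{u=0\}$ and $H_x=\{v=0\}$ one has $x=u^2v$, $y=u^m$, and $\m\cO_{X,P_4}=(x,y)\cO_{X,P_4}$; the paper asserts this just as you do. From there the packaging differs. The paper performs $m-2$ successive point blow-ups along the proper transform of $H_x$ to obtain $Y\to X$ on which $\cO_Y(-M_Y)$ is globally generated, and then invokes \lemref{l:multM2} to read off $\mult\V=-M_Y^2=-Z_X^2+(m-2)=m$. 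You instead compute $\di_X(x)\cdot\di_X(y)=m$ directly, identify it with $e((x,y))=\operatorname{length}(R/(x,y))$ via Cohen--Macaulayness, and conclude by showing $(x,y)$ is a reduction of $\m$. The two routes are equivalent: your expansion $-m+m+m+0$ is precisely what one obtains by tracking $-M_Y^2$ through the blow-ups. Your phrasing has the merit of making transparent why only $x$ and $y$ are needed (every homogeneous $h_k$ with $k<m$ lies in $\C[x]$, while every $h_k$ with $k\ge m$ lies in $(u^m)=(y)$ at $P_4$), whereas the paper's blow-up makes the resolution of the base locus explicit. One minor caution: your assertion $(y)_E=L_m$ need not hold literally, since the restriction of $y$ to $E_0$ could vanish at some $P_i$ and force extra coefficients on a leg; but since $M_X=E_0^*$ kills the legs and $H_x$ meets $E$ only at $P_4$, your intersection computation is unaffected.
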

\begin{proof}
(1) 
The equivalence follows from \lemref{l:M=Z}. 

(2) We have $\di_X(x)=Z_X+H$, where $H$ is the non-exceptional part. 
Since $H\cap E=\{P_4\}$,  $\cO_X(-Z_X)$ has just a base point $P_4$.
Assume that $u,v$ are the local coordinates at $P_4\in X$ such that $E_0=\{u=0\}$ and $H=\{v=0\}$.
By the assumption, we may also assume that $x=u^2v$ and $y=u^m$.
Note that $m\ge 3$ since $h^0(D_2)=1$. 
Then, at $P_4\in X$,  $\m\cO_X=(u^2v,u^m)\cO_X=(v,u^{m-2})\cO_X(-Z_X)$, where $\m\subset \cO_{V,o}$ is the maximal ideal.
Therefore, the base point of $\cO_X(-Z_X)$ is resolved by the composition $Y\to X$ of $m-2$ blowing-ups at the intersection of the exceptional set and the proper transform of $H$.
Then the maximal ideal cycle $M_Y$ on $Y$ is the exceptional part of $\di_Y(x)$ and by \lemref{l:multM2}, $\mult (V,o)=-M_Y^2=-Z_X^2+(m-2)=m$.
\end{proof}

\begin{rem}\label{r:minmult}
The proof of \proref{p:M=Z2334} shows that $\mult (W,o)\ge -Z_X^2+1=3$ for any normal surface singularity $(W,o)$ with resolution graph $\Gamma$.
\end{rem}

\begin{lem}\label{l:4pts}
Let $P\in C$.
\begin{enumerate}
\item $P\not\in \cR(C)$ if and only if the linear system $|3P|$ is free.
\item There exist distinct three points $A_1, A_2, A_3\in C$ such that $3P\sim \sum_{i=1}^3A_i$. For such points, $P\in \cR(C)$ if and only if $P\in \{A_1, A_2, A_3\}$.
\end{enumerate}
\end{lem}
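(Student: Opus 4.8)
The plan is to reduce everything to two elementary facts about the genus-two curve $C$: its unique $g^1_2$ is the canonical system $|K_C|$, so a divisor $F$ of degree $2$ satisfies $h^0(F)=2$ exactly when $F\sim K_C$ and $h^0(F)=1$ otherwise; and $\cR(C)=\defset{P\in C}{2P\sim K_C}$, with $P+\sigma(P)\sim K_C$ for every $P$. The numerical input is Riemann--Roch together with Serre duality: since $\deg(3P)=3>2=\deg K_C$ we get $h^1(3P)=h^0(K_C-3P)=0$, hence $h^0(3P)=3+1-g=2$ for every $P$. Thus $|3P|$ is always one-dimensional of degree three, and the whole lemma concerns whether it has base points and how its members decompose.

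For part (1), I would compare $h^0(3P)$ with $h^0(2P)$, where Riemann--Roch gives $h^0(2P)=1+h^0(K_C-2P)$, so $h^0(2P)=2$ if $P\in\cR(C)$ and $h^0(2P)=1$ otherwise. If $P\in\cR(C)$ then $h^0(2P)=2=h^0(3P)$, and taking $R=P$ in the base-point criterion $h^0(3P-R)=h^0(3P)$ shows $P$ itself is a base point, so $|3P|$ is not free. If $P\notin\cR(C)$ then $h^0(2P)=1<h^0(3P)$, so there is a section of $\cO_C(3P)$ whose pole at $P$ has order exactly three; the associated member $A\in|3P|$ then satisfies $\ord_P(A)=0$, i.e.\ $P\notin\supp A$. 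Since the section $1$ gives the member $3P$, which is supported only at $P$, the base locus of $|3P|$ lies in $\{P\}\cap\supp A=\emptyset$, and $|3P|$ is free.

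For part (2), the existence of three distinct points follows from the geometry just analysed. When $P\notin\cR(C)$, the base-point-free $g^1_3$ given by $|3P|$ defines a degree-three morphism $C\to\P^1$, whose general fibre is reduced (separability in characteristic zero), yielding distinct $A_1,A_2,A_3$ with $3P\sim\sum A_i$. When $P\in\cR(C)$, I write $3P\sim P+K_C$ and use the hyperelliptic pencil $|K_C|$: a general member is $Q+\sigma(Q)$ with $Q,\sigma(Q)$ distinct and both different from $P$, so $\{P,Q,\sigma(Q)\}$ works. For the stated equivalence, the direction $P\in\cR(C)\Rightarrow P\in\{A_1,A_2,A_3\}$ is immediate from part (1): then $P$ is a base point of $|3P|$, so every member, in particular $A_1+A_2+A_3$, contains $P$ in its support. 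Conversely, if say $P=A_1$, then $2P\sim A_2+A_3$; as $A_2,A_3$ are distinct and different from $P$, the divisors $2P$ and $A_2+A_3$ are two distinct effective members of $|2P|$, so $h^0(2P)\ge 2$, forcing $2P\sim K_C$ and hence $P\in\cR(C)$ by the genus-two dichotomy.

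I expect the only genuine obstacle to be the base-point analysis in part (1); the rest is bookkeeping with Riemann--Roch and the uniqueness of the $g^1_2$. In particular, the passage from ``$P$ is not a base point'' to ``$|3P|$ is base-point-free'' is where one must argue carefully, and the clean device is to exhibit simultaneously one member supported only at $P$ and one member supported away from $P$, using $\supp(3P)=\{P\}$ together with the pole of order exactly three. I would isolate this as the key point and keep everything else computational.
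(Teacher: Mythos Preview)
Your proposal is correct and follows essentially the same approach as the paper. The paper's proof of (1) is compressed to the single line ``$|3P|$ is free if and only if $h^0(2P)=1$'', which implicitly uses that the member $3P\in|3P|$ is supported only at $P$, so the only possible base point is $P$; your argument with the two members (one supported at $P$, one away from $P$) is an equivalent and slightly more explicit way to say the same thing. For (2), the paper invokes Bertini's theorem in the free case where you use generic reducedness of fibres of the associated triple cover, and both handle the non-free case by writing $3P\sim P+K_C$ and choosing a general member of $|K_C|$; the equivalence in (2) is argued identically.
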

\begin{proof}
(1) Since $h^0(3P)=2$ by the  Riemann-Roch theorem, $|3P|$ is free if and only if $h^0(2P)=1$.

(2) If the linear system $|3P|$ is free, then the first assertion follows from Bertini's theorem.
If $|3P|$ is not free, then $|2P|=|K_{C}|$ is free and thus we can take distinct three points $A_1:=P, A_2, A_3 \in C$ such that $2P\sim A_2+A_3$.
Suppose that  $3P\sim \sum_{i=1}^3A_i$.
If $P\in \cR(C)$, we have  $P\in \{ A_1, A_2, A_3\}$ since $|3P|$ has a base point $P$.
If $P\in \{A_1, A_2, A_3\}$, then $h^0(2P)=2$. 
\end{proof}

We always assume that  $M_X=Z_X$ in the rest of this section and use the notation above: notice that $h^0(D_2)=1$ and $D_2\sim P_4\in C\setminus\{P_1, P_2, P_3\}$, and that $h^0(D)\ge \deg D-1$ for any divisor $D$ on $C$ by the Riemann-Roch theorem.

Let $H(\Gamma,t)=\sum_{n\ge 0}c_nt^n$ denote the Hilbert series associated with a singularity $(V',o)\in \overline{\cal X}(\Gamma)$ with $p_g(V',o)=p_g(\Gamma)$.  As we have seen in \eqref{eq:maxH},
\[
\sum_{n\ge 0}c_nt^n=1+t^2  +t^3 +2 t^4 +t^5 +2 t^6+2 t^7+\cdots.
\]
We have the following:
\begin{gather*}
h^0(D_n) = c_n 
\text{ for $n=0,1,2,6$ and $n\ge 8$}, \\
h^0(D_3), h^0(D_5) \in \{0,1\}, \quad h^0(D_4), h^0(D_7)\in \{1,2\}.
\end{gather*}
We classify those singularities; they are divided into the following cases:
\begin{enumerate}
\item[I.] $h^0(D_3)=1$.
\item[II.] $h^0(D_3)=0$ and $h^0(D_4)=2$.
\item[III.] $h^0(D_3)=0$ and $h^0(D_4)=1$.
\end{enumerate}
We shall eventually have six cases as seen in Table \ref{tab:M=Z}.

\begin{prop}\label{p:h3=1}
Assume that $M_X=Z_X$. 
If $h^0(D_3)=1$, then $\V$ is not Gorenstein,
$ p_g(V,o)= 8$, $\mult\V=3$, $\emb\V=4$, and
\[
H(V,t)=1+t^2+t^3+t^4+t^5+2 t^6+t^7+\cdots
=\frac{1+t^8+t^{10}}{\left(1-t^2\right) \left(1-t^3\right)}.
\]
Furthermore, the $\C$-algebra $R$ is generated by homogeneous elements of degree $2,3,8,10$.
Note that $\V$ has the minimal multiplicity among the singularities in $\cal X(\Gamma)$ (see \remref{r:minmult}).
\end{prop}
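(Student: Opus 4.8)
The plan is to use the two hypotheses to pin down the linear equivalence classes of all the divisors $D_n$ on $C$, and in particular to prove that the point $P_4$ with $D_2\sim P_4$ is not a Weierstrass point. By \proref{p:M=Z2334}~(1) the hypothesis $M_X=Z_X$ furnishes a point $P_4\in C\setminus\{P_1,P_2,P_3\}$ with $D_2\sim P_4$; recall that $P_1,P_2,P_3$ are distinct. Since $\deg D_3=0$, the hypothesis $h^0(D_3)=1$ means $D_3\sim 0$. Writing $D_2=2Q-(P_1+P_2+P_3)$ and $D_3=3Q-2(P_1+P_2+P_3)$ as in \eqref{eq:PD} and eliminating $Q$, I obtain
\[
Q\sim 2P_4,\qquad P_1+P_2+P_3\sim 3P_4.
\]
Applying \lemref{l:4pts}~(2) with $P=P_4$ and $\{A_1,A_2,A_3\}=\{P_1,P_2,P_3\}$ gives $P_4\in\cR(C)$ if and only if $P_4\in\{P_1,P_2,P_3\}$; as $P_4$ is none of these, $P_4\notin\cR(C)$. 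This is the conceptual core: it is precisely the condition distinguishing the present case from the maximal-$p_g$ situation of \proref{p:maxpg1}, where the corresponding point lies in $\cR(C)$ and $p_g$ equals $10$.

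With these equivalences every $D_n$ becomes a multiple of the single point $P_4$: from \eqref{eq:PD}, $D_n\sim(2n-3\Ce{n/2})P_4$, that is $D_n\sim\tfrac n2P_4$ for even $n$ and $D_n\sim\tfrac{n-3}2P_4$ for odd $n$. Because $P_4\notin\cR(C)$ we have $h^0(2P_4)=1$ (not $2$), while all remaining values $h^0(mP_4)$ are forced by Riemann--Roch. Tabulating $h^0(D_n)$ and summing the resulting series yields
\[
H(V,t)=\frac{1+t^8+t^{10}}{(1-t^2)(1-t^3)}
=1+t^2+t^3+t^4+t^5+2t^6+t^7+\cdots.
\]
Then $p_g\V=8$ follows either from \proref{p:Hpg}~(1) via $p_g=r(1)$, or directly from Pinkham's formula (\thmref{t:Pin}) $p_g=\sum_{n\ge0}h^1(D_n)$ with $h^1(D_n)=h^0(D_n)-\deg D_n+1$, the nonzero terms occurring only for $n\le 7$ and summing to $8$. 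Since $h^0(D_7)=1\ne2$, \lemref{l:gor} shows $\V$ is not Gorenstein.

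For the multiplicity I would use \proref{p:M=Z2334}~(2). As $R_1=0$ and $\dim_\C R_2=1$, the generator of least degree is $x\in R_2$, and the next generator sits in degree $m=3$, because $\dim_\C R_3=1$ while $(R_+^2)_3=0$. The relation $D_3\sim0$ means the linear system $|D_3|$ has no base points, so in particular $P_4$ is not a base point of $H^0(D_3)$, and \proref{p:M=Z2334}~(2) gives $\mult\V=3$, the minimum allowed by \remref{r:minmult}. To obtain $\emb\V=4$ and the generator degrees $2,3,8,10$, I note that $R$ is a normal, hence Cohen--Macaulay, graded domain of dimension $2$, and that $x\in R_2$, $y\in R_3$ form a homogeneous system of parameters. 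Therefore $R$ is a free $\C[x,y]$-module whose generator degrees are the exponents in the numerator $1+t^8+t^{10}$, namely $0,8,10$; equivalently $R$ is generated as a $\C$-algebra by $x,y$ and two further elements $w_8,w_{10}$. Comparing $\dim_\C R_n$ with $\dim_\C(R_+^2)_n$ in degrees $8$ and $10$ confirms that $w_8,w_{10}$ are genuinely new generators, so $\emb\V=4$.

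The step I expect to be the main obstacle is the non-Weierstrass conclusion $P_4\notin\cR(C)$: it is the only point at which the two hypotheses genuinely interact, and it is responsible both for $p_g=8$ (rather than $10$) and for the failure of the Gorenstein property. Identifying the exact algebra generators in degrees $8$ and $10$ is the most computational part; the freeness of $R$ over $\C[x,y]$ together with the factored Hilbert series is, I believe, the cleanest way to handle it, avoiding an open-ended degree-by-degree search for generators and relations.
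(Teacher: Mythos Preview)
Your proof is correct, and it reaches the same conclusions as the paper, but the two arguments diverge at the key step of showing $h^0(D_4)=h^0(D_7)=1$.

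The paper argues by contradiction: after observing $Q\sim 2P_4\sim D_4\sim D_7$, it \emph{supposes} $h^0(D_4)=2$, deduces from \lemref{l:gor} that $\V$ is Gorenstein, then from \thmref{t:localring}~(2) that $\emb\V\le 3$; this forces $R$ to be a hypersurface $\C[x,y,z]/(y^2+xz)$ in degrees $2,3,4$, which would be rational---a contradiction. Hence $h^0(D_4)=1$. Your route is more direct and more geometric: you invoke \lemref{l:4pts}~(2) with $P=P_4$ and $\{A_1,A_2,A_3\}=\{P_1,P_2,P_3\}$ (legitimate, since $P_1,P_2,P_3$ are distinct and $3P_4\sim P_1+P_2+P_3$) to conclude immediately that $P_4\notin\cR(C)$, whence $h^0(2P_4)=1$. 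This pinpoints the exact dichotomy with the hyperelliptic-type case of \proref{p:maxpg1}, where the analogous point lies in $\cR(C)$; the paper's contradiction argument obscures this comparison somewhat.

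For $\emb\V=4$ the approaches again differ. The paper combines ``not Gorenstein $\Rightarrow$ not hypersurface $\Rightarrow$ $\emb\ge 4$'' with Abhyankar's bound $\emb\le \mult+1=4$ from \thmref{t:localring}~(1). You instead use Cohen--Macaulayness to get freeness of $R$ over $\C[x,y]$, read off module generators in degrees $0,8,10$ from the numerator, and then check irredundancy by a dimension count. Both are valid; the paper's argument is shorter here, while yours has the advantage of not needing to cite Abhyankar's inequality and of producing the generator degrees and the embedding dimension in a single stroke (the paper computes the generator degrees afterwards from the same Hilbert series $H(V,t)(1-t^2)(1-t^3)=1+t^8+t^{10}$, just as you do). One small remark: your irredundancy check can be replaced by the one-line observation that $\V$ not Gorenstein forces $\emb\ge 4$, so all four of $x,y,w_8,w_{10}$ must be needed.
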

\begin{proof}
We have $h^0(D_5)=1$, since $h^0(D_2)=h^0(D_3)=1$.
Since $D_2\sim P_4$ and $D_3\sim 0$, 
by a similar argument as in the proof of \proref{p:maxpg2} 
we have that 
\[
3Q\sim 2\sum _{i=1}^3 P_i, \quad 
Q\sim 2P_4\sim D_4\sim D_7, \quad 3P_4\sim \sum _{i=1}^3 P_i.
\]
In particular, $h^0(D_4)=h^0(D_7)$.
By \proref{p:M=Z2334} (2), $\mult\V=3$.

Suppose that $h^0(D_4)=2$.
Then $(V,o)$ is Gorenstein by \lemref{l:gor}.
Therefore, $\emb(V,o)\le \mult(V,o)=3$ by \thmref{t:localring}. 
Then $R$ is generated by $x\in R_2$, $y\in R_3$ and $z\in R_4$ as $\C$-algebra $R$ with equation $y^2+xz=0$ (cf. the proof of \proref{p:maxpg3} (3)); however, this implies that $(V,o)$ is rational.
Hence $h^0(D_4)=1$. Then $\V$ is not Gorenstein by \lemref{l:gor}, and therefore $\V$ is not hypersurface.
Thus,  $\emb(V,o)= 4$ by \thmref{t:localring}.
Since $H(\Gamma,t)-H(V,t)=t^4+t^7$, we have $p_g(\Gamma)-p_g\V=2$ by \proref{p:Hpg}.
Since $x,y$ form a regular sequence of $R$, the Hilbert series of $R/(x,y)$ is $H(V,t)(1-t^2)(1-t^3)=1+t^8+t^{10}$.
Then we easily see the degrees of generators.
\end{proof}

\begin{rem}
By \lemref{l:4pts}, we can take distinct points $P_1, \dots, P_4\in C$ such that $3P_4\sim \sum_{i=1}^3P_i$ and $2P_4\not\sim K_{C}$.
Let $Q=2P_4$.
Then we have
\[
D_2 \sim P_4, \quad D_3 \sim 2(3P_4-\sum_{i=1}^3P_i)\sim  0, \quad
h^0(D_4)=h^0(D_7)=h^0(2P_4)=1,
\]
and  $M_X=Z_X$ by \proref{p:M=Z2334}.
Hence we have a singularity $(V,o) \in \overline{\cal X}(\Gamma)$ satisfying all the conditions in \proref{p:h3=1}.
\end{rem}

Next we consider the case $h^0(D_3)=0$.
Since $D_2\sim P_4$,  the following three conditions are equivalent 
(cf. the proof of \proref{p:maxpg2}):
\begin{quotation}
(1) $h^0(D_3)=0$,  \qquad (2)  $3Q\not\sim 2\sum _{i=1}^3 P_i$,
\qquad (3) $Q\not\sim 2P_4$.
\end{quotation}

Let $x\in R_2\setminus \{0\}$.
We will compute the embedding dimension of $\V$ via the curve singularity $(V(x), o)$, where $V(x)=\{x=0\}\subset V$.
Let $H(V(x),t)=\sum_{n\ge 0}d_it^i$ denote the Hilbert series of $R/(x)$.

\begin{lem}\label{l:edC}
The curve $V(x)$ is irreducible and the set $\Gamma_x:=\defset{n\in \Z_{\ge 0}}{d_n\ne 0}$ is a numerical semigroup.
If $\Gamma_x= \gen{m_1, \dots, m_e}$,  then 
\[
\emb\V-1=\emb(V(x),o)\le e. 
\qedhere
\]
\end{lem}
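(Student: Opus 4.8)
The plan is to show that $S:=R/(xR)$ is a one-dimensional graded domain and then to read off all three assertions from the structure of its homogeneous pieces.

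First I would record the ambient structure. The ring $R=\bigoplus_{n\ge 0}H^0(D_n)T^n\subset \C(E_0)[T]$ is a two-dimensional normal graded domain, hence Cohen--Macaulay, and $x\in R_2\setminus\{0\}$ is a homogeneous nonzerodivisor of positive degree; therefore $S$ is a one-dimensional Cohen--Macaulay graded ring with Hilbert series $\sum_n d_nt^n$ and no embedded primes. To see that $S$ is a domain I would argue that $V(x)$ is integral. Since $x$ is homogeneous, $V(x)$ is a cone, so every irreducible component passes through $o$ and its strict transform in $X$ meets $E=\pi^{-1}(o)$. Writing $\di_X(x)=Z_X+H$ with non-exceptional part $H$ as in the proof of \proref{p:M=Z2334}, and using $Z_X=E_0^*$ together with $\di_X(x)E_i=0$, I obtain $HE_0=1$ and $HE_i=0$ for $i=1,2,3$, so $HE=1$. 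As each component of $H$ meets $E$ and thus contributes a positive amount to $HE$, the value $HE=1$ forces $H$ to consist of a single component, appearing with multiplicity one and meeting $E_0$ transversally at $P_4$. Hence $V(x)=\pi(H)$ is irreducible, and being generically reduced with no embedded primes, $S$ is reduced; thus $S$ is a domain.

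With $S$ a domain the semigroup claim is immediate: $0\in\Gamma_x$ because $d_0=1$, and $\Gamma_x$ is closed under addition because a product of nonzero homogeneous elements of a domain is nonzero. Since $\dim S=1$, the function $d_n$ is a positive constant for $n\gg0$, so $\Z_{\ge0}\setminus\Gamma_x$ is finite and $\Gamma_x$ is a numerical semigroup. For the embedding dimension I would pass to the normalization $\bar S$, a finite graded normal one-dimensional domain with $\bar S_0=\C$ and positive grading, so $\bar S=\C[t]$ for a single variable $t$. Consequently $\dim_\C\bar S_n\le 1$, whence $d_n=\dim_\C S_n\le 1$ for all $n$; thus $d_n\in\{0,1\}$ and $d_n=1$ exactly when $n\in\Gamma_x$. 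Writing $\Gamma_x=\gen{m_1,\dots,m_e}$ and choosing $0\ne y_i\in S_{m_i}$, for any $n=\sum_i a_im_i\in\Gamma_x$ the element $\prod_i y_i^{a_i}$ is a nonzero member of the one-dimensional space $S_n$ and hence spans it; therefore $S=\C[y_1,\dots,y_e]$ and $\emb(V(x),o)\le e$. Finally, $\emb\V$ equals the number of minimal homogeneous generators of $R$, i.e. $\dim_\C R_+/R_+^2$ with $R_+=\bigoplus_{n>0}R_n$; since $R_1=0$ (because $\deg D_1<0$) we have $x\in R_2\setminus R_+^2$, so $x$ belongs to a minimal generating set and killing it removes exactly one generator, giving $\emb(V(x),o)=\emb\V-1$.

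The main obstacle is the very first step, the integrality of $V(x)$: both the closure of $\Gamma_x$ under addition and the bound $d_n\le 1$ depend on $S$ being a domain, and this rests on showing that the single transverse intersection $HE=1$ forces $H$ to be one reduced branch.
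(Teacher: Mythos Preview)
Your argument is correct and follows essentially the same route as the paper: both deduce that $V(x)$ is integral from $HE=1$ (using $Z_X=E_0^*$ and that every component of the strict transform of $V(x)$ must meet $E$) and then pass to the normalization $H\to V(x)$. The only difference is packaging. The paper observes directly that for a homogeneous $h\in R\setminus(x)$ the vanishing order of $h|_{V(x)}$ at $o$ equals the vanishing order of $h$ along $E_0$, i.e.\ $\deg h$; hence $\Gamma_x$ is literally the value semigroup of the branch $(V(x),o)$, and the bound $\emb(V(x),o)\le e$ is then quoted as the standard fact about curve singularities. You arrive at the same point algebraically, showing $\bar S=\C[t]$ so that $d_n\le 1$ and then exhibiting generators of $S$ in the degrees $m_1,\dots,m_e$. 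Your self-flagged ``obstacle'' is not a gap: $HE=1$ with each component of $H$ meeting $E$ genuinely forces a single reduced branch, and the paper uses exactly this.
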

\begin{proof}
Let $H\subset X$ be as in the proof of \proref{p:M=Z2334}.
Then $H$ is irreducible and nonsingular  
 since $EH=1$, and hence the induced map $H\to V(x)$ is the normalization. If $h\in R\setminus (x)$ is a homogeneous element, then the order of $h|_{V(x)}$ at $o\in V(x)$ coincides with the order of vanishing of $h$ along $E_0$, that is, $\deg h$.
Hence $\Gamma_x$ coincides with the so-called {\em semigroup of values} of the curve singularity $(V(x),o)$.
Then the inequality is well-known.
\end{proof}

In the following,  it will be useful to notice that the Frobenius number of $\gen{a,b}$ is $(a-1)(b-1)-1$.

\begin{prop}\label{p:D4=2}
Assume that $M_X=Z_X$. If $h^0(D_3)=0$ and $h^0(D_4)=2$, then $\V$ is not Gorenstein and $\mult(V,o)=4$.
\begin{enumerate}
\item If $h^0(D_5)=1$, then $p_g(V,o)= 8$, $\emb(V,o)=4$, 
\[
H(V,t)=1+t^2+2 t^4+t^5+2 t^6+t^7+\cdots
=\frac{1+t^5+t^{10}+t^{11}}{\left(1-t^2\right)
   \left(1-t^4\right)},
\]
and  $\C$-algebra $R$ is generated by homogeneous elements of degree $2,4,5,11$.

\item If $h^0(D_5)=0$, then $p_g(V,o)= 7$, $\emb(V,o)=5$, 
\[
H(V,t)=1+t^2+2 t^4+2 t^6+t^7+\cdots
=\frac{1+t^7+t^9+t^{10}}{\left(1-t^2\right)
   \left(1-t^4\right)},
\]and  $\C$-algebra $R$ is generated by homogeneous elements of degree $2,4,7,9,10$.
\end{enumerate}
\end{prop}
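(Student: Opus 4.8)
The plan is to handle both parts in parallel, first extracting the consequences of the common hypotheses $M_X=Z_X$, $h^0(D_3)=0$, and $h^0(D_4)=2$, and only then splitting on the value of $h^0(D_5)$.

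First I would record the structural facts. Since $\V$ satisfies $M_X=Z_X$, \proref{p:M=Z2334}~(1) supplies a point $P_4\in C\setminus\{P_1,P_2,P_3\}$ with $D_2\sim P_4$, and from $D_4=2D_2$ (immediate from the formula in \eqref{eq:PD}) we get $D_4\sim 2P_4$. As $\deg D_4=2$ on the genus-two curve $C$, the hypothesis $h^0(D_4)=2$ forces $D_4\sim K_{C}$, hence $2P_4\sim K_{C}$ and the linear system $|D_4|=|K_{C}|$ is base point free. Next, $D_7-D_4=D_3$ and $h^0(D_3)=0$ means $D_3\not\sim 0$, so $D_7\not\sim K_{C}$, giving $h^0(D_7)=1$; by \lemref{l:gor} this shows $\V$ is \emph{not} Gorenstein. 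For the multiplicity I would invoke \proref{p:M=Z2334}~(2): since $R_3=0$ while $\dim R_4=2$ and $x^2$ spans only a line for $x\in R_2$, the least degree of a generator other than $x$ is $m=4$, and $P_4$ is not a base point of $H^0(D_4)=|K_{C}|$; therefore $\mult\V=4$. Abhyankar's bound (\thmref{t:localring}~(1)) then gives $\emb\V\le \mult\V+1=5$.

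Second, I would pin down the remaining $h^0(D_n)$. The values for $n\le 2$, $n=6$, and $n\ge 8$ are topological (equal to the $c_n$ of $H(\Gamma,t)$), while $h^0(D_3)=0$, $h^0(D_4)=2$, $h^0(D_7)=1$ are now known and $\deg D_6=3$ forces $h^0(D_6)=2$ by Riemann--Roch. Thus $H(V,t)$ differs from $H(\Gamma,t)$ only in degrees $3,5,7$. In case (1), $h^0(D_5)=1=c_5$, so $H(\Gamma,t)-H(V,t)=t^3+t^7$, and \proref{p:Hpg}~(2) together with $p_g(\Gamma)=10$ gives $p_g\V=10-2=8$; in case (2), $h^0(D_5)=0$, so $H(\Gamma,t)-H(V,t)=t^3+t^5+t^7$ and $p_g\V=10-3=7$. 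In each case the resulting power series is identified with the asserted rational function, whose denominator $(1-t^2)(1-t^4)$ reflects the generators $x\in R_2$ and a new $z\in R_4$.

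Third, to obtain $\emb\V$ and the generator degrees I would use the semigroup of values of the curve $V(x)$ via \lemref{l:edC}. Since $x$ is a regular element, $H(V(x),t)=(1-t^2)H(V,t)$, and reading off its support gives $\Gamma_x=\gen{4,5,11}$ in case (1) and $\Gamma_x=\gen{4,7,9,10}$ in case (2). Hence $\emb(V(x),o)\le 3$ (resp.\ $\le 4$), so $\emb\V\le 4$ (resp.\ $\le 5$); matching these bounds with explicit generators of $R$ in degrees $2,4,5,11$ (resp.\ $2,4,7,9,10$) then yields $\emb\V=4$ (resp.\ $5$) and the stated minimal generating set. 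The need for the top generator is forced in each case because the subalgebra generated by the lower-degree generators has, in that degree, dimension strictly smaller than $h^0(D_n)$.

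The main obstacle I anticipate is this last step: turning the numerical data of $\Gamma_x$ into a rigorous statement about a \emph{minimal} generating set of $R$, that is, verifying both that the listed degrees generate $R$ and that none is redundant. This requires comparing, in each relevant degree, the dimension of the span of lower-degree products with $h^0(D_n)$, and that is where the careful bookkeeping lies; by contrast, the ``not Gorenstein'' and $\mult\V=4$ assertions follow quickly from \lemref{l:gor} and \proref{p:M=Z2334}.
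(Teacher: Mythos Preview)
Your proposal is correct and follows essentially the same route as the paper: derive $D_4\sim 2P_4\sim K_C$, use $D_7=D_4+D_3\not\sim K_C$ to get $h^0(D_7)=1$ and non-Gorenstein via \lemref{l:gor}, compute $\mult\V=4$ via \proref{p:M=Z2334}(2), obtain $p_g$ from $H(\Gamma,t)-H(V,t)$ via \proref{p:Hpg}(2), and bound $\emb\V$ using the semigroup $\Gamma_x$ via \lemref{l:edC}.

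Two small points where the paper is slightly more efficient than your outline. First, for the lower bound on the embedding dimension the paper simply notes that ``not Gorenstein'' already forces $\emb\V\ge 4$ (a hypersurface would be Gorenstein), which in case~(1) immediately pins down $\emb\V=4$ without exhibiting generators. Second, for the generator degrees the paper picks $y\in R_4$ so that $x,y$ is a regular sequence and computes the polynomial $H'(t)=H(V,t)(1-t^2)(1-t^4)$, the Hilbert series of the Artinian quotient $R/(x,y)$; since $R$ is Cohen--Macaulay this records the degrees of a free $\C[x,y]$-module basis of $R$, from which the algebra-generator degrees are read off directly ($H'(t)=1+t^5+t^{10}+t^{11}$ in case~(1), $1+t^7+t^9+t^{10}$ in case~(2)). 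This replaces the degree-by-degree dimension bookkeeping you flag as the main obstacle, and is worth adopting.
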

\begin{proof}
We have that $D_4\sim 2P_4\sim K_{C}$ and  $D_4\not \sim D_7$.
Hence $h^0(D_7)= 1$ and $\V$ is not Gorenstein by \lemref{l:gor}.
Therefore, $\emb\V\ge 4$.
Since $H^0(D_4)$ has no base points, we have $\mult\V=4$ by \proref{p:M=Z2334}, and $\emb(V,o)\le 5$ by \thmref{t:localring}.
Take homogeneous element $y\in R_4$ such that $x$ and $y$ belong to a minimal set of homogeneous generators of $\C$-algebra $R$.
Then $x,y$ form a regular sequence of $R$ and the Hilbert series of $R/(x,y)$ is $H'(t):=H(V,t)(1-t^2)(1-t^4)$.

(1) Assume that $h^0(D_5)=1$. 
We  have $H(V,t)=H(\Gamma,t)-(t^3+t^7)$ and $p_g\V=p_g(\Gamma)-2$ by \proref{p:Hpg} (2).
Since 
\[
H(V(x),t)=H(V,t)(1-t^2)=1+t^4+t^5+t^8\sum_{i\ge 0}t^i,
\]
we have $\Gamma_x= \gen{4,5,11}$.
It follows from \lemref{l:edC} that $\emb\V=4$.
Since $H'(t)=1+t^5+t^{10}+t^{11}$, we obtain the degrees of homogeneous generators of $R$.

(2) Assume that $h^0(D_5)=0$.
Then  $H(V,t)=H(\Gamma,t)-(t^3+t^5+t^7)$,
$H(V(x),t)=1+t^4+t^7\sum_{i\ge 0}t^i$, and $H'(t)=1+t^7+t^9+t^{10}$.
Thus, we obtain the assertion by a similar argument as above.
\end{proof}

\begin{rem}
Let $\cR(C)$ and $\sigma$ be as in \notref{n:rs}.
Suppose that $P_4\in \cR(C)$ and $P_5\in C\setminus\cR(C)$.

(1)
Let $Q=P_4+P_5$. Then $|2Q-P_4|$ is free since $h^0(P_4+2P_5)=2>h^0(2P_5)=h^0(P_4+P_5)$.
Thus, there exist distinct points $P_1, P_2, P_3 \in C\setminus \{P_4\}$ such that $2Q-P_4\sim P_1+P_2+P_3$.
We set $D=Q-\frac{1}{2}\sum_{i=1}^{3}P_i$. Then 
\begin{gather*}
D_2\sim P_4, \quad D_3\sim 2D_2-Q\sim P_4-P_5\not\sim 0, \quad
D_4\sim K_C, \quad \\
 D_5\sim 3D_2-Q\sim 2P_4-P_5\sim (P_5+\sigma(P_5))-P_5=\sigma(P_5).
\end{gather*}
Therefore, we have a singularity satisfying the condition of \proref{p:D4=2} (1).

(2) Let $Q=4P_4-2P_5$. If $|2Q-P_4|$ has a base point $P_0$, then $K_C\sim 2Q-P_4-P_0\sim 7P_4-4P_5-P_0$, and thus $5P_4\sim 4P_5+P_0$.
However, since $|5P_4|$ has a base point $P_4$, we have $4P_4\sim 4P_5$; this is impossible.
Hence $|2Q-P_4|$ is free and there exist distinct points $P_1, P_2, P_3 \in C\setminus \{P_4\}$ such that $2Q-P_4\sim P_1+P_2+P_3$.
Then \begin{gather*}
D_2\sim P_4, \quad D_3\sim 2P_5-2P_4\not\sim 0, \quad
D_4\sim K_C, \quad \\
 D_5\sim  2P_5-P_4, \quad h^0(2P_5-P_4)=0.
\end{gather*}
Hence we have a singularity satisfying the condition of \proref{p:D4=2} (2).
\end{rem}

\begin{prop}\label{p:011}
Assume that $M_X=Z_X$.
If  $h^0(D_3)=0$ and $h^0(D_4)=h^0(D_5)=1$, then  
$\mult(V,o)=\emb(V,o)=5$.
\begin{enumerate}
\item If $h^0(D_7)=2$, then  $(V,o)$ is  Gorenstein,  $p_g(V,o)= 8$, 
\[
H(V,t)=1+t^2+t^4+t^5+2 t^6+2
   t^7+\cdots
=\frac{1+t^6+t^7+t^8+t^{14}}{\left(1-t^2\right)
   \left(1-t^5\right)},
\]
  and  $\C$-algebra $R$ is generated by homogeneous elements of degree 
$2,5,6,7,8$.

\item If $h^0(D_7)=1$, then $(V,o)$ is not Gorenstein, $p_g(V,o)= 7$,
\[
H(V,t)=1+t^2+t^4+t^5+2
   t^6+t^7+\cdots
=\frac{1+t^6+t^8+t^9+t^{12}}{\left(1-t^2\right)
   \left(1-t^5\right)},
\]
  and  $\C$-algebra $R$ is generated by homogeneous elements of degree $2,5,6,8,9$.
\end{enumerate}

\end{prop}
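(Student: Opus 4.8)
The plan is to run cases (1) and (2) in parallel, treating the single number $h^0(D_7)$ as the only datum that separates them, and to read off every invariant from the machinery already set up for $\Gamma(2,3,3,4)$. First I would record the numerical input. Since $M_X=Z_X$, \proref{p:M=Z2334} (1) supplies a point $P_4\in C\setminus\{P_1,P_2,P_3\}$ with $D_2\sim P_4$. Whenever $\deg D_n\ge 3$ the value $h^0(D_n)$ is topological by Riemann--Roch, so the only undetermined pieces are $n\in\{2,3,4,5,7\}$; the hypotheses fix $h^0(D_2)=h^0(D_4)=h^0(D_5)=1$ and $h^0(D_3)=0$, leaving $h^0(D_7)\in\{1,2\}$ as the case distinction. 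This determines $H(V,t)$ completely in each case, and comparing with the maximal series $H(\Gamma,t)$ of \eqref{eq:maxH} through \proref{p:Hpg} (2) gives $H(\Gamma,t)-H(V,t)=t^3+t^4$ in case (1) and $t^3+t^4+t^7$ in case (2), hence $p_g\V=10-2=8$ and $p_g\V=10-3=7$. The Gorenstein dichotomy is immediate from \lemref{l:gor}: $(V,o)$ is Gorenstein exactly when $h^0(D_7)=2$, i.e. in case (1).

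Next I would settle the multiplicity uniformly for both cases. Because $R_1=R_3=0$ and $D_4=2D_2$ forces $R_4=\C x^2$ for a generator $x\in R_2$, the smallest degree of a generator other than $x$ is $m=5$. To invoke \proref{p:M=Z2334} (2) I must check that $P_4$ is not a base point of $|D_5|$: as $\deg D_5=1$ and $h^0(D_5)=1$, the unique effective divisor is a single point $P_5$ with $D_5\sim P_5$, and $P_5=P_4$ would give $D_3=D_5-D_2\sim 0$, contradicting $h^0(D_3)=0$. Hence $\mult\V=m=5$ in both cases.

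The embedding dimension is the crux, and I would sandwich it. For the upper bound I take the nonzerodivisor $x\in R_2$ and apply \lemref{l:edC}: from $H(V(x),t)=H(V,t)(1-t^2)$ one reads off the value semigroup $\Gamma_x=\langle 5,6,7,8\rangle$ in case (1) and $\langle 5,6,8,9\rangle$ in case (2), each minimally generated by four elements, so $\emb\V\le 5$. For the matching lower bound I compute $\dim(\m/\m^2)_n=h^0(D_n)-\dim(\m^2)_n$ degree by degree, using that $(\m^2)_n$ is spanned by products of strictly lower homogeneous pieces; this forces a new generator in each of the degrees $2,5,6,7,8$ in case (1) and $2,5,6,8,9$ in case (2), so $\emb\V\ge 5$. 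The two bounds meet at $5$, which simultaneously pins down the generator degrees and shows that the five generators already suffice. In case (1) one can obtain $\emb\V\le 5$ for free from Sally's bound $\emb\le\mult+d-2$ in \thmref{t:localring} (2), a useful consistency check.

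The hard part will be this embedding-dimension bookkeeping rather than the multiplicity or $p_g$ computations, which follow directly from the cited results. The delicate point is that the split between a single degree-$7$ generator (case (1)) and the degree-$8,9$ generators (case (2)) is governed entirely by $h^0(D_7)$, so the dimensions of $(\m^2)_7$, $(\m^2)_8$, and $(\m^2)_9$ must be tracked carefully and separately in the two cases; getting $\Gamma_x$ and these three graded pieces right is exactly what forces the sandwich to close at $\emb\V=5$ with the asserted generator degrees.
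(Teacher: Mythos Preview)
Your proposal is correct and follows essentially the same route as the paper: the multiplicity via \proref{p:M=Z2334} (2) using $P_5\ne P_4$, the Gorenstein dichotomy via \lemref{l:gor}, and $p_g$ via \proref{p:Hpg} are handled identically. The only cosmetic difference is in the embedding-dimension step: the paper reads the generator degrees off the Artinian Hilbert series $H'(t)=H(V,t)(1-t^2)(1-t^5)$ of $R/(x,y)$ and, in case~(1), invokes Sally's bound from \thmref{t:localring}~(2) for the upper bound rather than $\Gamma_x$, whereas you use $\Gamma_x$ uniformly together with a direct $\mathfrak m/\mathfrak m^2$ count---both arguments yield the same sandwich at $\emb\V=5$.
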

\begin{proof}
The proof is similar to that of \proref{p:D4=2}.
We have $R_4=R_2^2$ and  $D_4\sim 2P_4\not\sim K_{C}$.
Since $D_3\not\sim 0$ and $h^0(D_5)=1$, 
there exists a point $P_5\in C$ such that  $D_5\sim P_5\ne P_4$ (note that $D_2\not \sim D_2+D_3=D_5$). 
Therefore, $\mult(V,o)=5$ by \proref{p:M=Z2334} (2).
Let $y\in R_5\setminus\{0\}$.
Then  the Hilbert series of $R/(x,y)$ is $H'(t):=H(V,t)(1-t^2)(1-t^5)$.
From \lemref{l:gor}, $\V$ is Gorenstein if and only if $h^0(D_7)=2$.

(1)  Assume that $h^0(D_7)=2$.
We have $H(V,t)=H(\Gamma,t)-(t^3+t^4)$ and  
$H'(t)=1+t^6+t^7+t^8+t^{14}$.
Hence $p_g(V,o)=p_g(\Gamma)-2$ by \proref{p:Hpg} and $\emb\V= 5$ by \thmref{t:localring} (2).
 Therefore, $R$ is generated by homogeneous elements of degree $2,5,6,7,8$.

(2) Assume that $h^0(D_7)=1$.
We have $H(V,t)=H(\Gamma,t)-(t^3+t^4+t^7)$, $H'(t)=1+t^6+t^8+t^9+t^{12}$, 
$
H(V,t)(1-t^2)=1+t^5+t^6+t^8\sum_{i\ge 0}t^i,
$
and $\Gamma_x= \gen{5,6,8,9}$.
Hence we obtain the assertion by similar arguments as above.
\end{proof}

The following proposition shows the existence and the property of $D$ corresponding to the singularities in \proref{p:011} (1).

\begin{prop}\label{p:h3=0h7=2}
We have the following.
\begin{enumerate}
\item  There exist points $P_1, \dots, P_4\in C$ and an effective divisor $Q$ of degree two on $C$ which satisfy the condition 
\begin{enumerate}
\item[(C1)]
 $P_1, \dots, P_4$ are distinct, $2Q\sim \sum _{i=1}^4 P_i$, $2P_4\not\sim K_{C}$, $4P_4\sim Q+K_{C}$.
\end{enumerate}
\item Let  $P_1, \dots, P_4$ and $Q$ be as above, and let $D=Q-\frac{1}{2}\sum _{i=1}^3 P_i$.
Then the condition {\rm (C1)} is satisfied if and only if  $M_X=Z_X$ and $h^0(D_3)=0$, $h^0(D_4)=h^0(D_5)=1$, $h^0(D_7)=2$.
\end{enumerate}
\end{prop}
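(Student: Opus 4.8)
My plan is to reduce both assertions to computing the divisor classes $D_2,D_3,D_4,D_5,D_7$ on the genus-two curve $C$ in terms of the single point $P_4$ produced by $M_X=Z_X$. By \proref{p:M=Z2334} (1), the equality $M_X=Z_X$ is equivalent to the existence of $P_4\in C\setminus\{P_1,P_2,P_3\}$ with $D_2\sim P_4$, i.e.\ $2Q\sim\sum_{i=1}^4 P_i$; this is precisely conditions (a) and (b) of (C1). Granting $2Q\sim\sum_{i=1}^4 P_i$, equivalently $\sum_{i=1}^3 P_i\sim 2Q-P_4$, a direct substitution into the definitions $D_k=kQ-\sum_{i=1}^3\Ce{k/2}P_i$ gives the key equivalences
\[
D_4\sim 2P_4,\qquad D_3\sim 2P_4-Q,\qquad D_5\sim 3P_4-Q,\qquad D_7\sim 4P_4-Q.
\]
Since $C$ has genus two, Riemann--Roch together with the hyperelliptic relation $P+\sigma(P)\sim K_C$ (see \notref{n:rs}) converts each cohomological condition into a linear equivalence: for a degree-two divisor $F$ one has $h^0(F)=2$ iff $F\sim K_C$; a degree-one divisor has $h^0=1$ iff it is effective; and a degree-zero divisor has $h^0=1$ iff it is trivial.

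For part (2) I would verify the equivalence condition by condition. In the forward direction, (c) gives $h^0(D_4)=1$ because $D_4\sim 2P_4\not\sim K_C$, and (d) gives $D_7\sim 4P_4-Q\sim K_C$, hence $h^0(D_7)=2$; combining (c) and (d) shows $Q\not\sim 2P_4$ (otherwise (d) would force $2P_4\sim K_C$), so $h^0(D_3)=0$, while substituting $Q\sim 4P_4-K_C$ from (d) yields $D_5\sim K_C-P_4\sim\sigma(P_4)$, so $h^0(D_5)=1$. In the reverse direction, $M_X=Z_X$ supplies (a) and (b) via \proref{p:M=Z2334} (1); $h^0(D_4)=1$ forces $2P_4\not\sim K_C$, which is (c); and $h^0(D_7)=2$ forces $D_7\sim K_C$, i.e.\ $4P_4\sim Q+K_C$, which is (d). The remaining hypotheses $h^0(D_3)=0$ and $h^0(D_5)=1$ are then automatic, consistently with the forward direction.

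For the existence in part (1) I would build the data starting from $P_4$. Choose $P_4\in C\setminus\cR(C)$, so that (c) holds; since $\deg(4P_4-K_C)=2$, Riemann--Roch gives $h^0(4P_4-K_C)\ge 1$, so I may take an effective divisor $Q$ of degree two with $Q\sim 4P_4-K_C$, which is (d). It then remains to realise (b), i.e.\ to find distinct points $P_1,P_2,P_3\in C\setminus\{P_4\}$ with $\sum_{i=1}^3 P_i\sim 2Q-P_4\sim 7P_4-2K_C$. The complete linear system $|7P_4-2K_C|$ is a $g^1_3$ (its degree is three and Riemann--Roch gives $h^0=2$), and it is base-point free exactly when the degree-one class $7P_4-3K_C$ is non-effective.

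The main obstacle is this last genericity point: I must choose $P_4$ so that $7P_4-3K_C$ is not effective. I expect this to hold for general $P_4$, because $P\mapsto[7P-3K_C]$ is a non-constant morphism from $C$ to $\Pic^1(C)$ whose image is an irreducible curve distinct from the locus $W_1\subset\Pic^1(C)$ of effective classes (the two curves carry different numerical classes, since multiplication by $7$ scales the $\theta$-class); hence the bad set $\{P_4 : 7P_4-3K_C\text{ effective}\}$ is a proper closed, therefore finite, subset of $C$. For such a $P_4$ the system $|7P_4-2K_C|$ is free, so Bertini's theorem in characteristic zero yields a reduced member $P_1+P_2+P_3$, and freeness allows this member to be chosen to avoid $P_4$. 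This produces four distinct points and an effective $Q$ satisfying (C1), completing part (1). Everything else reduces to the class computations above, so the geometric step of producing three distinct points in the free $g^1_3$ is where the genuine work lies.
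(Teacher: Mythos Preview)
Your treatment of part~(2) is essentially the paper's: both reduce to the identities $D_3\sim 2P_4-Q$, $D_4\sim 2P_4$, $D_5\sim 3P_4-Q$, $D_7\sim 4P_4-Q$ and read off the cohomological conditions via Riemann--Roch on a genus-two curve.

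For part~(1) the overall strategy agrees with the paper (pick $P_4$, set $Q\sim 4P_4-K_C$, then find $P_1,P_2,P_3$ in $|2Q-P_4|$), but the way you handle the linear system $|2Q-P_4|$ is different, and your version has a soft spot. You argue that for general $P_4$ the system is base-point free by comparing the numerical class of the image of $P\mapsto[7P-3K_C]$ with that of $W_1$. The pushforward class is indeed $49[\Theta]$ versus $[\Theta]$, but this alone does not prevent the \emph{image} curve from equalling $W_1$: if $\phi\colon C\to W_1$ happened to have degree $49$, the classes would be compatible. What actually closes the gap is Riemann--Hurwitz: a non-constant morphism between smooth curves of genus two is an isomorphism, so if $\phi(C)=W_1\cong C$ then $\phi$ has degree~$1$, contradicting $\phi_*[C]=49[\Theta]\ne[\Theta]$. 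With that sentence added your argument is complete.

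The paper avoids the Jacobian argument altogether. It imposes the slightly stronger but obviously generic condition $3(P_4-\sigma(P_4))\not\sim 0$ on $P_4$ and then does \emph{not} insist that $|2Q-P_4|$ be free. If it is free, one finishes as you do; if it has a base point $G$, the paper checks directly from $3(P_4-\sigma(P_4))\not\sim 0$ that $G\ne P_4$, and then takes $P_3:=G$ together with a general hyperelliptic pair $P_1,\sigma(P_1)$, so that $P_1+P_2+P_3\sim K_C+G\sim 2Q-P_4$. This is more hands-on and sidesteps any appeal to the geometry of the theta divisor; your route is more conceptual but needs the extra Riemann--Hurwitz sentence to be airtight.
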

\begin{proof}
(1)
Assume that $\cR(C)$ and $\sigma$ be as in \notref{n:rs}.
Let $P_4\in C$ satisfies $3(P_4-\sigma(P_4))\not\sim 0$.
Then $2P_4\not \sim K_{C}$, because $P_4\not\in \cR(C)$.
Since $\deg(4P_4-K_C)\ge 2$, there exists an effective divisor $Q$ on $C$ such that $4P_4-K_{C}\sim  Q$.
Since $\deg(2Q-P_4)=3$, we have $h^0(2Q-P_4)=2$.
If the linear system $|2Q-P_4|$ is free, then we have distinct three points $P_1, P_2, P_3\in C\setminus \{P_4\}$ such that $2Q\sim \sum _{i=1}^4 P_i$.
If $|2Q-P_4|$ has a base point $G\in C$, then $2Q-P_4-G\sim K_{C}$.
If $G=P_4$, we have $2Q\sim 2P_4+K_{C}$.
Since $4P_4\sim Q+K_{C}$, we have $Q+2P_4\sim 2K_{C} \sim Q+\sigma(Q)$, and hence $2P_4\sim \sigma (Q)$.
However, $4P_4\sim Q+K_{C} \sim \sigma(2P_4)+\sigma(P_4)+P_4$; it contradicts that $3(P_4-\sigma(P_4))\not\sim 0$. 
Therefore, $G\ne P_4$. We can take $P_1\in C$ so that $P_1, P_2:=\sigma(P_1), P_3:=G, P_4$ are distinct.
Then $2Q-P_4\sim K_{C}+P_3\sim P_1+P_2+P_3$. 

(2) 
Assume that (C1) is satisfied.
By \proref{p:M=Z2334} (1), we have $M_X=Z_X$ since $D_2=2Q-\sum _{i=1}^3 P_i\sim P_4$.
We also have
\begin{gather*}
D_3\sim 2P_4-Q \not \sim 0, \ \ 
D_4\sim 2P_4 \not \sim K_{C}, \\
D_5 \sim 3P_4-Q\sim K_{C}-P_4\sim P_4+\sigma(P_4)-P_4=\sigma(P_4), \\ 
D_7\sim 4P_4-Q\sim K_{C}.
\end{gather*}
Thus, we obtain that $(h^0(D_3), h^0(D_4), h^0(D_5), h^0(D_7))=(0,1,1,2)$.

The converse  follows from the arguments above.
\end{proof}

\begin{rem}
We take distinct points $P_4, P_5\in C\setminus \cal R(C)$ such that $P_4+P_5\not\sim K_C$ and $2(2P_4-P_5)\not\sim K_C$, and let $Q=3P_4-P_5$. 
Then $P_4$ is not a basepoint of $|2Q-P_4|$.
As in the proof of \proref{p:h3=0h7=2}, we obtain  distinct points $P_1, P_2, P_3\in C\setminus \{P_4\}$ such that $2Q-P_4\sim P_1+P_2+P_3$.
Then we have
\begin{gather*}
 D_2\sim P_4, \ \ 
h^0(D_3)=h^0(P_5-P_4) =0, \ \ 
h^0(D_4)=h^0(2P_4)=1, \\
h^0(D_5)=h^0(P_5)=1, \ \ 
h^0(D_7)=h^0(P_4+P_5)=1.
\end{gather*}
Hence there exists a singularity satisfying the conditions of  \proref{p:011} (2).
\end{rem}

\begin{prop}\label{p:0101}
Assume that $M_X=Z_X$. If $h^0(D_3)=0$, $h^0(D_4)=1$, $h^0(D_5)=0$. 
Then $\V$ is not Gorenstein, $h^0(D_7)=1$, $p_g(V,o)= 6$, $\mult(V,o)=6$, $\emb(V,o)= 7$,
\[
H(V,t)=1+t^2+t^4+2
   t^6+t^7+\cdots
=\frac{1+t^7+t^8+t^9+t^{10}+t^{11}}{\left(1-t^2\right)
   \left(1-t^6\right)}
\]  and  $\C$-algebra $R$ is generated by homogeneous elements of degree $2,6,7,8,9,10,11$.
\end{prop}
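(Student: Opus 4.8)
The plan is to determine the only remaining flexible value $h^0(D_7)$ first, and then to extract every invariant from the Hilbert series, following the pattern of \proref{p:D4=2} and \proref{p:011}. Throughout I use that $M_X=Z_X$ forces $h^0(D_2)=1$ with $D_2\sim P_4$, $P_4\in C\setminus\{P_1,P_2,P_3\}$, together with the literal divisor identities $D_4=2D_2$, $D_5=D_2+D_3$, $D_6=D_2+D_4$, and $D_7=D_4+D_3=D_2+D_5$. Since $h^0(D_4)=1$ and $D_4\sim 2P_4$, we get $2P_4\not\sim K_{C}$, hence $P_4\notin\cR(C)$. To see that $h^0(D_7)=1$ I would argue by contradiction: if $h^0(D_7)=2$ then $D_7\sim K_{C}$, so $D_3\sim K_{C}-2P_4$ and thus $D_5\sim P_4+D_3\sim K_{C}-P_4$; Riemann--Roch then gives $h^0(K_{C}-P_4)=h^0(P_4)=1$, contradicting $h^0(D_5)=0$. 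Therefore $h^0(D_7)=1$, and \lemref{l:gor} shows that $\V$ is not Gorenstein.

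Next I would compute the Hilbert series. For $n=6$ and $n\ge 8$ one has $\deg D_n\ge 2g-1=3$, so $h^0(D_n)=\deg D_n-1$; combined with the already known values this gives $H(V,t)=1+t^2+t^4+2t^6+t^7+\cdots$, which I would check coincides with the stated rational expression. Comparing with the maximal-$p_g$ series $H(\Gamma,t)$ of \eqref{eq:maxH}, the difference is $H(\Gamma,t)-H(V,t)=t^3+t^4+t^5+t^7$, so \proref{p:Hpg} (2) yields $p_g\V=p_g(\Gamma)-4=6$.

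For the multiplicity I would first locate the minimal degree $m$ of a generator other than $x\in R_2$: since $R_3=R_5=0$, $R_4=\C x^2$, and $\dim R_6=2$, there is a new generator in degree $6$ and none below, so $m=6$. As $D_6=D_2+D_4\sim 3P_4$ with $P_4\notin\cR(C)$, \lemref{l:4pts} (1) shows that $|D_6|$ is free, so $P_4$ is not a base point of $H^0(D_6)$, and \proref{p:M=Z2334} (2) gives $\mult\V=6$.

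The embedding dimension is the delicate part, since $\emb\V=7=\mult\V+1$ saturates Abhyankar's inequality \thmref{t:localring} (1); that bound by itself only yields $\emb\V\le 7$, so I must genuinely produce seven generators. Choosing $y\in R_6$ so that $\{x,y\}$ lies in a minimal generating set, $\{x,y\}$ is a homogeneous system of parameters of the two-dimensional Cohen--Macaulay ring $R$, hence a regular sequence, and the Artinian reduction has Hilbert series $H'(t)=H(V,t)(1-t^2)(1-t^6)=1+t^7+t^8+t^9+t^{10}+t^{11}$. Because $H'$ vanishes in degrees $1$ through $6$, no element of $R/(x,y)$ in degrees $7,\dots,11$ can be a product of positive-degree elements, so each of these one-dimensional pieces forces a generator; hence $R$ is minimally generated in degrees $2,6,7,8,9,10,11$ and $\emb\V=7$. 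As a consistency check, $H(V(x),t)=H(V,t)(1-t^2)$ gives $\Gamma_x=\gen{6,7,8,9,10,11}$, so \lemref{l:edC} independently bounds $\emb\V\le 7$.
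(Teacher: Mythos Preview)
Your proof is correct and follows the same overall architecture as the paper (Hilbert series, \proref{p:M=Z2334} (2) for the multiplicity, Artinian reduction $H'(t)$ for the generators), but you settle the key value $h^0(D_7)$ by a different route. The paper argues by contradiction at the level of Hilbert series: assuming $h^0(D_7)=2$ it computes $H'(t)=H(V,t)(1-t^2)(1-t^6)=1+2t^7+t^8+t^{10}+t^{11}-t^{13}+t^{15}$, and the negative coefficient at $t^{13}$ is absurd for an Artinian quotient. You instead stay on the curve: from $D_7\sim K_C$ you deduce $D_5\sim K_C-P_4$ and invoke Riemann--Roch to get $h^0(D_5)=1$, contradicting the hypothesis directly. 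Your argument is shorter and more transparent geometrically, and it explains \emph{why} the hypothesis $h^0(D_5)=0$ is what kills $h^0(D_7)=2$; the paper's computation, on the other hand, is uniform with the earlier cases and does not require identifying the right divisor identity. Either way the remaining deductions ($p_g$, $\mult$, $\emb$, generator degrees) are identical to the paper's.
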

\begin{proof}
Since $D_4\sim 2P_4\not\sim K_C$ and $D_6\sim 3P_4$, $H^0(D_6)$ is free (cf. \lemref{l:4pts}).
Hence we have $\mult\V=6$ by \proref{p:M=Z2334} (2) and $\emb\V\le 7$ by \thmref{t:localring}.
Take a homogeneous element $y\in R_6$ such that $x$ and $y$ belong to a minimal set of homogeneous generators of $\C$-algebra $R$. Then $x,y$ form a regular sequence of $R$ and the Hilbert series of $R/(x,y)$ is $H'(t):=H(V,t)(1-t^2)(1-t^6)$.

If $h^0(D_7)=2$, then $H'(t)=1+2 t^7+t^8+t^{10}+t^{11}-t^{13}+t^{15}$ 
has a negative coefficient; it is a contradiction.
Hence we have $h^0(D_7)=1$.
Then $H(V,t)=H(\Gamma,t)-(t^3+t^4+t^5+t^7)$, 
$H'(t)=1+t^7+t^8+t^9+t^{10}+t^{11}$. 
Hence $p_g\V=p_g(\Gamma)-4$,  $\emb\V=7$  and  $\C$-algebra $R$ is generated by homogeneous elements of degree $2,6,7,8,9,10,11$.
\end{proof}

\begin{rem}
Let $P_4, P_5\in C\setminus \cR(C)$ be distinct points such that $P_4+P_5\not\sim K_C$.
Let $Q=P_4+P_5$.
Then $|2Q-P_4|$ is free because $h^0(P_4+P_5)=h^0(2P_5)=1$.
Hence there exist distinct three points $P_1, P_2, P_3\in C\setminus\{P_4\}$ such that $2Q-P_4 \sim P_1+P_2+P_3$.
Then we have 
\begin{gather*}
 h^0(D_3)=h^0(P_4-P_5)=0, \quad h^0(D_4)=h^0(2P_4)=1,  \\ 
h^0(D_5)=h^0(2P_4-P_5)<h^0(2P_4)=1, \\ 
h^0(D_7)=h^0(3P_4-P_5)<h^0(3P_4)=2.
\end{gather*}
 Therefore, we have a singularity of \proref{p:0101}.
\end{rem}

For reader's convenience, 
we provide a table of the conditions for the Pinkham-Demazure divisors $D=Q-\sum _{i=1}^3 \frac{1}{2}P_i$ which induce the singularities discussed in this subsection;
for each case, $\cR=\cR(C)$, four points $P_1, \dots, P_4\in C$ are distinct, and $P_1+P_2+P_3\sim 2Q-P_4$.

\begin{table}[h]
\renewcommand{\arraystretch}{1.2}
\[
\begin{array}{cccl}
\hline\hline
p_g & \mult & \emb & \text{Pinkham-Demazure divisor} \\
\hline
8 & 3 & 4 &  Q=2P_4, P_4\not\in \cR  \\
 8 & 4 & 4 & Q=P_4+P_5, P_4\in \cR, P_5\not\in \cR \\
 7 & 4 & 5 & Q=4P_4-2P_5, P_4\in \cR, P_5\not\in \cR \\
 8 & 5 & 5 &  Q=4P_4-K_C, P_4\not\in \cR \\
 7 & 5 & 5 & Q=3P_4-P_5, P_4\not\in \cR, P_5\not\in \cR, P_4\ne P_5, \\
&&& P_4+P_5\not\sim K_C, 2(2P_4-P_5)\not\sim K_C \\
 6 & 6 & 7 & Q=P_4+P_5, P_4\not\in \cR,  P_5\not\in \cR, P_4\ne P_5, P_4+P_5\not\sim K_C \\
\hline\hline
\end{array}
\]
\caption{\label{tab:M=ZP-D}
Singularities with $M_X=Z_X$ and Pinkham-Demazure divisors}
\end{table}

\begin{rem}
Taking a general Pinkham-Demazure divisor $D=Q-\sum _{i=1}^3 \frac{1}{2}P_i$, we have a singularity $\V\in  \overline{\cal X}(\Gamma)$ with $H(V,t)=1+ t^4 +2 t^6+ t^7+\cdots$ and that $p_g\V=5$.
Recall that $p_a\V=5$ (see \sref{ss:BCI2334}).
Therefore, we have the equality $p_a(V,o)=\min \defset{p_g(W,o)}{(W,o)\in \cal X(\Gamma)}$, and this is realized by a weighted homogeneous singularity (cf. \thmref{t:TW}).
\end{rem}



\providecommand{\bysame}{\leavevmode\hbox to3em{\hrulefill}\thinspace}
\providecommand{\MR}{\relax\ifhmode\unskip\space\fi MR }
\providecommand{\MRhref}[2]{%
  \href{http://www.ams.org/mathscinet-getitem?mr=#1}{#2}
}
\providecommand{\href}[2]{#2}

\end{document}